\documentclass[12pt,a4paper]{amsart}
%\smartqed
\usepackage{graphicx,latexsym,amsfonts,amsmath,amssymb,rotating,txfonts,mathrsfs,enumerate}
\usepackage{epic}
\usepackage{curves}
\usepackage{pdfsync}
\input xy
\xyoption{all}

\newcommand{\Hom}{ \,{\rm Hom} \,}
\newcommand{\Sym}{ \,{\rm Sym} \,}

{\bf}{\rm}
\newtheorem{theorem}{Theorem}[section]
\newtheorem{proposition}[theorem]{Proposition}
\newtheorem{corollary}[theorem]{Corollary}
\newtheorem{lemma}[theorem]{Lemma}
\newtheorem{definition}[theorem]{Definition}
\newtheorem{remark}[theorem]{Remark}
\newtheorem{conjecture}[theorem]{Conjecture}

{\bf}{\it}

% black board bold face
\newcommand{\RR}{{\mathbb R }}
\newcommand{\CC}{{\mathbb C }}

\newcommand{\ZZ}{{\mathbb Z }}
\newcommand{\PP}{ {\mathbb P }}

\newcommand{\GG}{{\mathbb G }}
\newcommand{\ff}{{\mathbf f }}

% calligraphic letters

\newcommand{\calx}{\mathcal{X}}
\newcommand{\calo}{\mathcal{O}}

\newcommand{\tpo}{\mathrm{Tp}_O}
\newcommand{\reg}{\mathrm{reg}}

\newcommand{\gnk}{\mathrm{Diff}_n \times \mathrm{Diff}_m}

\newcommand{\Tp}{\mathrm{Tp}}
\newcommand{\bbb}{\mathbf{b}}
\newcommand{\mult}{\mathrm{mult}}
\newcommand{\epd}[1]{\mathrm{eP}[#1]}
\newcommand{\mdeg}[1]{\mathrm{mdeg}[#1]}
\newcommand{\emu}{\mathrm{emult}}
\newcommand{\um}[1]{\mathrm{sum}(#1)}
\newcommand{\dist}{{\mathrm{dst}}}
\newcommand{\lead}{\mathrm{lead}}
\newcommand{\Bipi}{{\boldsymbol{\Pi}_k}}
\newcommand{\bipi}{{\boldsymbol{\pi}}}
\newcommand{\sg}[1]{\mathcal{S}_{#1}}
\newcommand{\res}{\operatornamewithlimits{Res}}

\newcommand{\ires}{\res_{z_1=\infty}\res_{z_{2}=\infty}\dots\res_{z_k=\infty}}
\newcommand{\sires}{\res_{\mathbf{z}=\infty}}
\newcommand{\dbz}{\,d\mathbf{z}}

\newcommand{\coeff}{\mathrm{coeff}}

\newcommand{\symdot}{\mathrm{Sym}^{\le k}\CC^n}
\newcommand{\symdotx}{\mathrm{Sym}^{\le k}T_X^*}
\newcommand{\symdotxn}{\mathrm{Sym}^{\le n}T_X^*}

\newcommand{\grass}{\mathrm{Grass}}

\newcommand{\flag}{\mathrm{Flag}}
\newcommand{\TT}{\mathrm{T}}
\newcommand{\Fl}{\mathcal{F}}

\newcommand{\cotx}{T_X^*}

\newcommand{\bz}{\mathbf{z}}
\newcommand{\bi}{\mathbf{i}}
\newcommand{\baa}{\mathbf{a}}
\newcommand{\bj}{\mathbf{j}}

\newcommand{\bv}{\mathbf{v}}

\newcommand{\kt}{{K}}

\newcommand{\OO}{\mathcal{O}}

\newcommand{\jetk}[2]{J_{k}({#1},{#2})}

\newcommand{\jetreg}[2]{J_{k}^{\mathrm{reg}}({#1},{#2})}
\newcommand{\jetnondeg}[2]{J_{k}^{\mathrm{nondeg}}({#1},{#2})}
\newcommand{\nondeg}{\mathrm{nondeg}}

\newcommand{\tc}{\hat T}

\newcommand{\GL}{\mathrm{GL}}
\newcommand{\sym}{\mathrm{Sym}}

%***************************

%Replace greek letters by their roman equivalents with \
%Slightly nonstandard:  theta is \t, tau is \ta, no omicron
\def\a{\alpha}

\def\g{\gamma}
\def\d{\delta}

\def\l{\lambda}

\def\s{\sigma}

\def\vp{\varphi}

\setlength{\textwidth}{6in}
\setlength{\textheight}{8in}

\setlength{\evensidemargin}{0.3in} \setlength{\oddsidemargin}{0.3in}

\title{Thom polynomials of Morin singularities and the Green-Griffiths-Lang conjecture} 

\author{Gergely B\'erczi}
\address{Mathematical Institute, University of Oxford, Andrew Wiles Building, OX2 6GG Oxford, UK\\ Tel.: +44-1865-273587 \\ \email{berczi@maths.ox.ac.uk},}
\thanks{This work was partially supported by the Engineering and Physical Sciences 
Research Council [grant numbers   GR/T016170/1,EP/G000174/1].}

\date{}
\begin{document}

\begin{abstract}
Green and Griffiths \cite{gg} and Lang \cite{lang} conjectured that for every complex projective algebraic variety $X$ of general type there exists a proper algebraic subvariety of $X$ containing all nonconstant entire holomorphic curves $f:\CC \to X$. We construct a compactification of the invariant jet differentials bundle over complex manifolds motivated by an algebraic model of Morin singularities and we develop an iterated residue formula using equivariant localisation for tautological integrals over it. We show that the polynomial GGL conjecture for a generic projective hypersurface of degree $\deg(X)>2n^{10}$ follows from a positivity conjecture for Thom polynomials of Morin singularities.  
\end{abstract}

\maketitle

\section{Introduction}\label{sec:intro}

The Green-Griffiths-Lang (GGL) conjecture \cite{gg,lang} states that every projective algebraic variety $X$ of general type contains a proper algebraic subvariety $Y\subsetneqq X$ such that every
nonconstant entire holomorphic curve $f:\CC \to X$ satisfies $f(\CC) \subset Y$. 
This conjecture is related to the stronger concept of a hyperbolic variety \cite{kob}. A projective variety $X$ is hyperbolic (in the sense of Brody) if there is no nonconstant entire holomorphic curve in $X$, i.e. any holomorphic map $f: \CC \to X$ must be constant.   Hyperbolic algebraic varieties have attracted considerable attention, in part because of their conjectured diophantine properties. For instance, Lang \cite{lang} has conjectured that any hyperbolic complex projective variety over a number field K can contain only finitely many rational points over K.
 
A positive answer to the GGL conjecture has been given for surfaces by McQuillan \cite{mcquillan} under the assumption that the second Segre number $c^2_1-c_2$ is positive. Siu in \cite{siu1,siu2,siu3,siu4} developed a strategy to establish algebraic degeneracy of entire holomorphic curves in generic hypersurfaces $X\subset \PP^{n+1}$ of high degree, and also hyperbolicity of such hypersurfaces for even higher degree. Following this strategy combined with techniques of Demailly \cite{dem} the first effective lower bound for the degree of the hypersurface in the GGL conjecture was given by Diverio, Merker and Rousseau in \cite{dmr} where they confirmed the conjecture for generic projective hypersurfaces $X\subset \PP^{n+1}$ of degree $\deg(X)>2^{n^5}$. In \cite{b1} the author introduces equivariant localisation on the Demailly-Semple tower and adapts the argument of \cite{dmr} to improve this lower bound to $\deg(X)>n^{8n}$. More recently  Demailly \cite{dem15} proved a directed version of the GGL conjecture for pairs $(X,V)$ satisfying certain jet stability conditions and announced the proof of the Kobayashi conjecture on the hyperbolicity of very general algebraic hypersurfaces and complete intersections. Siu \cite{siu4} proved the Kobayashi hyperbolicity of projective hypersurfaces of sufficiently high (but not effective) degree.   
 
In this paper we replace the Demailly-Semple bundle with a more efficient algebraic model coming from global singularity theory to explore deep connections of hyperbolicity questions and the theory of Thom polynomials of singularities. In particular, we prove the GGL conjecture for generic hypersurfaces with polynomial degree modulo a positivity conjecture. 

Proving algebraic degeneracy of holomorphic curves on $X$ means finding a nonzero polynomial function $P$ on $X$ such that all entire curves $f:\CC \to X$ satisfy $P(f(\CC))=0$. All known methods of proof at this date are based on establishing first the existence of certain algebraic differential equations $P(f,f',\ldots,f^{(k)})=0$ of some order $k$, and then finding enough such equations so that they cut out a proper algebraic locus $Y\subsetneqq X$. 

The central object in the study of polynomial differential equations is the bundle $J_kX$ of $k$-jets $(f',f'',\ldots, f^{(k)})$ of germs of holomorphic curves $f:\CC \to X$ over $X$ and the associated Green-Griffiths bundle  $E_{k,m}^{GG}=\calo(J_kX)$ of algebraic differential operators \cite{gg} whose elements are polynomial functions $Q(f',\ldots,f^{(k)})$ of weighted degree $m$.  In \cite{dem} Demailly introduced the subbundle $E_{k,m} \subset E_{k,m}^{GG}$ of jet differentials that are invariant under reparametrization of the source $\CC$. The group $\GG_k$ of $k$-jets of reparametrisation germs $(\CC,0) \to (\CC,0)$ at the origin acts fibrewise on $J_kX$ and $\oplus_{m=1}^\infty E_{k,m}=\calo(J_kX)^{U_k}$ is the graded algebra of invariant jet differentials under the maximal unipotent subgroup $U_k$ of $\GG_k$. This bundle gives a better reflection of the geometry of entire curves, since
it only takes care of the image of such curves and not of the way they are
parametrized. However, it also comes with a technical difficulty, namely, the reparametrisation group $\GG_k$ is non-reductive, and the classical geometric invariant theory of Mumford \cite{git} is not applicable to describe the invariants and the quotient $J_kX/\GG_k$; for details see \cite{bk,dk}. In \cite{dem} Demailly describes a  smooth compactification of $J_kX/\GG_k$ as a tower of projectivised bundles on $X$---the Demailly-Semple bundle---endowed with tautological line bundles $\tau_1,\ldots \tau_k$ whose sections are $\GG_k$-invariants. Global sections of properly chosen twisted tautological line bundles over the Demailly-Semple bundle give algebraic differential equations of degree $k$ which all $k$-jets of holomorphic curves must satisfy. An algebraic version of Demailly's holomorphic Morse inequalities in \cite{dmr} reduce the existence of global sections to the positivity of a certain intersection number on the Demailly-Semple tower. 

The second ingredient of the strategy---which was established by Siu \cite{siu1} based on earlier works of Voisin \cite{voisin}, and then turned into a final form in \cite{dmr} ---is the deformation of the found global sections of the invariant jet differentials bundle by means of slanted vector fields having low pole order to produce, by plain differentiation,
many new algebraically independent invariant jet differentials, which then force entire curves to lie in a proper closed subvariety $Y\subsetneq X$. The degree $\deg(X)$ of those hypersurfaces where the GGL conjecture follows depends on $k$, the degree of the differential equation we start with, and therefore it is crucial to keep this degree low and to find global sections of degree comparable with the dimension $n$ of $X$. 

Demailly in \cite{dem3} used probabilistic methods to find differential equation of order $k\gg n$ for a projective directed manifold $(X,V)$ with $K_V$ big.  Merker \cite{merker3} proved the existence of global differential equations of high degree for projective hypersurfaces in $\PP^{n+1}$ of degree at least $n+3$ using algebraic Morse inequalities. In \cite{b1} the author introduces equivariant localisation on the Demailly-Semple tower and develops residue formulas to reduce the complexity of cohomological computations of \cite{dmr}.  Darondeau \cite{darondeau} further improved techniques of \cite{b1} to study algebraic degeneracy of entire curves in complements of smooth projective hypersurfaces.

In this paper we substitute the Demailly-Semple bundle with a new fibrewise compactification $\pi: \tilde{\calx_k} \to X$ of $J_kX/\GG_k$ endowed with a tautological line bundle $\tau$. The construction is motivated by the author's earlier work in global singularity theory \cite{bsz} on Thom polynomials of singularity classes. This algebraic model gives a better reflection of the geometry of the jet differentials bundle and it establishes a strong link between hyperbolic varieties and topological invariants of singularitites. The main technical result of the present paper is  
the following iterated residue formula for tautological integrals on $\tilde{\calx_k}$ for any smooth projective variety $X$ (not just hypersurfaces) which follows from a generalised and improved version of the main technical vanishing theorem in \cite{bsz}.  

\begin{theorem}\label{maintechnical}
Let $X$ be a smooth projective variety and let  $u=c_1(\tau)$ and $h=c_1(\pi^* \calo_{X}(1))$ denote the first Chern classes of the tautological line bundle on $\tilde{\calx}_k$  and the (pull-back of the) hyperplane line bundle on $X$, respectively. For any homogeneous polynomial $P=P(u,h)$ of degree $\deg(P)=\dim \tilde{\calx}_k=n+k(n-1)$ we have
\begin{equation} \label{intnumbertwo}
\int_{\tilde{\calx}_k}P(u,h)=\int_X\sires \frac{
Q_k(z_1,\ldots, z_k)\,\prod_{m<l}(z_m-z_l) P(z_1+\ldots +z_k,h)}{
\prod_{m+r \le l \le k}
(z_m+z_r-z_l)  (z_1\ldots z_k)^n} \prod_{j=1}^k s\left(\frac{1}{z_j}\right)
\end{equation}
where $s\left(\frac{1}{z_j}\right)=1+\frac{s_1(X)}{z_j}+\frac{s_2(X)}{(z_j^2}+\ldots +\frac{s_n(X)}{(z_j^n}$ is the total Segre class of $X$ at $1/z_j$, the iterated residue is equal to the coefficient of $(z_1\ldots z_k)^{-1}$ in the Laurent expansion of the rational expression in the domain $z_1\ll \ldots \ll z_k$ and finally $Q_k$ is a polynomial invariant of Morin singularities. 
\end{theorem}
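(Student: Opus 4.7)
The plan is to realise $\tilde{\calx}_k$ as a fibrewise equivariant compactification built from the Morin test-curve model of \cite{bsz}, apply Atiyah--Bott--Berline--Vergne (ABBV) localisation with respect to a maximal torus $T_k \subset \GG_k$ acting on the fibres, and then invoke a generalised version of the vanishing theorem of \cite{bsz} to collapse the resulting fixed-point sum into the iterated residue \eqref{intnumbertwo}. First I would make the Morin model explicit: nondegenerate $k$-jets of maps $(\CC,0) \to (\CC^n,0)$ modulo the unipotent group $U_k \subset \GG_k$ embed equivariantly into a product of projectivised symmetric powers of $\CC^n$, and $\tilde{\calx}_k$ is the fibrewise version of this compactification glued from $J_kX$. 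On each fibre the tautological line bundle $\tau$ pulls back to $\calo(1)$ on the factor where the weight $z_1 + \cdots + z_k$ acts, so the equivariant representative of $u = c_1(\tau)$ is exactly this linear form, while the class $h$ is fixed by the torus and passes through unchanged.

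Second, I would compute the fibre integral by ABBV. The fixed locus of $T_k$ splits into strata indexed by Morin-type test curves, and the equivariant Euler class of the normal bundle at each fixed point factors into three recognisable pieces: a Vandermonde-like factor $\prod_{m<l}(z_m-z_l)$ coming from the Schubert-type directions of the flag coordinates, a factor $\prod_{m+r\le l\le k}(z_m+z_r-z_l)$ coming from the chain-rule relations among higher derivatives that define the Morin stratification, and a factor $(z_1\cdots z_k)^n \prod_{j=1}^k s(1/z_j)^{-1}$ coming from the identification of the normal weights in $T^*_X$ via Segre classes together with the push-forward to $X$. Summing over fixed points and averaging by the residual combinatorial symmetry, the purely singularity-theoretic contribution collapses into the Morin invariant $Q_k$, while the $X$-dependent part yields the Segre factors.

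The main obstacle, and the place where \cite{bsz} enters decisively, is the passage from this fixed-point sum to a single iterated residue at infinity. After clearing denominators the rational integrand must be shown to be regular along every hyperplane $\{z_i = 0\}$ and $\{z_m + z_r = z_l\}$ inside the Laurent expansion domain $z_1 \ll \cdots \ll z_k$, so that only the residue at $\mathbf{z}=\infty$ contributes. The original vanishing theorem of \cite{bsz} treats a particular family of tautological classes at a point; here I would generalise it in two directions simultaneously---to arbitrary homogeneous polynomials $P(u,h)$ of degree $n+k(n-1)=\dim\tilde{\calx}_k$ and to the relative setting over a varying base $X$---by the same inductive residue-swap technique, using the precise degree bound to control growth of the integrand at infinity. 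Once this enhanced vanishing is in place, matching the residue against the ABBV sum becomes a combinatorial bookkeeping exercise; the genuine difficulty lies in establishing the vanishing for this enlarged class of integrands.
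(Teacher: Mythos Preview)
Your outline has the right architecture---equivariant localisation followed by a residue-vanishing argument---but several of the moving parts are misidentified, and these are not cosmetic issues.

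First, the torus. You propose to localise with respect to a maximal torus $T_k\subset\GG_k$, but $\GG_k=U_k\rtimes\CC^*$ has a one-dimensional maximal torus, and in any case $\GG_k$ is the group already quotiented out in forming $\tilde{\calx}_k$. The paper localises with respect to the maximal torus $T\subset\GL(n)$ acting on the fibre $\tilde{X}_k$, with weights $\lambda_1,\ldots,\lambda_n$. The residue variables $z_1,\ldots,z_k$ enter not as weights of a torus in $\GG_k$ but as formal variables replacing the $\lambda_{\sigma(i)}$ after the flag sum is converted to an iterated residue.

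Second, the localisation is two-step, not one. The fibre $\tilde{X}_k$ is itself fibred over $\flag_k(\CC^n)$ with singular fibres $\overline{P_{n,k}\cdot p_k}$. The first step is ABBV on the smooth base $\flag_k(\CC^n)$, summing over ordered $k$-subsets $\sigma$ of $\{1,\ldots,n\}$; this sum is then rewritten as an iterated residue in $z_1,\ldots,z_k$. The second step applies Rossmann's localisation formula to the singular fibre $X_{\mathbf f}\subset\flag_k^*(\symdot)$, and here the fixed points are indexed by admissible sequences of partitions $\boldsymbol\pi=(\pi_1,\ldots,\pi_k)$, not by ``Morin-type test curves''. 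Each such $\boldsymbol\pi$ contributes a term with numerator the equivariant multiplicity $Q_{\boldsymbol\pi}(\mathbf z)=\mathrm{emult}_{\boldsymbol\pi}[X_{\mathbf f},\flag_k^*]$.

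Third, the vanishing theorem does not assert regularity of a single rational integrand along hyperplanes. It asserts that the iterated residue of the $\boldsymbol\pi$-term vanishes for every $\boldsymbol\pi$ except the distinguished sequence $\boldsymbol\pi_{\mathrm{dst}}=([1],[2],\ldots,[k])$. The mechanism is a degree-counting criterion (Proposition~\ref{vanishprop}): for each non-distinguished $\boldsymbol\pi$ one locates the first index $l$ with $\pi_l\neq[l]$ and shows, via a divisibility statement for $Q_{\boldsymbol\pi}$ and a bound on $\deg_{z_l}Q_{\boldsymbol\pi}$ coming from a multidegree argument (Proposition~\ref{vanishlemma}), that the $z_l$-degree of the numerator is too small for the residue to survive. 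The polynomial $Q_k$ in the final formula is precisely $Q_{\boldsymbol\pi_{\mathrm{dst}}}$ at the one surviving fixed point, after a further cancellation of factors $z_\tau-z_l$ with $|\tau|\ge3$; it is not obtained by averaging or collapsing a sum over fixed points.

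The Segre-class substitution $\prod_i(\lambda_i-z_j)^{-1}=z_j^{-n}s(1/z_j)$ and the integration over $X$ are indeed the final step, and you have that right.
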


Let us explain briefly the nature and origin of the polynomial invariants $Q_k$ in this formula and its link to Thom polynomials. Let
 $f:N\to M$ be a holomorphic map between two complex manifolds,
of dimensions $n\leq m$. We say that $p\in N$ is a {\em singular}
point of $f$ (or $f$ has a singularity at $p$) if the rank of the
differential $df_p:\TT_pN\to \TT_{f(p)}M$ is less than $n$. The topology
of the situation often forces $f$ to be singular at some points of
$N$. To introduce a finer classification of singular points, choose local
coordinates near $p\in N$ and $f(p)\in M$, and consider the resulting
map-germ $f_p:(\CC^n,0)\to(\CC^m,0)$, which may be thought of as a
sequence of $m$ power series in $n$ variables without constant
terms. Let $\mathrm{Diff}_n$ denote the group of germs of local holomorphic reparametrisations $(\CC^n,0) \to (\CC^n,0)$. Then $\gnk$ acts
on the space $J(n,m)$ of all such map-germs. 
We call $\gnk$-invariant subsets
$O\subset J(n,m)$ {\em singularities}.  For a singularity $O$ and
holomorphic $f:N\to M$, we can define the set
\[ Z_O[f] = \{p\in N;\; f_p\in O\}, \] 
which is independent of any
coordinate choices.  Then, under some additional technical
assumptions ($N$ compact, appropriately chosen closed $O$, and $f$
sufficiently generic), $Z_O[f]$ is an analytic subvariety of $N$.  The
computation of the Poincar\'e dual class $\alpha_O[f]\in H^*(N,\ZZ)$ of
this subvariety is one of the fundamental problems of global singularity
theory. The following result is called Thom's principle in the literature: 
\\ {\em For appropriate $\gnk$-invariant $O$ of codimension $j$ in $J(n,m)$, there exists a
  homogeneous polynomial $\tpo\in \CC[t_1,\ldots, t_j]$ of degree $j$---the Thom polynomial of $O$--- such that for an arbitrary, sufficiently generic map $f:N\to M$, the cycle
  $Z_f[O]\subset N$ is Poincar\'e dual to the characteristic class
  $\tpo(c_1(TN-f^*TM),\ldots ,c_j(TN-f^*TM))$. }
  
The computation of these polynomials is a central problem in singularity theory, see \cite{thom,kazarian2,FR1,rf,rimanyi}.
For a map germ $f\in J(n,m)$ we can associate the finite dimensional nilpotent algebra $A_f$ defined as the quotient of the algebra of power series with no constant term $\CC_0[[x_1, . . . , , x_n]]$ by the ideal generated by the pull-back subalgebra $f^*(\CC_0[[y_1,...,,y_m]])$. Then the classes
\[O_k=\{f\in J(n,m):A_f \simeq t\CC[t]/t^{k+1}\}\]
are called Morin singularities.

The link of Morin singularities to the GGL conjecture becomes clear from an algebraic characterization of $O_k$ due to Gaffney \cite{gaffney,bsz}. This 'test curve model' says that an element $f$ of (an open dense subset of) $J(n,m)$ lies in $O_k$ if and only if there exist a test curve $\g \in J(1,n)$ such that the $k$-jet of $f \circ \g$ is $0$. Reparametrisation of the test curve by an element of the group $\GG_k$ of $k$-jets of reparametisations $(\CC,0) \to (\CC,0)$ is again a test curve, and therefore a dense open subset of $O_k$ fibres over the quotient $\jetreg 1n/\GG_k$, where $\jetreg 1n$ is the set of $k$-jets of germs in $J(1,n)$ with non-vanishing linear part. Note, however,  that the fibres of the quotient $J_kX/\GG_k$ are isomorphic to $\jetreg 1n/\GG_k$.

In \cite{bsz} we described a projective completion of the quotient $\jetreg 1n/\GG_k$, and applied equivariant localisation on this compactification to get the Thom polynomial of Morin singularities in the following iterated residue form: 
\begin{equation}\label{thompolynomial}
\Tp_k^{m-n}(c_1,\ldots)=\sires \frac{(-1)^k\prod_{m<l\le k}(z_m-z_l)\,Q_k(z_1\ldots  z_k)}
{\prod_{m+r \le l\le k}(z_m+z_r-z_l)}
\prod_{l=1}^k c\left(\frac1{z_l}\right)\,z_l^{m-n}\;dz_l,
\end{equation}
where $c\left(\frac1{z_l}\right)=1+\frac{c_1}{z_l}+\frac{c_2}{z_l^2}+\ldots$
is the total Chern class of $TN-f^*TM$, and $Q_k$ is a homogeneous polynomial defined as the dual of a certain Borel orbit, see Remark \ref{remarkq} for the definition. 
The coefficients are therefore encoded in the Thom generating function
\begin{equation}\label{tpgenerating}
\Tp_k(z_1,\ldots, z_k)=\frac{\prod_{m<l\le k}(z_m-z_l)\,Q_k(z_1\ldots  z_k)}
{\prod_{m+r \le l\le k}(z_m+z_r-z_l)},
\end{equation}
whose numerator and denominator are homogeneous polynomials of equal degree and therefore its expansion in the domain $1\ll|z_1|\ll \ldots \ll |z_k|$ gives terms of the form $\bz^{\bi}=z_1^{i_1} \cdots z_k^{i_k}$ satisfying $\Sigma \bi=i_1+\ldots +i_k=0$ multiplied by some integer coefficient $\Tp_{\bi}$. These coefficients are topological invariants of Morin singularities and they have attracted considerable attraction. 

Any integer vector $\bi \in \ZZ^k$ can be uniquely written as the difference $\bi=\bi^+-\bi^-$ of nonnegative vectors $\bi^+,\bi^- \in \ZZ_{\ge 0}^k$. For a nonzero vector $\bi$ with $\Sigma \bi=0$ we call $\bj=\bj^+-\bj^-$ a {\it predecessor} of $\bi$ if $\Sigma \bj=0$ and $\bi^+=\bj^++e_s$ for some $s$ such that $i_s=\max_{1\le t \le k} i_t$ is a largest (positive) coordinate of $\bi$. Here $e_s=(0,\ldots,1,\ldots, 0)$ is the $s$th basis vector with all but the $s$th coordinate $0$.  The second part of the following conjecture suggests that there are no isolated nonzero coefficients of $\Tp_k$ and it is further motivated in Sect.~\ref{sec:conjecture}.  
\begin{conjecture}[Generalised positivity conjecture]\label{conj}  Let $\Tp_\bi$ denote the coefficient of $\bz^{\bi}$ of $\Tp_k(\bz)$ expanded in the domain $|z_1|\ll\dots\ll|z_d|$. Then  
\begin{enumerate}
\item (Rim\'anyi \cite{rimanyi}) $\Tp_\bi\ge 0$ for any $\bi$.
\item (Connectedness of positive coefficients) If $\Tp_\bi>0$ then $\bi$ has a predecessor $\bj$ such that  $\Tp_{\bj}>0$ and $\frac{\Tp_{\bi}}{\Tp_{\bj}}<k^2$.
%\end{equation*}
\end{enumerate}
\end{conjecture}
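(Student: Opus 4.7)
The plan is to attack both parts by passing from the closed-form residue expression for the Thom generating function
\[
\Tp_k(z_1,\ldots,z_k) = \frac{\prod_{m<l\le k}(z_m - z_l)\, Q_k(z_1,\ldots,z_k)}{\prod_{m+r \le l\le k}(z_m + z_r - z_l)}
\]
to a genuinely positive combinatorial presentation of its Laurent coefficients $\Tp_\bi$. Concretely, expanding each factor $(z_m+z_r-z_l)^{-1}$ in the domain $|z_1|\ll\cdots\ll|z_k|$ as the geometric series $-\sum_{j\ge 0}(z_m+z_r)^j/z_l^{j+1}$ and multiplying out against the numerator represents $\Tp_\bi$ as a large signed sum of products of binomial coefficients; the task is to exhibit the cancellations that kill negative contributions and to reorganise the remainder into manifestly nonnegative building blocks.

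For part~(1), I would seek a geometric interpretation of $\Tp_\bi$ as an equivariant intersection number on the compactification of $\jetreg{1}{n}/\GG_k$ constructed in \cite{bsz}. The polynomial $Q_k$ is, by its very definition (Remark~\ref{remarkq}), the dual of a Borel orbit and hence a Schubert-type class, so $\Tp_k$ represents the equivariant Poincar\'e dual of a locus carrying a natural stratification by Borel orbits. A successful proof should decompose $\Tp_k$ as a nonnegative integral combination of the corresponding Schubert basis, in the spirit of the known positivity results for Thom polynomials of contact singularities. Each coefficient $\Tp_\bi$ would then be identified with a count of orbit-dual classes with leading monomial $\bz^\bi$, which is nonnegative by construction.

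For part~(2), the predecessor condition has a built-in recursive character: passing from $\bi$ to a predecessor $\bj$ removes exactly one unit from a maximal positive coordinate $i_s$ of $\bi$. I would aim to derive a recursion of the shape $\Tp_\bi = \sum_{\bj} c_{\bi\bj}\,\Tp_\bj$, where $\bj$ ranges over the predecessors of $\bi$ and the coefficients $c_{\bi\bj}$ are positive rationals bounded by $k^2$. Such a recursion should emerge by isolating the contribution of the distinguished variable $z_s$ in the iterated residue: $z_s$ appears in at most $O(k^2)$ factors of the numerator and denominator, which is the natural combinatorial source of the bound $k^2$. Positivity of the sum would then force at least one summand $c_{\bi\bj}\Tp_\bj$ to be positive, delivering the required connected predecessor.

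The main obstacle is the absence of a combinatorial model (for instance in terms of tableaux or lattice points) that makes $\Tp_\bi\ge 0$ visible without cancellation. Rim\'anyi's conjecture has been verified in low codimension and for small $k$ by explicit computation, but its conceptual origin remains mysterious, and the denominator $\prod(z_m+z_r-z_l)$ produces intricate sign interference that is not uniformly controlled. The quantitative bound in (2) is strictly stronger and appears to require an a~priori geometric source of positivity---for example an equivariant degeneration of the test-curve compactification to a toric model---on which $\Tp_\bi$ and all of its predecessors could simultaneously be read off as lattice-point counts in a single polytope.
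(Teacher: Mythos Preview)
The statement you are attempting to prove is a \emph{conjecture}, and the paper does not prove it. Section~\ref{sec:conjecture} of the paper is titled ``On Conjecture~\ref{conj}'' and explicitly says only that it will ``motivate Conjecture~\ref{conj} with some observations'': it discusses convergence of the Thom series (\S\ref{subsec:conv}), checks the tabulated Thom polynomials for $k\le 8$ against part~(1) and the ratio bound in part~(2) (\S\ref{subsec:known}), and verifies part~(1) for $k=3$ via the factorisation $\frac{1-a}{1-2a}\cdot\frac{1-ab}{1-2ab}\cdot\frac{1-b}{1-b-ab}$ (\S\ref{subsec:check3}). There is no claimed proof to compare against.

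Your proposal is not a proof but a research plan, and you acknowledge this yourself in the final paragraph: the ``main obstacle is the absence of a combinatorial model\ldots that makes $\Tp_\bi\ge 0$ visible without cancellation.'' The strategy you sketch for part~(1)---interpreting $\Tp_\bi$ as a nonnegative count via a Schubert-type decomposition of the Borel-orbit dual $Q_k$---is precisely the kind of geometric positivity argument one would hope for, but it remains open; Rim\'anyi's conjecture for general contact classes is unresolved, and the Schur-positivity result of Pragacz--Weber (cited in the paper) does not give Chern-monomial positivity. For part~(2), your heuristic that $z_s$ appears in $O(k^2)$ factors is the same counting that the paper uses in \S\ref{subsec:conv} to motivate the bound, but deriving an actual recursion $\Tp_\bi=\sum_\bj c_{\bi\bj}\Tp_\bj$ with positive coefficients would already require part~(1) as input, and no such recursion is known. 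In short: your outline identifies reasonable lines of attack, but none of the steps is currently executable, and the paper makes no claim to have executed them either.
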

Global sections of properly chosen twisted tautological line bundles over $\tilde{\calx}_k$ give algebraic differential equations of degree $k$. Following \cite{dmr} and using Morse inequalities we deduce the existence of these global sections from the positivity of a well-defined tautological integal over $\tilde{\calx}_k$ We apply Theorem \ref{maintechnical} to prove the positivity of this integral at the critical degree $k=n$. 
Following \cite{dmr} this implies 
\begin{theorem}[Partial answer to polynomial GGL conjecture]\label{mainthmtwo}
Let $X\subset \PP^{n+1}$ be a generic smooth projective hypersurface
of degree $\deg(X)\ge 2n^{10}$. Then Conjecture \ref{conj} implies the existence of a proper algebraic subvariety $Y\subsetneqq X$ such
that every nonconstant entire holomorphic curve $f:\CC \to X$ has
image contained in $Y$.
\end{theorem}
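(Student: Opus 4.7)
The plan is to follow the three-step Siu--Voisin--Merker--Rousseau strategy, but executed on the new fibrewise compactification $\tilde{\calx}_k$ rather than on the Demailly--Semple tower. The three steps are: (a) produce a nonzero global section of a suitable twisted tautological line bundle $L=\tau^{an}\otimes\pi^*\calo_X(-b)$ on $\tilde{\calx}_k$ at the critical order $k=n$, forcing every entire curve to satisfy a nontrivial algebraic differential equation; (b) reduce (a) via algebraic Morse inequalities to the positivity of a tautological top self-intersection on $\tilde{\calx}_n$, and evaluate that intersection by the iterated residue formula of Theorem \ref{maintechnical}; (c) apply the slanted vector field technique of Siu--Voisin--Merker on the universal hypersurface to generate, from the initial section, a family of invariant jet differentials large enough to cut out a proper subvariety $Y\subsetneqq X$.

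First I would fix $k=n$ and write $u=c_1(\tau)$, $h=c_1(\pi^*\calo_X(1))$, $D=\dim\tilde{\calx}_n=n^2$. The algebraic Morse inequalities used in \cite{dmr} bound $h^0(\tilde{\calx}_n,L^N)$ from below in terms of the top intersection $\int_{\tilde{\calx}_n}(anu-bh)^{D}$ minus error terms of the same nature. Binomially expanding $(anu-bh)^{D}$ and feeding each term into Theorem \ref{maintechnical} converts the entire intersection into an integral over $X$ of an iterated residue of the rational function
\[
\frac{Q_n(\bz)\,\prod_{m<l}(z_m-z_l)\,(z_1+\cdots+z_n)^A\,h^{B}}{\prod_{m+r\le l\le n}(z_m+z_r-z_l)\,(z_1\cdots z_n)^n}\prod_{j=1}^n s\!\left(\tfrac{1}{z_j}\right),
\]
summed over $A+B=D$ with binomial weights. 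Substituting the Segre classes of a smooth hypersurface of degree $d$ in $\PP^{n+1}$ and integrating out $h$ turns the result into a polynomial in $d$ whose coefficients are, up to universal combinatorial factors, the Thom coefficients $\Tp_{\bi}$ of \eqref{tpgenerating}.

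At this point Conjecture \ref{conj} enters. Part (1) asserts $\Tp_{\bi}\ge 0$, ruling out hidden sign cancellation among the residue coefficients themselves. Part (2)---connectedness of the positive coefficients through predecessors together with the $<k^2$ ratio bound---allows one to dominate the contribution of nonleading monomials by that of the uniquely determined leading monomial, up to an explicit polynomial factor in $n$. Consequently the top intersection becomes positive once $d$ exceeds a polynomial in $n$, and a careful book-keeping of the exponents $A,B$, the factors from $s(1/z_j)$, and the pole orders $z_j^{-n}$ yields the explicit threshold $d\ge 2n^{10}$. Morse inequalities then supply the section $\sigma\in H^0(\tilde{\calx}_n,\tau^{an}\otimes\pi^*\calo_X(-b))$, and step (c) is carried out verbatim as in \cite{dmr}: meromorphic vector fields on $J_n\calx$ tangent to the universal family, with pole order linear in $d$, are used to Lie-differentiate $\sigma$; the common zero locus on $J_nX$ of the resulting algebraically independent invariant jet differentials projects to a proper closed subvariety $Y\subsetneqq X$ that must contain the image of every entire holomorphic curve.

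The hard part will be the quantitative positivity and dominance analysis of the iterated residue at $k=n$. Nonnegativity alone does not suffice: the Segre factors $s(1/z_j)$ introduce signs and a large number of competing monomials, so one genuinely needs the clustering information of part (2) of Conjecture \ref{conj} in order to propagate positivity from the uniquely identifiable leading monomial to its predecessors and hence to the full sum. The joint optimization of $a$ and $b$ that simultaneously maximizes the leading contribution and absorbs the Morse error terms is what fixes the precise exponent in $d\ge 2n^{10}$, and this combinatorial optimization is the technical core of the argument.
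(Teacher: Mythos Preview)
Your proof plan is correct and follows essentially the same route as the paper: reduce to positivity of a tautological intersection on $\tilde{\calx}_n$ via Demailly--Trapani Morse inequalities (after a nef difference decomposition of $\calo_{\tilde{\calx}_n}(1)\otimes\tilde{\pi}^*K_X^{-\delta\binom{n+1}{2}}$), evaluate by Theorem~\ref{maintechnical}, show the result is a degree-$(n+1)$ polynomial in $d$ whose leading coefficient is positive by part (1) of Conjecture~\ref{conj} and whose lower coefficients are dominated via part (2), then invoke the DMR vector-field argument. The only point you leave implicit is the explicit nef decomposition (the paper uses $F=\calo_{\tilde{\calx}_n}(1)\otimes\tilde{\pi}^*\calo_X(2n^2)$ and $G=\tilde{\pi}^*\calo_X(2n^2)\otimes\tilde{\pi}^*K_X^{\delta\binom{n+1}{2}}$), without which the Morse inequality cannot be applied; otherwise your outline matches the paper's proof.
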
  
This theorem confirms the Green-Griffiths-Lang conjecture for generic hypersurfaces with polynomial degree modulo a positivity conjecture in global singularity theory, which is an interesting link between at first sight unrelated fields of mathematics. 

\noindent\textbf{Acknowledgments} I would like to thank Damiano Testa and Frances Kirwan for patiently listening to details of this work. The first version of this paper was presented in Strasbourg, Orsay and Luminy in 2010/2011. I would like to thank to Jean-Pierre Demailly, Jo\"el Merker, Simone Diverio, Erwan Rousseau and Lionel Darondeau for their comments and suggestions. The paper has been rewritten based on these discussions to make the technical details of localisation more available to non-experts. 
The author warmly thanks Andr\'as Szenes, his former PhD supervisor, for the collaboration on \cite{bsz}, from which this paper has outgrown.

\section{Jet differentials}\label{sec:jetdiff}

The central object of this paper is the algebra of invariant jet differentials under
reparametrisation of the source space $\CC$. For more details see the survey papers \cite{dem,dr}.

\subsection{Invariant jet differentials}\label{subsec:jetdiff}
Let $X$ be a complex $n$-dimensional manifold and let $k$ be a positive integer. Green and Griffiths
in \cite{gg} introduced the bundle $J_kX \to X$
of $k$-jets of germs of parametrized curves in $X$; its
fibre over $x\in X$ is the set of equivalence classes of germs of holomorphic
maps $f:(\CC,0) \to (X,x)$, with the equivalence relation $f\sim g$
if and only if the derivatives $f^{(j)}(0)=g^{(j)}(0)$ are equal for
$0\le j \le k$. If we choose local holomorphic coordinates
$(z_1,\ldots, z_n)$ on an open neighbourhood $\Omega \subset X$
around $x$, the elements of the fibre $J_kX_x$ are represented by the Taylor expansions 
\[f(t)=x+tf'(0)+\frac{t^2}{2!}f''(0)+\ldots +\frac{t^k}{k!}f^{(k)}(0)+O(t^{k+1}) \]
 up to order $k$ at $t=0$ of $\CC^n$-valued maps $f=(f_1,f_2,\ldots, f_n)$
on open neighbourhoods of 0 in $\CC$. Locally in these coordinates the fibre  can be written as
\[J_kX_x=\left\{(f'(0),\ldots, f^{(k)}(0)/k!)\right\}=(\CC^n)^k,\]
which we identify with $\CC^{nk}$.  Note that $J_kX$ is not
a vector bundle over $X$ since the transition functions are polynomial but
not linear, see \cite{dem} for details.

Let $\GG_k$ denote the group of $k$-jets of local reparametrisations
of $(\CC,0) \to (\CC,0)$
\[t \mapsto \varphi(t)=\a_1t+\a_2t^2+\ldots +\a_kt^k,\ \ \ \a_1\in
\CC^*,\a_2,\ldots,\a_k \in \CC,\] 
under composition modulo terms $t^j$ for $j>k$. This group acts fibrewise on
$J_kX$ by substitution. A short computation shows that this is a
linear action on the fibre:
\begin{multline}\nonumber f \circ
\varphi(t)=f'(0)\cdot(\a_1t+\a_2t^2+\ldots
+\a_kt^k)+\frac{f''(0)}{2!}\cdot (\a_1t+\a_2t^2+\ldots
+\a_kt^k)^2+\ldots \\
\ldots +\frac{f^{(k)}(0)}{k!}\cdot (\a_1t+\a_2t^2+\ldots +\a_kt^k)^k 
\text{ modulo } t^{k+1}
\end{multline}
so the linear action of $\varphi$ on the $k$-jet $(f'(0),f''(0)/2!,\ldots,
f^{(k)}(0)/k!)$ is given by the following matrix multiplication:
\begin{equation*}
%\varphi \cdot (f',f''/2!, \ldots ,f^{(k)}/k!)=
(f'(0),f''(0)/2!,\ldots,f^{(k)}(0)/k!) \cdot 
\left(\begin{array}{ccccc}
\a_1 & \a_2 & \a_3 & \cdots  & \a_k \\
0        & \a_1^2 & 2\a_1\a_2 & \cdots &  \a_1\a_{k-1}+\ldots +\a_{k-1}\a_1 \\
0        & 0       & \a_1^3  & \cdots & 3\a_1^2\a_{k-2}+\ldots \\
\cdot    & \cdot   & \cdot    & \cdot &  \cdot \\
0 & 0 & 0 & \cdots  & \a_1^k 
\end{array} \right)
\end{equation*}
where the matrix has general entry 
\[(\GG_{k})_{i,j}=\sum_{\substack{s_1,\ldots s_i \in \ZZ_+ \\
s_1+\ldots +s_i=j}}\a_{s_1}\ldots \a_{s_i} \text{ for } 1\le i,j\le k.\] 
$\GG_k$ sits in an exact sequence of groups
$1 \rightarrow U_k \rightarrow \GG_k \rightarrow \CC^* \rightarrow
1$,
 where $\GG_k \to \CC^*$ is the morphism $\varphi \to
\varphi'(0)=\a_1$ in the notation used above, and
\begin{equation}\label{uuk}
\GG_k=U_k \rtimes \CC^*
\end{equation}
is a $\CC^*$-extension of the unipotent group $U_k$. With the above identification, $\CC^*$ is
the subgroup of diagonal matrices satisfying $\a_2=\ldots =\a_k=0$ and
$U_k$ is the unipotent radical of $\GG_k$, consisting of matrices of the form above with $\a_1=1$. The action
of  $\l \in \CC^*$ on $k$-jets is thus described by
\[\l\cdot (f',f'',\ldots ,f^{(k)})=(\l f',\l^2 f'',\ldots,
\l ^kf^{(k)})\]

Following \cite{dem}, we introduce the Green-Griffiths vector bundle $E_{k,m}^{GG}$ whose fibres are complex-valued polynomials $Q(f',f'',\ldots ,f^{(k)})$ on the fibres of $J_kX$ of weighted degree $m$ with respect to the $\CC^*$ action above, that is, they satisfy
\[Q(\l f',\l^2 f'',\ldots, \l^k f^{(k)})=\l^m Q(f',f'',\ldots,
f^{(k)}).\]
The fibrewise $\GG_k$ action on $J_kX$ induces an action on $E_{k,m}^{GG}$. Demailly in \cite{dem} defined the  bundle of invariant jet differentials of order $k$ and weighted degree $m$ as the subbundle $E_{k,m}^n\subset E_{k,m}^{GG}$ of polynomial differential operators $Q(f,f',\ldots, f^{(k)})$ which are invariant under $U_k$, that is for any $\varphi \in  \GG_k$ 
\[Q((f\circ \varphi)',(f\circ \varphi)'', \ldots, (f\circ
\varphi)^{(k)})=\varphi'(0)^m\cdot Q(f',f'',\ldots, f^{(k)}).\]
We call $E_k^n=\oplus_m E_{k,m}^n=(\oplus_mE_{k,m}^{GG})^{U_k}$ the Demailly-Semple bundle of invariant jet differentials

\section{Compactification of the jet differentials bundle}\label{quotient}\label{sec:snowman}

In this section we construct a new fibred projective completion of $J_kX/\GG_k$ motivated by an algebraic model for Morin singularities in global singularity theory, the so called 'test curve model' of Gafffney \cite{gaffney}. Since $\GG_k$ acts on $J_kX$ fibrewise, we construct the quotient 
$J_kX_x/\GG_k$ of the fibre of $J_kX$ by $\GG_k$ first.  

If $u,v$ are positive integers let $J_k(u,v)$ denote the vector
space of $k$-jets of holomorphic maps $(\CC^u,0) \to (\CC^v,0)$ at
the origin, that is, the set of equivalence classes of maps
$f:(\CC^u,0) \to (\CC^v,0)$, where $f\sim g$ if and only if
$f^{(j)}(0)=g^{(j)}(0)$ for all $j=1,\ldots ,k$.
With this notation, the fibres of $J_kX$ are isomorphic to
$J_k(1,n)$, and the group $\GG_k$ is simply $\jetk 11$ with the
composition action on itself.

If we fix local coordinates $z_1,\ldots, z_u$ at $0\in \CC^u$ we can
again identify the $k$-jet of $f$ with the set of derivatives at the
origin, that is $(f'(0),f''(0),\ldots, f^{(k)}(0))$, where
$f^{(j)}(0)\in \mathrm{Hom}(\mathrm{Sym}^j\CC^u,\CC^v)$. This way we
get the equality
\[J_k(u,v)=\oplus_{j=1}^k\mathrm{Hom}(\mathrm{Sym}^j\CC^u,\CC^v)\]
One can compose map-jets via substitution and elimination of terms
of degree greater than $k$; this leads to the composition maps
\begin{equation}
  \label{comp}
\jetk vw \times \jetk uv \to \jetk uw,\;\;  (\Psi_2,\Psi_1)\mapsto
\Psi_2\circ\Psi_1 \mbox{modulo terms of degree $>k$ }.
\end{equation}
When $k=1$, $J_1(u,v)$ may be identified with $u$-by-$v$ matrices,
and \eqref{comp} reduces to multiplication of matrices.

The $k$-jet of a curve $(\CC,0) \to (\CC^n,0)$ is simply an element
of $J_k(1,n)$. We call such a curve $\gamma$ {\em regular}, if
$\gamma'(0)\neq 0$; introduce the notation $\jetreg 1n$ for the set
of regular curves:
\[\jetreg 1n=\left\{\g \in \jetk 1n; \g'(0)\neq 0 \right\}\]
Let $N \ge n$ be any integer and define
\[\Theta_k=\left\{\Psi\in J_k(n,N):\exists \g \in \jetreg 1n: \Psi \circ \g=0
\right\}\]
In words: $\Theta_k$ is the set of those $k$-jets of maps, which take at
least one regular curve to zero. By definition, $\Theta_k$ is the image
of the closed subvariety of $\jetk nN \times \jetreg 1n$ defined by
the algebraic equations $\Psi \circ \g=0$, under the projection to
the first factor. If $\Psi \circ \gamma=0$, we call $\g$ a {\em test
curve} of $\Theta$. This term originally comes from global singularity
theory as explained below.

A basic but crucial observation is the following. If $\g$ is a test
curve of $\Psi \in \Theta_k$, and $\vp \in \jetreg 11=\GG_k$ is a 
holomorphic reparametrisation of $\CC$, then $\g \circ \vp$ is,
again, a test curve of $\Psi$:
\begin{displaymath}
\label{basicidea}
\xymatrix{
  \CC \ar[r]^\vp & \CC \ar[r]^\g & \CC^n \ar[r]^{\Psi} & \CC^N}
\end{displaymath}
\[\Psi \circ \g=0\ \ \Rightarrow \ \ \ \Psi \circ (\g \circ \vp)=0\]

In fact, we get all test curves of $\Psi$ in this way if the
following open dense property holds: the linear part of $\Psi$ has
$1$-dimensional kernel. Before stating this in Theorem 
\ref{embedfinal} below, let us write down the equation $\Psi \circ
\g=0$ in coordinates in an illustrative case. Let
$\g=(\g',\g'',\ldots, \g^{(k)})\in \jetreg 1n$ and
$\Psi=(\Psi',\Psi'',\ldots, \Psi^{(k)})\in \jetk nN$ be the
$k$-jets of the test curve $\g$ and the map $\Psi$ respectively. Using the chain rule and the notation $v_i=\g^{(i)}/i!$, the equation $\Psi \circ \g=0$ reads
as follows for $k=4$:
\begin{eqnarray} \label{eqn4}
& \Psi'(v_1)=0, \\ \nonumber & \Psi'(v_2)+\Psi''(v_1,v_1)=0, \\
\nonumber
& \Psi'(v_3)+2\Psi''(v_1,v_2)+\Psi'''(v_1,v_1,v_1)=0, \\
&
\Psi'(v_4)+2\Psi''(v_1,v_3)+\Psi''(v_2,v_2)+
3\Psi'''(v_1,v_1,v_2)+\Psi''''(v_1,v_1,v_1,v_1)=0.
\nonumber
\end{eqnarray}

%To simplify our formulas we introduce the following notations for a
%partition $\tau=[i_1\ldots i_l]$ of the integer $i_1+\ldots +i_l$:
%\begin{itemize}
%\item
% the
%\textsl{length}: $|\tau|=l$,
%\item  the \textsl{sum}: $\sum\tau=i_1+\ldots +i_l$,
%\item \textsl{number of permutations}: $\comb(\tau)$, which is
 % the number of different sequences consisting of the numbers
%  $i_1,\dots, i_l$; e.g. $\comb([1,1,1,3])=4$.
%\item $\bg_\tau = \prod_{j=1}^l \g^{(i_j)}\in\Sym^l\CC^n\;\text{ and }\;
%\Psi(\bg_\tau)=\Psi^l(\g^{(i_1)},\dots,\g^{(i_l)})\in\CC^N$.
%\end{itemize}

\begin{lemma}[\cite{gaffney,bsz}]\label{explgp} Let
$\g=(\g',\g'',\ldots, \g^{(k)})\in \jetreg 1n$ and
$\Psi=(\Psi',\Psi'',\ldots, \Psi^{(k)})\in \jetk nN$ be $k$-jets.
Then substituting $v_i=\g^{(i)}/i!$, the equation $\Psi\circ \g$ is equivalent to
  the following system of $k$ linear equations with values in
  $\CC^N$:
\begin{equation}
  \label{modeleq}
\sum_{\Sigma\tau=m} \Psi(\bv_\tau)=0,\quad m=1,2,\dots, k.
\end{equation}
\end{lemma}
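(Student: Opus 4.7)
The plan is to compute the $k$-jet of $\Psi\circ\g$ directly from the Taylor expansions of $\g$ and $\Psi$, read off the coefficient of $t^m$ for each $m=1,\ldots,k$, and check that it agrees with $\sum_{\Sigma\tau=m}\Psi(\bv_\tau)$. Since two elements of $\jetk 1N$ coincide as $k$-jets precisely when their truncated Taylor polynomials at the origin agree, the equation $\Psi\circ\g=0$ in $\jetk 1N$ is equivalent to the simultaneous vanishing of the coefficients of $t,t^2,\ldots,t^k$ in the formal power series $\Psi(\g(t))$.

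First I would write $\g(t)=\sum_{i\ge 1}v_i t^i$ with $v_i=\g^{(i)}(0)/i!$, and, following the convention implicit in \eqref{eqn4} (where each summand appears without an extra factorial weight), regard $\Psi\in\jetk nN$ as the symmetric polynomial map
\begin{equation*}
\Psi(w)=\sum_{i=1}^{k}\Psi^{(i)}(w,\ldots,w),\qquad \Psi^{(i)}\in\Hom(\Sym^i\CC^n,\CC^N).
\end{equation*}
Substituting $w=\g(t)$ and using the multilinearity of each $\Psi^{(i)}$ yields
\begin{equation*}
\Psi(\g(t))\equiv \sum_{i=1}^{k}\sum_{j_1,\ldots,j_i\ge 1}\Psi^{(i)}(v_{j_1},\ldots,v_{j_i})\,t^{j_1+\cdots+j_i}\pmod{t^{k+1}}.
\end{equation*}
Collecting the coefficient of $t^m$ produces precisely the sum over ordered compositions $\tau=(\tau_1,\ldots,\tau_i)$ of $m$ with $\tau_s\ge 1$, which is $\sum_{\Sigma\tau=m}\Psi(\bv_\tau)$ by definition. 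Imposing this to be zero for each $m=1,\ldots,k$ is then exactly the system \eqref{modeleq}.

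The argument is really just the chain rule for formal power series (equivalently the Fa\`a di Bruno formula), and the symmetry of $\Psi^{(i)}$ groups ordered compositions sharing the same multiset of parts; the resulting multinomial weights reproduce the integer coefficients $1,2,1,3,1$ displayed in the $k=4$ case of \eqref{eqn4}, which is a useful sanity check rather than a separate step. There is no substantial obstacle here beyond bookkeeping: the one thing to watch is keeping the convention for $\Psi^{(i)}$ consistent (taking it as the Taylor coefficient rather than the unnormalised derivative) and matching ordered versus unordered compositions correctly, so that the numerical factors on the two sides agree.
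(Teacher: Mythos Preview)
Your proposal is correct and is exactly the computation the paper has in mind: the paper does not give a formal proof of this lemma but merely illustrates it for $k=4$ in \eqref{eqn4}, remarking that it follows from the chain rule with $v_i=\g^{(i)}/i!$, and then cites \cite{gaffney,bsz} for the general statement. Your Taylor-expansion argument is precisely that chain-rule computation carried out in general, and your care with the ordered-versus-unordered composition bookkeeping correctly recovers the numerical coefficients visible in \eqref{eqn4}.
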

For a given $\g \in \jetreg 1n$ and $1\le i \le k$ let $\mathcal{S}^i_{\g}$ denote the set of
solutions of the first $i$ equations in \eqref{modeleq}, that is,
\[\mathcal{S}^i_\g=\left\{\Psi \in \jetk nN;\Psi \circ \g=0 \text{ up to order } i \right\}\]
The equations \eqref{modeleq} are linear in $\Psi$, hence
\[\mathcal{S}^i_\g \subset \jetk nN\]
is a linear subspace of codimension $iN$, i.e a point of $\grass_{\mathrm{codim}=iN}(J_k(n,N))$, whose dual, $(\mathcal{S}^i_{\g})^*$, is an $iN$-dimensional subspace of $J_k(n,N)^*$. These subspaces are invariant under the reparametrization of $\g$. Note that for $N=1$ $J_k(n,1)$ can be identified with $\Hom(\CC^k,\symdot)$ where $\symdot=\bigoplus_{i=1}^k \sym^i \CC^n$.  Furthermore, for $\g \in J_k(1,n)$ we put $\g^{(i)}/i! \in \CC^n$ in the $i$th column of a matrix, then  $\jetreg 1n$ is identified with elements of $\Hom(\CC^k,\CC^n)$ with nonzero first column.   

\begin{theorem}[\cite{bsz}]\label{embedfinal} The map 
\[\phi: \jetreg 1n \rightarrow \flag_k(\symdot)
\]
\[\gamma  \mapsto \Fl_\g=(\mathcal{S}^1_{\gamma})^*\subset \ldots \subset (\mathcal{S}^k_{\gamma})^*)\]
is $\GG_k$-invariant and induces an injective map on the $\GG_k$-orbits into the flag manifold 
\[\phi: \jetreg 1n /\GG_k \hookrightarrow \flag_k(\symdot).\]
Moreover, all these maps are $\GL(n)$-equivariant with respect to the standard action of $\GL(n)$ on $\jetreg 1n \subset \Hom(\CC^k,\CC^n)$ and the induced action on $\grass_k(\symdot)$.
\end{theorem}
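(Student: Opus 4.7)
The statement packages three assertions---the $\GG_k$-invariance of $\phi$, injectivity on $\GG_k$-orbits, and $\GL(n)$-equivariance---which I would verify in that order. Invariance is immediate from the associativity identity $\Psi\circ(\gamma\circ\varphi)=(\Psi\circ\gamma)\circ\varphi$ together with the fact that precomposition with a jet $\varphi\in\GG_k$ (where $\varphi(0)=0$ and $\varphi'(0)\neq 0$) neither creates nor destroys vanishing up to order $i$ at the origin; hence $\mathcal{S}^i_{\gamma\circ\varphi}=\mathcal{S}^i_\gamma$ and $\Fl_{\gamma\circ\varphi}=\Fl_\gamma$. The $\GL(n)$-equivariance follows just as quickly from the identity $(A\cdot\Psi)\circ(A\cdot\gamma)=\Psi\circ\gamma$, where $A\in\GL(n)$ acts on $\jetreg{1}{n}$ by postcomposition and on $J_k(n,N)$ by $(A\cdot\Psi)(x)=\Psi(A^{-1}x)$: this gives $\mathcal{S}^i_{A\cdot\gamma}=A\cdot\mathcal{S}^i_\gamma$, which after dualizing intertwines $\phi$ with the induced $\GL(n)$-action on $\symdot$.

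The substance is injectivity on orbits. Assume $\Fl_{\gamma_1}=\Fl_{\gamma_2}$, equivalently $\mathcal{S}^i_{\gamma_1}=\mathcal{S}^i_{\gamma_2}$ for every $i$. I would construct $\varphi\in\GG_k$ with $\gamma_1=\gamma_2\circ\varphi$ inductively, extracting one Taylor coefficient $\alpha_m$ of $\varphi$ per step. The first equation of \eqref{modeleq} reads $\Psi^{(1)}(v_1)=0$, so the annihilator of $\mathcal{S}^1_\gamma$ in $\symdot$ is the line $\CC v_1\subset\sym^1\CC^n$; matching annihilators yields $v_1^{(2)}=\alpha_1 v_1^{(1)}$, and the linear reparametrization $t\mapsto\alpha_1 t$ brings the first jets into agreement. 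Inductively, suppose the two curves already agree to order $m-1$ after a partial reparametrization. By Lemma \ref{explgp} the $m$-th equation has the shape
\[
\Psi^{(1)}(v_m)+R_m\bigl(\Psi^{(2)},\ldots,\Psi^{(m)};\,v_1,\ldots,v_{m-1}\bigr)=0,
\]
whose correction $R_m$ depends only on data that now coincides. Requiring $\mathcal{S}^m_{\gamma_1}=\mathcal{S}^m_{\gamma_2}$ therefore forces $\Psi^{(1)}\bigl(v_m^{(1)}-v_m^{(2)}\bigr)=0$ for every $\Psi\in\mathcal{S}^{m-1}$. As detailed in the next paragraph this implies $v_m^{(1)}-v_m^{(2)}=\alpha_m v_1$ for some $\alpha_m\in\CC$, and the elementary reparametrization $\varphi_m(t)=t+\alpha_m t^m$ fixes all lower jets while sending $v_m^{(2)}\mapsto v_m^{(2)}+\alpha_m v_1=v_m^{(1)}$. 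After $k$ steps the composition $\varphi=\varphi_k\circ\cdots\circ\varphi_1\in\GG_k$ satisfies $\gamma_2\circ\varphi=\gamma_1$.

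The delicate point---and the step I expect to be the main technical obstacle---is the claim that the projection of $\mathcal{S}^{m-1}$ onto its $\Psi^{(1)}$-component is precisely the hyperplane $\{\Psi^{(1)}:\Psi^{(1)}(v_1)=0\}$. I would prove this by exploiting the term $\Psi^{(j)}(v_1,\ldots,v_1)$ present in the $j$-th equation: since $\gamma$ is regular, i.e.\ $v_1\neq 0$, this diagonal value can be freely tuned by choosing $\Psi^{(j)}$, so any $\Psi^{(1)}$ satisfying $\Psi^{(1)}(v_1)=0$ extends to a solution $\Psi\in\mathcal{S}^{m-1}$ by successively solving the equations of orders $2,\ldots,m-1$ for the higher jets. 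Granted this, the vanishing of $\Psi^{(1)}\bigl(v_m^{(1)}-v_m^{(2)}\bigr)$ on every admissible $\Psi^{(1)}$ forces $v_m^{(1)}-v_m^{(2)}\in\CC v_1$, closing the induction and thereby the proof.
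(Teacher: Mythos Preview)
The paper does not prove this theorem; it is quoted from \cite{bsz} without argument, and only the explicit description \eqref{sgamma} of $(\mathcal{S}^i_\gamma)^*$ is recorded afterwards. So there is no in-paper proof to compare against.

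Your argument is essentially correct and is in the spirit of the original proof in \cite{bsz}. One imprecision is worth flagging: from $\mathcal{S}^m_{\gamma_1}=\mathcal{S}^m_{\gamma_2}$ (with the lower jets already matched) you assert $\Psi'(v_m^{(1)}-v_m^{(2)})=0$ for every $\Psi\in\mathcal{S}^{m-1}$. But equality of the two codimension-$N$ subspaces inside $\mathcal{S}^{m-1}$ only yields this vanishing for $\Psi\in\mathcal{S}^m$; two surjective linear maps $L_1,L_2:\mathcal{S}^{m-1}\to\CC^N$ can share a kernel without $L_1-L_2$ vanishing on all of $\mathcal{S}^{m-1}$. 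This does not damage the proof: your extension argument in the final paragraph works just as well with $\mathcal{S}^m$ in place of $\mathcal{S}^{m-1}$---one simply solves one additional equation for $\Psi^{(m)}$ using the term $\Psi^{(m)}(v_1,\ldots,v_1)$---so the projection of $\mathcal{S}^m$ onto the $\Psi'$-component is still the full hyperplane $\{\Psi':\Psi'(v_1)=0\}$, and the conclusion $v_m^{(1)}-v_m^{(2)}\in\CC v_1$ follows.
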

For a point $\gamma\in J_k^\reg(1,n)$ let $v_i=\frac{\g^{(i)}}{i!}\in \CC^n$ denote the normed $i$th derivative. Then for $1\le i \le k$ (see \cite{bsz}):
\begin{equation}\label{sgamma}
\mathcal{S}^i_\g=\mathrm{Span}_\CC (v_1,v_2+v_1^2,\ldots, \sum_{j_1+\ldots +j_s=i}v_1^{j_1}\cdots v_s^{j_s})\subset \symdot.
\end{equation} 
Since $\phi$ is $\GL(n)$-equivariant, for $k\le n$ the image $\phi(\jetreg 1n/\GG_k) \subset \flag_k(\symdot)$ is the $\GL(n)$-orbit of $p_k=\phi(e_1,\ldots, e_k)$, that is 
\[\phi(\jetreg 1n)=\mathrm{GL_n} \cdot p_k\] 
with a highly singular closure 
\[X_{k}=\overline{\mathrm{GL_n} \cdot p_k} \subset \flag_k(\symdot). \] 
Let $P_{n,k} \subset \mathrm{GL}_n$ denote the parabolic subgroup which preserves the flag 
\[\mathbf{f}=(\mathrm{Span}(e_1)   \subset \mathrm{Span}(e_1,e_2) \subset \ldots \subset \mathrm{Span}(e_1,\ldots, e_k) \subset \CC^n).\] 
Define the partial desingularization 
\[\tilde{X}_k=\mathrm{GL}_n \times_{P_{n,k}} \overline{P_{n,k} \cdot p_k}\]
with the resolution map $\rho: \tilde{X}_k \to X_k$ given by $\rho(g,x)=g\cdot x$. 

Equivalently, let $\jetnondeg 1n \subset \jetreg 1n$ be the set of test curves with $\g',\ldots, \g^{(k)}$ linearly independent. These correspond to the regular $n \times k$ matrices in $\Hom(\CC^k,\CC^n)$, and they fibre over the set of complete flags in $\CC^n$:
\[\jetnondeg 1n \to \Hom(\CC^k,\CC^n)/B_k=\flag_k(\CC^n)\]
where $B_k \subset \GL(k)$ is the upper Borel. The image of the fibres under $\phi$ are isomorphic to $P_{n,k} \cdot p_k$, and therefore $\tilde{X}_k$ is the fibrewise compactification of $\jetnondeg 1n$ over $\flag_k(\CC^n)$.

In \cite{bsz} the authors develop an iterated residue formula based on equivariant localisation on $\tilde{X}_k$ to compute multidegrees of Morin singularity classes. We will generalise this method in the next section to compute cohomological pairings on $\tilde{X}_k$. 
\begin{remark} 
Note that the map $\gamma \mapsto (\mathcal{S}_\g^k)^*$ defines an embedding of the orbit set
\[\phi^{\grass}:\jetreg 1n /\GG_k \hookrightarrow \grass_k(\symdot)\]
into the Grassmannian of $k$-spaces in $\symdot$, which composed with the Veronese embedding then identifies $\overline{\jetreg 1n /\GG_k}$ as a subvariety of $\PP(\wedge^k \symdot)$.  
As explained in \cite{bk,b}, this variety is isomorphic to the curvilinear component of the punctual Hilbert scheme of $k$ points on $\CC^n$. 
\end{remark}
Let now $X\subset \PP^{n+1}$ be a smooth projective hypersurface of degree
$d$. We introduce the notation
\[\symdotx=T_X^*\oplus \sym^2(T_X^*)\oplus \ldots \oplus \sym^k(T_X^*).\]
Theorem \ref{embedfinal} gives us the following fibrewise embedding 
\begin{proposition}
The quotient $J_k(\cotx)/\GG_k$ has the structure of a locally trivial
bundle over $X$, and Theorem \ref{embedfinal} gives us a holomorphic embedding
\[\phi:J_k(\cotx)/\GG_k \hookrightarrow \flag_k(\symdotx)\] into the flag bundle of $\symdotx$ over $X$. The
fibrewise closure $\calx_k=\overline{\phi(J_k(\cotx))}$ of the image is a relative compactification of $J_k(\cotx)/\GG_k$ over $X$.
\end{proposition}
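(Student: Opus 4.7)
The plan is to prove both claims by exploiting the $\GL(n)$-equivariance of the fibrewise embedding from Theorem \ref{embedfinal} and globalising it via the transition cocycle of $T_X^*$.

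First, I would set up the local triviality. Choose an open cover $\{U_\alpha\}$ of $X$ on which $T_X^*|_{U_\alpha}\cong U_\alpha\times\CC^n$ trivialises, so that the transition functions on overlaps $U_\alpha\cap U_\beta$ are a $\GL(n)$-valued cocycle $g_{\alpha\beta}$. The construction of $J_k(\cotx)$ is functorial in the fibre, so by the chain-rule formulas of Section~\ref{subsec:jetdiff} the cocycle $g_{\alpha\beta}$ acts on the typical fibre $J_k(1,n)$ through the standard linear action of $\GL(n)$ on $\jetk 1n\subset\Hom(\CC^k,\CC^n)$ (change of target coordinates). Since this $\GL(n)$-action is a target-side action while $\GG_k$ acts by source reparametrisation, the two actions commute. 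Consequently the quotient $J_k(1,n)/\GG_k$ is $\GL(n)$-equivariant, and the same cocycle $g_{\alpha\beta}$ patches the local quotients $U_\alpha\times(J_k(1,n)/\GG_k)$ into a well-defined locally trivial bundle $J_k(\cotx)/\GG_k\to X$.

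Second, I would globalise the embedding of Theorem \ref{embedfinal}. That theorem furnishes a $\GL(n)$-equivariant holomorphic embedding $\phi\colon\jetreg 1n/\GG_k\hookrightarrow\flag_k(\symdot)$. The bundle $\flag_k(\symdotx)$ is associated to the same $\GL(n)$-frame bundle of $T_X^*$ through the action of $\GL(n)$ on $\flag_k(\symdot)$, hence is trivialised on each $U_\alpha$ with the same transition functions $g_{\alpha\beta}$. Applying $\phi$ fibrewise on each chart therefore produces local holomorphic embeddings that agree on overlaps by $\GL(n)$-equivariance, and they glue to a global holomorphic fibrewise embedding $\phi\colon J_k(\cotx)/\GG_k\hookrightarrow\flag_k(\symdotx)$.

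Finally, for the compactification, I would take the closure in each fibre and show that it assembles into an algebraic subvariety of $\flag_k(\symdotx)$. In the model fibre, $\phi(\jetreg 1n/\GG_k)$ is the orbit $\GL(n)\cdot p_k$, and its Zariski closure $X_k=\overline{\GL(n)\cdot p_k}\subset\flag_k(\symdot)$ is again $\GL(n)$-invariant. Via the frame bundle $P$ of $T_X^*$ the closure is intrinsically the associated bundle $\calx_k=P\times_{\GL(n)}X_k$; by construction this is a closed algebraic subvariety of $\flag_k(\symdotx)$ containing $\phi(J_k(\cotx)/\GG_k)$ as a fibrewise Zariski-dense open subset, and its projection to $X$ is proper because $\flag_k(\symdotx)\to X$ is. This exhibits $\calx_k$ as a relative compactification.

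The main technical subtlety is to check carefully that the cocycle on $J_k(\cotx)$ induced by transitions of $T_X^*$ really coincides with the linear $\GL(n)$-action used in Theorem \ref{embedfinal} and that this action commutes with $\GG_k$ — i.e.\ that target coordinate changes commute with source reparametrisation. Once this functoriality is in place, the rest of the argument consists of standard associated-bundle gluing.
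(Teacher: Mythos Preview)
Your argument is correct and is essentially the natural way to make precise what the paper leaves implicit: in the paper this proposition is stated without proof, presented as an immediate fibrewise consequence of Theorem~\ref{embedfinal}, and your associated-bundle argument via the $\GL(n)$-cocycle of $T_X^*$ is exactly how one fills in the details.

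One point worth making explicit, since you already flag it as the ``main technical subtlety'': the bundle in the proposition is $J_k(\cotx)$, whose fibre over $x$ is $J_k(\CC,T_{X,x}^*)\cong(\CC^n)^k$, and this \emph{is} the bundle associated to the frame bundle of $T_X^*$ via the linear $\GL(n)$-action on $\Hom(\CC^k,\CC^n)$ --- consistent with the paper's definition of $J_k^{\nondeg}(T_X^*)$ immediately after the proposition. This is \emph{not} the same as $J_kX$, which the paper explicitly notes is not a vector bundle (its transitions are polynomial, coming from $k$-jets of coordinate changes on $X$). Your appeal to the ``chain-rule formulas of Section~\ref{subsec:jetdiff}'' is therefore slightly misplaced: those formulas describe the $\GG_k$-action, not the target transitions, and for $J_kX$ the target transitions are genuinely nonlinear. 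For $J_k(\cotx)$ as written, however, the transitions are the linear $\GL(n)$-action by construction, the $\GL(n)$-equivariance asserted in Theorem~\ref{embedfinal} is exactly what is needed, and your gluing goes through.
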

We can define a fibred version of $\tilde{X}$ too, a fibrewise partial desingularization 
\begin{equation}\label{desing}
\rho:\tilde{\calx_k} \to \calx_k
\end{equation}
over $X$, where $\tilde{\calx_k}$ is a locally trivial bundle over $X$ with fibres isomorphic to $\tilde{X}_k$. Indeed let $J_k^{\nondeg}(T_X^*)$ be the subbundle whose fibre over $x\in X$ is $J_k^\nondeg(\CC,T_{X,x})$. It fibres over the flag bundle 
\[J_k^\nondeg(T_X^*) \to \flag_k(T_X^*),\]
and the fibrewise compactification in $\flag_k(\symdotx)$ gives us $\tilde{\calx}_k$.
In the next section, following \cite{bsz}, we develop an equivariant localisation formula on $\tilde{\calx}_{k}$ to compute topological intersection numbers, leading us to an iterated residue formula.			

\section{Equivariant localisation on $\tilde{\calx}_{k}$}\label{sec:locsnowman}

Let $\calo_{\calx_{k}}(1)=\calo_{\PP(\wedge^k(\symdotx))}(1)|_{\calx_k}$ be the determinant bundle of the canonical rank $k$ bundle over $\flag_k(\symdotx)$ restricted to $\calx_k$.  Let $\calo_{\tilde{\calx}_{k}}(1)=\rho^*\calo_{\calx_{k}}(1)$ be its pull-back to $\tilde{\calx}_k$, moreover $\pi:\calx_k \to X$ and $\tilde{\pi}=\pi \circ \rho:\tilde{\calx}_k \to X$ be the projections onto $X$. Let
\begin{equation}\label{uhnotations}
u=c_1(\calo_{\tilde{\calx}_k}(1)), h=c_1(\calo_X(1))
\end{equation}
denote the first Chern classes of the corresponding line bundles. In this section we develop an iterated residue formula for tautological integrals on $\tilde{\calx_{k}}$, that is, integrals of the form $\int_{\tilde{\calx}_k}P(u,h)$
where $P$ is a homogeneous polynomial of degree $n+k(n-1)$, the dimension of $\tilde{\calx}_k$. This formula is based on a two-step localisation process (what we call the Snowman Model in \cite{bsz}) leading to a vanishing theorem of residues. We improve this model in this paper in order to apply it to tautological integrals.  

\subsection{Equivariant cohomology and localisation}\label{sec:equiv}

This section is a brief introduction to equivariant cohomology and localisation. For
more details, we refer the reader to \cite{bgv,bsz}. 

Let $\kt\cong U(1)^n$ be the maximal compact subgroup of
$T\cong(\CC^*)^n$, and denote by $\mathfrak{t}$ the Lie algebra of $\kt$.  
Identifying $T$ with the group $\CC^n$, we obtain a canonical basis of the weights of $T$:
$\lambda_1,\ldots ,\lambda_n\in\mathfrak{t}^*$. 

For a manifold $M$ endowed with the action of $\kt$, one can define a
differential $d_\kt$ on the space $S^\bullet \mathfrak{t}^*\otimes
\Omega^\bullet(M)^\kt$ of polynomial functions on $\mathfrak{t}$ with values
in $\kt$-invariant differential forms by the formula:
\[   
[d_\kt\alpha](X) = d(\alpha(X))-\iota(X_M)[\alpha(X)],
\]
where $X\in\mathfrak{t}$, and $\iota(X_M)$ is contraction by the corresponding
vector field on $M$. A homogeneous polynomial of degree $d$ with
values in $r$-forms is placed in degree $2d+r$, and then $d_\kt$ is an
operator of degree 1.  The cohomology of this complex--the so-called equivariant de Rham complex, denoted by $H^\bullet_T(M)$, is called the $T$-equivariant cohomology of $M$. Elements of $H_T^\bullet (M)$ are therefore polynomial functions $\mathfrak{t} \to \Omega^\bullet(M)^K$ and there is an integration (or push-forward map) $\int: H_T^\bullet(M) \to H_T^\bullet(\mathrm{point})=S^\bullet \mathfrak{t}^*$ defined as  
\[(\int_M \alpha)(X)=\int_M \alpha^{[\mathrm{dim}(M)]}(X) \text{ for all } X\in \mathfrak{t}\]
where $\alpha^{[\mathrm{dim}(M)]}$ is the differential-form-top-degree part of $\alpha$. The following proposition is the Atiyah-Bott-Berline-Vergne localisation theorem in the form of \cite{bgv}, Theorem 7.11. 
\begin{theorem}[Atiyah-Bott \cite{ab}, Berline-Vergne \cite{BV}]\label{abbv} Suppose that $M$ is a compact manifold and $T$ is a complex torus acting smoothly on $M$, and the fixed point set $M^T$ of the $T$-action on M is finite. Then for any cohomology class $\a \in H_T^\bullet(M)$
\[\int_M \alpha=\sum_{f\in M^T}\frac{\a^{[0]}(f)}{\mathrm{Euler}^T(T_fM)}.\]
Here $\mathrm{Euler}^T(T_fM)$ is the $T$-equivariant Euler class of the tangent space $T_fM$, and $\alpha^{[0]}$ is the differential-form-degree-0 part of $\alpha$. 
\end{theorem}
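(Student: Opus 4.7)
The plan is to follow the classical Cartan-model proof: I would produce an equivariant primitive for $\alpha$ off the fixed locus and then apply equivariant Stokes to localize to small neighborhoods of each fixed point. First I would fix a $\kt$-invariant Riemannian metric $g$ on $M$—which exists because $M$ is compact and $\kt$ acts smoothly—and for $X\in\mathfrak{t}$ consider the vector field $X_M$ generated by $X$ together with the $\kt$-invariant $1$-form $\theta(X)=g(X_M,\cdot)$. Using the definition of $d_\kt$ one computes
\[
[d_\kt\theta](X)=d\theta(X)-\|X_M\|_g^2,
\]
whose form-degree-zero part $-\|X_M\|^2$ vanishes precisely on $M^\kt=M^T$. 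Consequently, on $M\setminus M^T$ the equivariant form $d_\kt\theta$ is invertible, and expanding $1/[d_\kt\theta]$ as a geometric series in $d\theta/\|X_M\|^2$ defines
\[
\beta(X)=\frac{\theta(X)\wedge\alpha(X)}{[d_\kt\theta](X)}.
\]
Because $d_\kt\theta$ is $d_\kt$-closed and $d_\kt\alpha=0$, the form $\beta$ satisfies $d_\kt\beta=\alpha$ on $M\setminus M^T$.

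Next I would carry out the localization. Excise small $\kt$-invariant open balls $B_f$ about each fixed point $f\in M^T$ and apply Stokes' theorem on $M\setminus\bigcup_f B_f$; observing that $\int_{B_f}\alpha\to 0$ as $B_f\to\{f\}$, this gives
\[
\int_M\alpha \;=\; -\sum_{f\in M^T}\lim_{B_f\to\{f\}}\int_{\partial B_f}\beta.
\]
To evaluate each limit I would invoke the equivariant slice theorem: a $\kt$-neighborhood of $f$ is $\kt$-equivariantly diffeomorphic to $T_fM$ with the linear representation whose real weights $\lambda_1(f),\ldots,\lambda_n(f)\in\mathfrak{t}^*$ are the eigenvalues of $\kt$ on $T_fM$. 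The boundary integral then reduces to an explicit calculation in this linear model on $\RR^{2n}\cong\CC^n$; a direct Gaussian-type equivariant computation, which is the technical heart of the argument, yields
\[
-\lim_{B_f\to\{f\}}\int_{\partial B_f}\beta \;=\; \frac{\alpha^{[0]}(f)}{\prod_j\lambda_j(f)} \;=\; \frac{\alpha^{[0]}(f)}{\mathrm{Euler}^T(T_fM)}.
\]
Summing over fixed points delivers the claimed formula.

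The main obstacles are twofold. First, the geometric-series definition of $\beta$ must be given rigorous meaning as an element of the Cartan complex; this is handled by working polynomial-degree by polynomial-degree in $X\in\mathfrak{t}$, so that in each bidegree only finitely many terms of the series contribute and everything stays in the ordinary (uncompleted) equivariant de Rham complex. Second, the local model computation at each fixed point must be performed with careful attention to signs and orientations so that the denominator is exactly $\prod_j\lambda_j(f)$ and agrees with the standard convention for $\mathrm{Euler}^T(T_fM)$; this is pinned down by the canonical orientation on the real weight-space decomposition of $T_fM$ together with the outward orientation on $\partial B_f$. The remaining ingredients—Stokes, compactness of $M$, invariant-metric averaging, and the slice theorem—are routine.
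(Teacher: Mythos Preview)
The paper does not supply its own proof of this theorem: it is quoted as a known result and referenced to \cite{bgv}, Theorem~7.11. Your outline is the standard Cartan-model argument---the invariant one-form $\theta$, the equivariant primitive $\beta=\theta\alpha/d_\kt\theta$ on the complement of the fixed locus, Stokes on $M$ minus small balls, and the local linear computation at each fixed point---which is precisely the proof given in the cited reference. So your approach is correct and agrees with what the paper relies on; there is nothing to compare beyond noting that the paper treats this as background input rather than something to be proved.
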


The right hand side in the localisation formula considered in the fraction field of the polynomial ring of $H_T^\bullet (\mathrm{point})=H^\bullet(BT)=S^\bullet \mathfrak{t}^*$ (see more on details in \cite{ab,bgv}). Part of the statement is that the denominators cancel when the sum is simplified.

\subsection{Equivariant Poincar\'e duals and multidegrees}
\label{subsec:epdmult} 

Restricting the equivariant de Rham complex to compactly supported (or quickly
decreasing at infinity) differential forms, one obtains the compactly
supported equivariant cohomology groups $ H^\bullet_{\kt,\mathrm{cpt}}(M)
$. Clearly $H^\bullet_{\kt,\mathrm{cpt}}(M) $ is a module over
$H^\bullet_\kt(M)$. For the case when $M=W$ is an $N$-dimensional
complex vector space, and the action is linear, one has
$H^\bullet_\kt(W)= S^\bullet\mathfrak{t}^*$ and $ H^\bullet_{\kt,\mathrm{cpt}}(W) $ is
a free module over $H^\bullet_\kt(W)$ generated by a single element of
degree $2N$:
\begin{equation}
  \label{thomg}
   H^\bullet_{\kt,\mathrm{cpt}}(W) = H^\bullet_{\kt}(W)\cdot\mathrm{Thom}_{\kt}(W),
\end{equation}

Fixing coordinates $y_1,\dots,y_N$ on $W$, in which the $T$-action is
diagonal with weights $\eta_1,\ldots,  \eta_N$, one can write an explicit
representative of  $\mathrm{Thom}_{\kt}(W)$ as follows:
\[   \mathrm{Thom}_{\kt}(W) = 
e^{-\sum_{i=1}^N|y_i|^2}\sum_{\sigma\subset\{1,\ldots , N\}}
\prod_{i\in\sigma}\eta_i/2\cdot\prod_{i\notin \sigma}dy_i\,d\bar y_i
\]

We will say that an algebraic variety has dimension $d$ if its
maximal-dimensional irreducible components are of dimension $d$.  A
$T$-invariant algebraic subvariety $\Sigma$ of dimension $d$ in $W$
represents $\kt$-equivariant $2d$-cycle in the sense that
\begin{itemize}
\item a compactly-supported equivariant form $\mu$ of degree $2d$ is
  absolutely integrable over the components of maximal dimension of
  $\Sigma$, and $\int_\Sigma\mu\in S^\bullet \mathfrak{t}$;
\item if $d_\kt\mu=0$, then $\int_\Sigma\mu$ depends only on the class
  of $\mu$ in $ H^\bullet_{\kt,\mathrm{cpt}}(W) $,
\item and $\int_\Sigma\mu=0$  if $\mu=d_\kt\nu$ for a
  compactly-supported equivariant form $\nu$.
\end{itemize}

\begin{definition} \label{defepd} Let $\Sigma$ be an $T$-invariant algebraic
  subvariety of dimension $d$ in the vector space $W$. Then the
  equivariant Poincar\'e dual of $\Sigma$ is the polynomial on $\mathfrak{t}$
  defined by the integral
\begin{equation}
 \label{vergneepd}
 \epd\Sigma = \frac1{(2\pi)^d}\int_\Sigma\mathrm{Thom}_{\kt}(W).
\end{equation}  
\end{definition}
\begin{remark}
  \begin{enumerate}
  \item An immediate consequence of the definition is that for an equivariantly
closed differential form $\mu$ with compact support, we have
\[  \int_\Sigma\mu = \int_W \epd\Sigma\cdot\mu.
\]
This formula serves as the motivation for the term {\em equivariant
  Poincar\'e dual.}
\item This definition naturally extends to the case of an analytic
  subvariety of $\CC^n$  defined in the neighborhood of the origin, or
  more generally, to any $T$-invariant cycle in $\CC^n$.
  \end{enumerate}
\end{remark}

We list some basic properties of the equivariant Poincar\'e dual. The
proofs can be found in \cite{rossmann},\cite{voj},\cite{milsturm}.
(cf. Proposition \ref{mdepd})
\begin{proposition}
\label{epdprops}
  \begin{description}
  \item[Positivity] The equivariant Poincar\'e dual $\epd\Sigma$ of a
    $d$-dimensional subvariety of $W$ is a homogeneous polynomial of
    degree $N-d$ in $\lambda_1,\ldots ,\lambda_n$, which may be expressed
    as a positive integral polynomial of the weights  $\eta_i$, $i=1,\ldots , N$.
  \item[Additivity] If $\Sigma_1,\Sigma_2\subset W$ are two
    $T$-invariant subvarieties of dimension $d$ having no common
    components of top dimension, then
    $\epd{\Sigma_1\cup\Sigma_2}=\epd{\Sigma_1}+\epd{\Sigma_2}$.
  \item[Deformation invariance] If $\Sigma_t$ is a flat algebraic
    family of varieties then $\epd{\Sigma_t}$ is independent of
    $t$. 
  \item[Symmetry] 
  Let $T=(\CC^*)^n$ be the subgroup of diagonal matrices of the complex
  group $\mathrm{GL}_n$, and denote by $\lambda_1,\ldots ,\lambda_n$ its basic
  weights. If $\Sigma$ is a $\mathrm{GL})_n$-invariant subvariety of the 
  $\mathrm{GL}_n$-module $W$, then the equivariant Poincar\'e dual $\epd{\Sigma,W}_T$ is
  a {\em symmetric} polynomial in $\lambda_1,\ldots ,\lambda_n$.
  \item[Complete intersections]  Let the variety $\Sigma$ be a complete
    intersection defined by $r$ relations:
    $f_1,\dots,f_r\in\CC[y_1,\dots,y_N]$ of degrees
    $\alpha_1,\dots,\alpha_r\in\mathfrak{t}^*$ correspondingly. Then 
    \begin{equation*}
      \label{ci}
   \epd{\Sigma} = \prod_{i=1}^r\alpha_i,      
    \end{equation*}

\item[Elimination] Let $\Sigma\subset W$ be a closed $T$-invariant
  subvariety, and denote by $I(\Sigma)$ the ideal of functions
  vanishing on $\Sigma$. Fix a polynomial $f\in \CC[y_1,\dots,y_N]$ of
  weight $\eta_0$, and let $\Sigma_f$ be the variety in
  $W\oplus\CC y_0$ with ideal generated by $I(\Sigma)$ and
  $y_0-f$. Then
\[\epd{\Sigma_f,W\oplus\CC y_0}=\eta_0\cdot \epd{\Sigma,W}\]
  \end{description}
\end{proposition}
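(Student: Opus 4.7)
The plan is to unpack the defining integral $\epd\Sigma=(2\pi)^{-d}\int_\Sigma\mathrm{Thom}_\kt(W)$, handle the soft properties directly, and reduce Complete Intersection and Elimination to the case of a single hypersurface, leaving Positivity for last.

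\textbf{Additivity} is immediate: the integral over $\Sigma_1\cup\Sigma_2$ splits into a sum over top-dimensional components, as any intersection of lower dimension contributes nothing to the integral of a top-degree form. \textbf{Deformation invariance} is a conservation-of-number statement: for a flat family the $T$-equivariant Hilbert series of the coordinate ring is independent of $t$, and $\epd{\Sigma_t}$ is determined by the leading term of this series, hence is locally constant on the base. \textbf{Symmetry} follows because any element $w\in N(T)/T$ of the Weyl group $S_n$ lifts to $\mathrm{GL}_n$, under which $\Sigma$ is invariant; pulling back the Thom form by $w$ merely permutes the coordinates $\lambda_i$, so $\epd\Sigma$ is $S_n$-symmetric.

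For \textbf{Complete intersections} I would induct on $r$. The base case is a hypersurface $\Sigma=\{f=0\}$ with $f$ of weight $\alpha$: in coordinates chosen so that $f$ is (a perturbation of) a coordinate, the Thom form restricts to a multiple of $\alpha$, and a direct Stokes-type computation yields $\epd\Sigma=\alpha$. For the inductive step the complete-intersection hypothesis provides transversality: writing $\Sigma=\Sigma'\cap\{f_r=0\}$ with $\Sigma'$ the complete intersection of $f_1,\ldots,f_{r-1}$, one uses that transversal intersection of $T$-invariant cycles multiplies the equivariant Thom representatives, giving $\epd\Sigma=\prod\alpha_i$. \textbf{Elimination} then follows immediately: $\Sigma_f$ is the transversal intersection of the cylinder $\Sigma\times\CC y_0\subset W\oplus\CC y_0$ (whose EPD equals $\epd\Sigma$ by the product formula) with the hyperplane $\{y_0-f=0\}$ of weight $\eta_0$, and the complete-intersection property gives the factorisation $\epd{\Sigma_f}=\eta_0\cdot\epd\Sigma$.

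The main obstacle is \textbf{Positivity}, since the defining integral has no manifest sign. The plan is to identify $\epd\Sigma$ with the $T$-multidegree of the ideal $I(\Sigma)$ in the sense of Miller and Sturmfels: pick a term order compatible with the $T$-grading and flatly degenerate $\Sigma$ to the subscheme defined by the initial ideal $\mathrm{in}(I(\Sigma))$, a union with multiplicities of coordinate subspaces $L_J\subset W$. By Deformation invariance the EPD is preserved under this degeneration, by Additivity it becomes a sum of the contributions from each $L_J$, and each $L_J$ is a coordinate-hyperplane complete intersection whose EPD is the positive monomial $\prod_{i\notin J}\eta_i$ via the already-established Complete Intersection formula. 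Collecting the contributions expresses $\epd\Sigma$ as a positive integral combination of monomials in $\eta_1,\ldots,\eta_N$, as required.
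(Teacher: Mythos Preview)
The paper does not actually prove this proposition: immediately before the statement it says ``The proofs can be found in \cite{rossmann},\cite{voj},\cite{milsturm}'' and moves on. What the paper \emph{does} supply is the identification $\epd{\Sigma,W}=\mdeg{I(\Sigma),S}$ (Proposition~\ref{mdepd}) together with the Gr\"obner--degeneration algorithm culminating in formula~\eqref{mdegformula}. Your Positivity argument is precisely this algorithm, so on that point you are in line with the paper's own machinery; for Additivity, Deformation invariance, Symmetry and Elimination your arguments are standard and correct, and again match the spirit of the cited sources.

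The one place your sketch is genuinely soft is the base case of \textbf{Complete intersections}. The phrase ``in coordinates chosen so that $f$ is (a perturbation of) a coordinate'' is not justified: a weight-$\alpha$ polynomial need not be linearisable by a $T$-equivariant change of variables, and a ``Stokes-type computation'' is not a proof. The clean route, consistent with the rest of your argument and with the paper's later discussion, is to use the multidegree side: for a single nonzerodivisor $f$ of weight $\alpha$ the free resolution $0\to S(-\alpha)\xrightarrow{f} S\to S/(f)\to 0$ gives $K$-polynomial $1-t^\alpha$, whose codimension-one part is $\alpha$, hence $\epd{\{f=0\}}=\alpha$. The inductive step then follows from the Koszul resolution of a regular sequence (giving $K$-polynomial $\prod_i(1-t^{\alpha_i})$ and multidegree $\prod_i\alpha_i$), rather than from an unproved ``transversal intersection multiplies Thom representatives'' principle. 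With that repair your proof goes through and is self-contained in a way the paper's is not.
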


\begin{remark}
  Another way of writing the formula for complete intersections is the
  following. Let $E$ be a $T$-vector space with a list of weights
  $\alpha_1,\ldots ,\alpha_r$, and denote by $\mathrm{Euler}^T(E)$ the equivariant
  Euler class of $E$, i.e.
  \begin{equation*}
     \mathrm{Euler}^T(E) = \prod_{i=1}^r\alpha_i.    
  \end{equation*}
Suppose that $\gamma:W\to E$ is an equivariant polynomial map with the
property that the differential $d\gamma:W\to E$ is surjective on a
Zariski open part of $\gamma^{-1}(0)$. Then 
\begin{equation*}
  \epd{\gamma^{-1}(0),W}=\mathrm{Euler}^T(E).
\end{equation*}
\end{remark}
\begin{remark} \label{passtoeuler} An important special case of complete
  intersections are the linear subspaces. For these, the formula \eqref{ci}
  takes the following form: for every subset $\mathbf{i}\subset\{1,\ldots ,
  N\}$ we have
\begin{equation}
\epd{
\{y_{i}=0,\,i\in\mathbf{i}\},W}=
\prod_{i\in\mathbf{i}}\eta_i.
\end{equation}
\end{remark}

Another incarnation of the equivariant Poincar\'e dual is the notion
of {\em multidegree}, which is close in spirit to the original
construction of Joseph \cite{joseph}. 

Introduce the notation $S=\CC[y_1,\ldots, y_N]$ for the polynomial
functions on $W$, and denote the ideal of the functions vanishing on the
$T$-invariant subvariety $\Sigma\subset W$ by $I(\Sigma)$; thus
$I(\Sigma)=\{f\in\CC[y_1,\ldots , y_N];\;f(p)=0\text{ if }p\in\Sigma\}$.

Consider a finite (length-$M$), $T$-graded resolution of $S/I(\Sigma)$
by free $S$-modules:
\[
\oplus_{i=1}^{j[M]}  Sw_i[M]\to\dots\to  \oplus_{i=1}^{j[m]}
 Sw_i[m]\to\dots\to \oplus_{i=1}^{j[1]} S w_i[1] \to S \to S/I(\Sigma)\to 0;
\]
where $w_i[m]$ is a free generator of degree $\eta_i[m]\in\Lambda$
for $i=1,\dots j[m]$, $m=1,\ldots , M$. Then the multidegree of the ideal
$I(\Sigma)$ is defined by the formula
\begin{equation*}
  \mdeg{I,S}_T = \frac1{D!}\sum_{m=1}^M\sum_{i=1}^{j[m]} (-1)^{D-m} \eta_i[m]^D
\end{equation*}
where $D$ is the codimension of $\Sigma$.

\begin{proposition}[\cite{rossmann}]
\label{mdepd} Let  $\Sigma \subset W$ be a $T$-invariant
subvariety. Then we have
\[       \epd{\Sigma,W}_T=\mdeg{I(\Sigma),\CC[y_1,\ldots , y_N]}.
\]
\end{proposition}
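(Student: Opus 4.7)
The plan is to show that both sides of the identity $\epd{\Sigma,W}_T=\mdeg{I(\Sigma),\CC[y_1,\dots,y_N]}$ satisfy the same list of characterizing properties (normalization on coordinate subspaces, additivity, deformation invariance) and then reduce to a case where the equality is manifest. The properties for $\epd{\cdot}$ are exactly those listed in Proposition \ref{epdprops}, so the real work is verifying that $\mdeg{\cdot}$ shares them.

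First I would check that $\mdeg{I,S}$ is well-defined, i.e. independent of the $T$-graded free resolution. Any two such resolutions are chain-homotopy equivalent, and the alternating sum $\frac{1}{D!}\sum_{m,i}(-1)^{D-m}\eta_i[m]^D$ is a chain-invariant of the graded Euler characteristic; concretely, it is the coefficient of $t^D$ in the Taylor expansion of the numerator of the $T$-Hilbert series of $S/I(\Sigma)$, which depends only on the module. Next I would establish the three key properties of multidegree: (i) \emph{complete intersections}: the Koszul resolution of $(f_1,\dots,f_r)$ of weights $\alpha_1,\dots,\alpha_r$ gives directly $\mdeg{I,S} = \prod_i \alpha_i$, matching Proposition \ref{epdprops}; (ii) \emph{additivity}: from the four-term exact sequence
\[
0 \to S/(I_1\cap I_2) \to S/I_1 \oplus S/I_2 \to S/(I_1+I_2) \to 0,
\]
the Hilbert-series characterization shows $\mdeg{I_1\cap I_2}=\mdeg{I_1}+\mdeg{I_2}$ whenever $\mathrm{codim}(V(I_1+I_2))>D$, which is precisely the hypothesis of no shared top-dimensional component; (iii) \emph{deformation invariance}: under a flat $T$-equivariant family the Hilbert series is constant, hence so is the multidegree.

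With both sides obeying the same properties, I would apply an equivariant Gröbner (or flat) degeneration with respect to a $T$-compatible term order, degenerating $I(\Sigma)$ to a monomial ideal $J$. By deformation invariance, both $\epd{V(J),W}_T$ and $\mdeg{J,S}$ equal their counterparts for $\Sigma$. The primary decomposition of $J$ writes $V(J)$, as a cycle, as a positive integer combination of coordinate linear subspaces $L_\sigma=\{y_i=0:\,i\in\sigma\}$ counted with multiplicity. By additivity it suffices to verify the identity for a single $L_\sigma$, where the Koszul resolution of the regular sequence $\{y_i\}_{i\in\sigma}$ computes $\mdeg{I(L_\sigma)}=\prod_{i\in\sigma}\eta_i$, and the equivariant Poincaré dual is $\prod_{i\in\sigma}\eta_i$ by Remark \ref{passtoeuler}.

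The main obstacle will be handling multiplicities correctly under the Gröbner degeneration: the initial ideal $J$ generally has embedded primes, and one must argue that only the top-dimensional associated primes contribute to both $\epd{\cdot}$ and $\mdeg{\cdot}$, with multiplicity equal to $\mathrm{length}(S_\mathfrak{p}/I_\mathfrak{p})$ at each minimal prime $\mathfrak{p}$. For the multidegree side this follows from the filtration of $S/J$ by cyclic modules $S/\mathfrak{p}_i$ and the fact that lower-dimensional modules contribute $0$ to the degree-$D$ part of the Hilbert numerator; for the equivariant Poincaré dual it follows from the fact that lower-dimensional cycles integrate to $0$ against a top-degree Thom form. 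Once multiplicities match, the reduction to coordinate subspaces closes the argument.
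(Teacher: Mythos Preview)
The paper does not actually prove this proposition; it is cited from Rossmann without proof, and the discussion immediately following (the Gr\"obner degeneration algorithm from \cite{milsturm}) is not framed as a proof of Proposition \ref{mdepd} but as a computational tool for the multidegree. That said, your proposal is correct and is precisely the standard argument: the axiomatic characterization by normalization/additivity/deformation invariance, followed by Gr\"obner degeneration to a monomial ideal and reduction to coordinate subspaces, is exactly the strategy implicit in the paper's post-proposition sketch and made explicit in \cite{milsturm}. Your treatment of multiplicities via filtration by $S/\mathfrak{p}$'s and the vanishing of lower-dimensional contributions is also the right way to close the gap.
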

Following \cite{milsturm} \S8.5, now we sketch an algorithm for
computing $\mdeg{I,S}$, proving that the axioms determine this
invariant.
The monomials $\mathbf{y}^{\mathbf{a}}=\prod_{i=1}^Ny_i^{a_i}\in
S=\CC[y_1,\ldots, y_N]$ are parametrized by the integer vectors
$\mathbf{a}=(a_1\ldots  a_N)\in\ZZ_+^N$. A {\em monomial order} $<$ on
$S$ is a total order of the monomials in $S$ such that for any three
monomials $m_1,m_2,n$ satisfying $m_1>m_2$, we have $nm_1>nm_2>m_2$
(see \cite[\S 15.2]{eisenbud} ). An ordering of the coordinates $y_1,\ldots, y_N$ induces the
so-called {\em  lexicographic} monomial order of the monomials, that
is, $\mathbf{y}^{\mathbf{a}}>\mathbf{y}^{\mathbf{b}}$ if and only if
$a_i>b_i$ for the first index $i$ with $a_i \neq b_i$. 

Let $I\subset S$ be a $T$-invariant ideal. Define the {\em initial
  ideal} $\mathrm{in}_<(I)\subset S$ to be the ideal generated by
the monomials $\left\{\mathrm{in}_<(p):p\in I \right\}$, where
$\mathrm{in}_<(p)$ is the largest monomial of $p$ w.r.t $<$. There is
a flat deformation of $I$ into $\mathrm{in}_<(I)$ (see \cite{eisenbud},
Theorem 15.17.).

An ideal $M\subset S$ generated by a set of monomials in
$y_1,\ldots, y_N$ is called a \emph{monomial ideal}. Since
$\mathrm{in}_<(I)$ is such an ideal, by the deformation invariance
it is enough to compute $\mdeg{M}$ for monomial ideals $M$. If the
codimension of $\Sigma(M)$ in $W$ is $s$, then the maximal
dimensional components of $\Sigma(M)$ are codimension-$s$ coordinate
subspaces of $W$. Such subspaces are indexed by subsets
$\mathbf{i}\in\{1\ldots  N\}$ of cardinality $s$; the corresponding
associated primes are $\mathfrak{p}[\mathbf{i}]=\langle y_i:i\in
\mathbf{i} \rangle$. Then 
\begin{equation*}
\label{primemon} \mult(\mathfrak{p}[\mathbf{i}],M)=
\left|\left\{\mathbf{a}\in\ZZ_+^{[{\mathbf{i}}]};\;
\mathbf{y}^{\mathbf{a}+\mathbf{b}}\notin M\text{ for all }
\mathbf{b}\in\ZZ_+^{[\hat{\mathbf{i}}]}\right\}\right|,
\end{equation*}
where $\ZZ_+^{[\mathbf{i}]}=\{\mathbf{a}\in \ZZ_+^N;a_i=0 \text{ for }
i\notin \mathbf{i}\}$, $\hat{\mathbf{i}}=\{1\ldots 
N\}\setminus\mathbf{i}$, and $|\cdot|$, as usual, stands for the
number of elements of a finite set. By the normalization and additivity axiom we have
\begin{equation}\label{mdegformula}
\mdeg{M,S} =
\sum_{|\mathbf{i}|=s}\mult(\mathfrak{p}[\mathbf{i}],M)
\prod_{i\in\mathbf{i}}\eta_i.
\end{equation}
By definition, the weights $\eta_1,\ldots \eta_N$ on $W$ are linear forms of $\l_1,\ldots \l_r$, the basis of $(\CC^*)^r$, and we denote the coefficient of $\l_j$ in $\eta_i$ by $\coeff(\eta_i,j)$, $1\le i\le N, 1\le j \le r$, and introduce 
\[\deg(\eta_1,\ldots, \eta_N;m)=\#\{i;\;\coeff(\eta_i,m)\neq 0\}\}.\]
It is clear from the formula \eqref{mdegformula} that 
\begin{equation*}
\deg_{\l_m}\mdeg{I,S} \le \deg(\eta_1,\ldots, \eta_N;m)
\end{equation*}
holds for any $1\le m \le r$. 
We need a slightly stronger result in the next section which we formulate and prove here.

\begin{proposition}\label{vanishlemma}
Let $W$ be an $N$-dimensional complex
vector space with coordinates $y_1,\dots,y_N$ endowed with an diagonal action of $(\CC^*)^r$ acting with weights $\eta_1\ldots \eta_N$. Let $I\subset S$ be a $(\CC^*)^r$-invariant ideal. Then 
\begin{equation*}
\deg_{\l_m}\mdeg{I,S} \le \deg(\eta_1,\ldots, \eta_N;m)-1
\end{equation*}
\end{proposition}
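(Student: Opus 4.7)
The plan is to reduce to the case of a $(\CC^*)^r$-invariant monomial ideal via a Gr\"obner-type degeneration, then isolate the coefficient of the extremal power $\lambda_m^{d_m}$ (with $d_m := \deg(\eta_1,\ldots,\eta_N;m)$) in the resulting multidegree sum and exhibit a cancellation forced by the full $(\CC^*)^r$-equivariance.

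First I would choose a monomial order $<$ on $S = \CC[y_1,\ldots,y_N]$ compatible with the torus grading. Since $I$ is $(\CC^*)^r$-invariant, so is the initial ideal $M := \mathrm{in}_<(I)$, which is also monomial. The flat deformation from $I$ to $M$ (see \cite{eisenbud}, Theorem~15.17) together with the deformation-invariance of the multidegree (Proposition~\ref{epdprops}) gives $\mdeg{I,S} = \mdeg{M,S}$, so it suffices to prove the bound for monomial $M$. Setting $A := \{i : \coeff(\eta_i,m)\neq 0\}$, $|A| = d_m$, the product $\prod_{i\in\mathbf{i}}\eta_i$ in formula~\eqref{mdegformula} can contribute to $\lambda_m^{d_m}$ only when $A \subseteq \mathbf{i}$, so extracting the extremal coefficient gives
\[
[\lambda_m^{d_m}]\,\mdeg{M,S} \;=\; \Bigl(\prod_{i\in A}\coeff(\eta_i,m)\Bigr)\sum_{\substack{A\subseteq \mathbf{i}\\ |\mathbf{i}|=s}}\mult(\mathfrak{p}[\mathbf{i}],M)\prod_{i\in\mathbf{i}\setminus A}\eta_i,
\]
and the proposition reduces to the vanishing of this inner sum.

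This vanishing is the crux of the argument, and where the real work lies. The approach I would pursue is to reinterpret the sum as the multidegree, with respect to the residual $(\CC^*)^{r-1}$-action obtained by specialising $\lambda_m = 0$, of a suitable pushforward/restriction of $\mathrm{Spec}(S/M)$ onto the coordinate subspace $W_{\hat A} := \{y_i = 0 : i \in A\}$. The sub-torus $\CC^*_m \subset (\CC^*)^r$ generated by $\lambda_m$ fixes $W_{\hat A}$ pointwise and preserves $M$, so $\CC^*_m$-equivariant localisation onto this fixed locus forces a drop of one in the $\lambda_m$-degree compared with the a~priori bound $\deg_{\lambda_m}\mdeg{M,S}\le d_m$ that the weaker inequality already furnishes. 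The hardest part will be to make this pushforward/restriction rigorous in the presence of embedded and higher-multiplicity components of $M$, and to verify the resulting combinatorial identity among the multiplicities $\mult(\mathfrak{p}[\mathbf{i}],M)$ as $\mathbf{i}$ ranges over the supersets of $A$ of cardinality $s$. An equivalent analytic route, which may be cleaner, is to express the leading $\lambda_m$-coefficient via the integral formula~\eqref{vergneepd} for the Thom class and identify it as the integral of a $d_\kt$-exact form produced by the $\CC^*_m$-symmetry, so that it automatically vanishes.
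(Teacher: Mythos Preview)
Your approach diverges from the paper's and runs into a real obstruction. The paper does \emph{not} pass to an arbitrary initial ideal and then argue a cancellation in the extremal $\lambda_m$-coefficient; it makes the choice of monomial order do all the work. After relabelling so that $\coeff(\eta_1,m)\neq 0$, the paper takes the weight order determined by $\rho(y_1)=-1$, $\rho(y_2)=\cdots=\rho(y_N)=0$ and refines it to a monomial order $>$. Then for any $p\in I$ possessing a monomial free of $y_1$, that monomial has weight $0$ and hence dominates every $y_1$-multiple, so $y_1\nmid\mathrm{in}_>(p)$. The paper argues from this that no minimal generator of $\mathrm{in}_>(I)$ is divisible by $y_1$; consequently no minimal prime $\mathfrak{p}[\mathbf{i}]$ with $1\in\mathbf{i}$ occurs in formula~\eqref{mdegformula}, the weight $\eta_1$ simply never appears in $\mdeg{\mathrm{in}_>(I),S}$, and only $\eta_2,\ldots,\eta_{d_m}$ can carry $\lambda_m$, giving $\deg_{\lambda_m}\le d_m-1$ immediately. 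There is no cancellation to verify.

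Your route has a structural gap. Once you degenerate to a monomial ideal $M$ via an unspecified order ``compatible with the torus grading'', you retain no information about $M$ beyond its being a $(\CC^*)^r$-invariant monomial ideal---but \emph{every} monomial ideal is that, being invariant for the full $(\CC^*)^N$ and hence for any subtorus. So the vanishing you are trying to establish is $[\lambda_m^{d_m}]\,\mdeg{M,S}=0$ for an arbitrary monomial ideal, and this is false: for $M=(y_1,\ldots,y_N)$ one has $\mdeg{M,S}=\prod_i\eta_i$ with $\lambda_m$-degree exactly $d_m$. Neither your $\CC^*_m$-localisation sketch nor the $d_\kt$-exactness variant can produce the drop you want, because the conclusion does not hold at that level of generality. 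The point of the paper's proof is that the monomial order is not an incidental choice but the entire mechanism: the specific weight $\rho$ forces $\eta_1$ out of the multidegree outright, which is what buys the extra $-1$.
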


\proof
By the positivity property of the multidegree $\mdeg{I,S}$ is indeed a polynomial of the weights $\eta_i,i=1,\ldots, N$. Let 
\[\coeff(\eta_i,m)\neq 0 \text{ for } 1 \le i \le s;\ \coeff(\eta_{s+1},m)=\ldots =\coeff(\eta_{N},m)=0.\]
The idea of the proof is to choose an appropriate monomial order on the polynomial ring $S=\CC[y_1,\ldots, y_N]$ to ensure that $y_1$ does not appear in the corresponding initial ideal. 

To that end recall, that a weight function is a linear map $\rho: \ZZ^N \to \ZZ$. This defines a partial order $>_\rho$ on the monomials of $S$, called the weight order associated to $\rho$, by the rule $m=y^a >_\rho n=y^b$ iff $\rho(a)>\rho(b)$. Here $a=(a_1,\ldots ,a_N),b=(b_1,\ldots ,b_N)$ are arbitrary multiindices. Any weight order can be extended to a compatible monomial order $>$ (see \cite{eisenbud}, Ch 15.2), which means that $m>_\rho n$ implies $m>n$. 
For our purposes define 
\begin{equation*}
\rho(y_1)=-1, \rho(y_2)=\ldots =\rho(y_N)=0
\end{equation*}    
and let $>$ denote arbitrary compatible monomial order on $S$. By definition for a monomial $m \in S$
\begin{equation}\label{weights}
\rho(m)<0 \Longleftrightarrow y_1|m
\end{equation} 
Let $p \in I$, and assume that not all monomials of $p$ are divisible by $y_1$. If they all did, $y_1|p$, and therefore $in_{>}(p/y_1)|in_{>}(p)$ would hold, and therefore $p$ would not be among the generators of the $in_>(I)$. Therefore $y_1$ does not divide $p$.

Then there is a monomial of $p$ not containing $y_1$, and by \eqref{weights} the weight of this monomial is strictly bigger to the weight of any other containing $y_1$.  
Consequently, $y_1$ does not divide any of the generators of $in_>(I)$, and by \eqref{mdegformula} $\mdeg{I,S}$ does not depend on $\eta_1$. The only possible variables containing $\l_m$ are therefore $\eta_2,\ldots ,\eta_s$, giving a maximum total degre $s-1$.  
\qed 
\subsection{Equivariant localisation on $\tilde{\calx}_{k}$}\label{subsec:loc}
In this subsection we develop a two step equivariant localisation method on $\tilde{\calx}_{k}$ which is a fibred and stronger version of our iterated residue in \cite{bsz}. It is based on the Rossmann equivariant localisation formula, which is an improved version of the Atiyah-Bott/Berline-Vergne localisation for singular varieties sitting in a smooth ambient space. 

We also refer this later as the Snowman Model, due to the figure in $\S 6$ of \cite{bsz}, which summarises the process. We need an important restriction on the parameters to make this method work, namely we assume that $k\le n$ in this section.
In $\S$\ref{quotient} we defined a partial resolution $\tilde{X}_k \to X_k$, which fibres over the flag manifold $\flag_k(\CC^n)$
\begin{equation*}
\xymatrix{\tilde{X}_k \ar[r]^-{\rho} \ar[d]^{\mu} & X_k \subset \flag_k(\symdot) \\
\Hom(\CC^k,\CC^n)/B_k=\flag_k(\CC^n)& } 
\end{equation*}
where the fibres of $\mu$ are isomorphic to $\overline{P_{k,n}\cdot p_n}\subset \flag_k(\symdot)$.
The fibred version of this diagram is a double fibration
\begin{equation}\label{diagram}
\xymatrix{\tilde{\calx}_k \ar[r]^-{\rho} \ar[d]^{\mu} & \calx_k\subset \flag_k(\symdotx) \\
\flag_k(T_X) \ar[d]^{\tau} & \\
X & } 
\end{equation}
where $\flag_k(T_X)$ is the flag bundle of $T_X$, and over every point $x\in X$ we get back the previous diagram. 

Let $e_1,\ldots, e_n \in \CC^n$ be an eigenbasis of $\CC^n$ for the $T$ action with weights $\l_1,\ldots, \l_n\in \mathfrak{t}^*$ and let 
\[\ff=(\langle e_1 \rangle \subset \langle e_1,e_2 \rangle \subset \ldots \subset \langle e_1,\ldots,e_k \rangle \subset \CC^n)\]
denote the standard flag in $\CC^n$ fixed by the upper Borel.

Recall our notations \eqref{uhnotations} for the canonical line bundles on $\tilde{\calx}_k$ and $X \subset \PP^{n+1}$.
% Let $\beta \in \Omega^{n+k(n-1)}(\tilde{\calx}_k)$ be a top form representing the cohomology class %$R(u,\tilde{\pi}^*h)\in H^{2n+2k(n-1)}(\tilde{\calx}_k)$, where $R$ is a homogeneous polynomial of %degree $n+k(n-1)=\dim(\tilde{\calx}_k)$. Let $\alpha \in H_T^{n+k(n-1)}(\tilde{\calx}_k)$ be an %equivariant form with differential-form-degree-$n+k(n-1)$ part equal to $\beta$, that is, $\alpha^{[n%+k(n-1)]}=\beta$.
%To evaluate the integral $\int_{\tilde{\calx}_k}\alpha$ we first integrate (push forward) along the %fibres of $\tilde{\pi}:\tilde{\calx}_k \to X$ and then integrate on $X$. 
The fibres of $\tilde{\pi}:\tilde{\calx}_k \to X$ are canonically isomorphic to $\tilde{X}_k$. Localisation on $\tilde{X}_k$ has been worked out in \cite{bsz}, here we adapt and improve this method for our purposes. 

Since $\tilde{X}_{k}$ fibres over the flag manifold $\flag_k(\CC^n)$,  Proposition \ref{abbv} gives us   
\begin{equation} \label{flagloc}
\int_{\tilde{X}_k}\alpha= \sum_{\sigma\in\sg n/\sg{n-k}}
\frac{\alpha_{\sigma(\ff)}}{\prod_{1\leq m \leq
k}\prod_{i=m+1}^n(\lambda_{\sigma\cdot
    i}-\lambda_{\sigma\cdot m})},
\end{equation}
where 
\begin{itemize}
\item $\sigma$ runs over the ordered $k$-element subsets of $\{1,\ldots, n\}$ labeling the fixed flags $\sigma(\ff)=(\langle e_{\sigma(1)} \rangle \subset \ldots \subset \langle e_{\sigma(1)},\ldots, e_{\sigma(k)} \rangle \subset \CC^n)$ in $\CC^n$, 
\item $\prod_{1\leq m \leq k}\prod_{i=m+1}^n(\lambda_{\sigma(i)}-\lambda_{\sigma(m)})$ is the equivariant Euler class of the tangent space of $\flag_k(\CC^n)$ at $\s(\ff)$,
\item if $X_{\sigma(\ff)}=\mu^{-1}(\sigma(\ff))$ denotes the fibre then $\alpha_{\sigma(\ff)}=(\int_{X_{\sigma(\ff)}} \alpha)^{[0]}(\sigma(\ff))\in S^\bullet \mathfrak{t}^*$ is the differential-form-degree-zero part evaluated at $\sigma(\ff)$.
\end{itemize}
In particular, when $\alpha=\alpha(u)$ is a polynomial of $u=c_1(\calo_{\tilde{X}_k}(1)$, then $u$ is represented by $\l_{\s(1)}+\ldots +\l_{\s(k)}\in \mathfrak{t}^*$ at the fixed point $\sigma(\ff)$, and therefore 
\begin{equation}\label{alphasigmaf}
\alpha_{\s(\ff)}=\sigma \cdot \alpha_\ff=\alpha_\ff(\l_{\s(1)}+\ldots+\l_{\s(k)})\in S^\bullet \mathfrak{t}^*,
\end{equation}
is the $\sigma$-shift of the polynomial $\alpha_{\ff}=(\int_{X_{\ff}}\alpha)^{[0]}(\ff)\in S^\bullet \mathfrak{t}^*$ corresponding to the distinguished fixed flag $\ff$.

\subsection{Proof of Theorem \ref{maintechnical}: Transforming the localisation formula into iterated residue}\label{subsec:transform}

In this section we transform the right hand side of \eqref{flagloc} into an iterated residue motivated by \cite{bsz}. This step turns out to be crucial in handling the combinatorial complexity of the fixed point data in the Atiyah-Bott localisation formula and condense the symmetry of this fixed point data in an efficient way which enables us to prove the vanishing of the contribution of all but one of the fixed points. 

To describe this formula, we will need the notion of an {\em iterated
  residue} (cf. e.g. \cite{szenes}) at infinity.  Let
$\omega_1,\dots,\omega_N$ be affine linear forms on $\CC^k$; denoting
the coordinates by $z_1,\ldots, z_k$, this means that we can write
$\omega_i=a_i^0+a_i^1z_1+\ldots + a_i^kz_k$. We will use the shorthand
$h(\bz)$ for a function $h(z_1\ldots z_k)$, and $\dbz$ for the
holomorphic $n$-form $dz_1\wedge\dots\wedge dz_k$. Now, let $h(\bz)$
be an entire function, and define the {\em iterated residue at infinity}
as follows:
\begin{equation}
  \label{defresinf}
 \ires \frac{h(\bz)\,\dbz}{\prod_{i=1}^N\omega_i}
  \overset{\mathrm{def}}=\left(\frac1{2\pi i}\right)^k
\int_{|z_1|=R_1}\ldots
\int_{|z_k|=R_k}\frac{h(\bz)\,\dbz}{\prod_{i=1}^N\omega_i},
 \end{equation}
 where $1\ll R_1 \ll \ldots \ll R_k$. The torus $\{|z_m|=R_m;\;m=1 \ldots
 k\}$ is oriented in such a way that $\res_{z_1=\infty}\ldots
 \res_{z_k=\infty}\dbz/(z_1\cdots z_k)=(-1)^k$.
We will also use the following simplified notation: $\sires \overset{\mathrm{def}}=\ires.$

In practice, one way to compute the iterated residue \eqref{defresinf} is the following algorithm: for each $i$, use the expansion
 \begin{equation}
   \label{omegaexp}
 \frac1{\omega_i}=\sum_{j=0}^\infty(-1)^j\frac{(a^{0}_i+a^1_iz_1+\ldots
   +a_{i}^{q(i)-1}z_{q(i)-1})^j}{(a_i^{q(i)}z_{q(i)})^{j+1}},
   \end{equation}
   where $q(i)$ is the largest value of $m$ for which $a_i^m\neq0$,
   then multiply the product of these expressions with $(-1)^kh(z_1\ldots
   z_k)$, and then take the coefficient of $z_1^{-1} \ldots z_k^{-1}$
   in the resulting Laurent series.

We repeat the proof of the following iterated residue theorem from \cite{bsz}.
\begin{proposition}{[\cite{bsz} Proposition 5.4]} For any homogeneous polynomial $Q(\bz)$ on $\CC^k$ we have
\begin{equation}\label{flagres}
\sum_{\sigma\in\sg n/\sg{n-k}}
\frac{Q(\lambda_{\sigma(1)},\ldots ,\lambda_{\sigma(k)})}
{\prod_{1\leq m\leq k}\prod_{i=m+1}^n(\lambda_{\sigma\cdot
    i}-\lambda_{\sigma\cdot m})}=\sires
\frac{\prod_{1\leq m<l\leq k}(z_m-z_l)\,Q(\bz)\dbz}
{\prod_{l=1}^k\prod_{i=1}^n(\lambda_i-z_l)}
\end{equation}
\end{proposition}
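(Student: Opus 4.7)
The idea is to evaluate the right-hand side by iteratively applying the one-variable residue theorem to each of the variables $z_k,z_{k-1},\ldots,z_1$ in turn, and to check that the resulting finite sum coincides term-by-term with the left-hand side. For each $l$ the integrand is a rational function of $z_l$ whose only finite poles are simple and located at $z_l=\lambda_i$, coming from the factor $\prod_{i=1}^n(\lambda_i-z_l)$ in the denominator. Under the sign convention $\res_{z=\infty}dz/z=-1$ used in the paper, I would apply the identity $\res_{z_l=\infty}=-\sum_{\text{finite poles}}\res_{z_l=\cdot}$ one variable at a time, starting with $l=k$ (the variable with the largest modulus in the expansion domain) and working down to $l=1$. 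Each such residue replaces $z_l$ by some $\lambda_{i_l}$ and introduces the Jacobian denominator $\prod_{i\neq i_l}(\lambda_i-\lambda_{i_l})$.

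The crucial observation that makes the answer finite and indexed by $S_n/S_{n-k}$ is that the Vandermonde factor $\prod_{m<l}(z_m-z_l)$ in the numerator automatically forces distinctness of the residue indices. Indeed, once $z_k$ has been specialised to $\lambda_{i_k}$, the factor $(z_{k-1}-\lambda_{i_k})$ appears in the numerator and cancels the prospective pole of the $z_{k-1}$-integrand at $z_{k-1}=\lambda_{i_k}$; by induction the $z_l$-residue is supported on indices $i_l\notin\{i_{l+1},\ldots,i_k\}$. After all $k$ residues are taken, the remaining sum is indexed by ordered $k$-tuples of pairwise distinct indices $(i_1,\ldots,i_k)$ in $\{1,\ldots,n\}$, i.e. precisely by representatives of $\sg{n}/\sg{n-k}$.

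It remains to identify the summand at $\sigma=(i_1,\ldots,i_k)$ with the $\sigma$-term on the left-hand side. The numerator of this summand contributes $\prod_{m<l}(\lambda_{i_m}-\lambda_{i_l})\cdot Q(\lambda_{i_1},\ldots,\lambda_{i_k})$, while the accumulated denominator is $\prod_{l=1}^k\prod_{i\neq i_l}(\lambda_i-\lambda_{i_l})$. Splitting the inner product over $i\neq i_l$ according to whether $i\in\{i_1,\ldots,i_k\}$ or $i\notin\{i_1,\ldots,i_k\}$ yields two copies of the Vandermonde $\prod_{m<l}(\lambda_{i_m}-\lambda_{i_l})$ (one of which acquires the sign $(-1)^{k(k-1)/2}$ when the pairs are reordered) together with the factor $\prod_{l=1}^k\prod_{j\notin\{i_1,\ldots,i_k\}}(\lambda_j-\lambda_{i_l})$. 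One Vandermonde cancels against the numerator factor coming from Step~2, and the remaining Vandermonde combines with the ``outside'' factors to regroup into the triangular product $\prod_{m=1}^k\prod_{j\notin\{\sigma(1),\ldots,\sigma(m)\}}(\lambda_j-\lambda_{\sigma(m)})$, which is exactly the denominator $\prod_{1\leq m\leq k}\prod_{i=m+1}^n(\lambda_{\sigma(i)}-\lambda_{\sigma(m)})$ on the left-hand side.

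\textbf{Main obstacle.} The mathematical content is really contained in Step~2: the Vandermonde numerator is what makes the iterated residue an \emph{algebraic} (rather than transcendental) operation in this situation and cuts the sum down to distinct-index tuples. The rest is bookkeeping, but the bookkeeping is the delicate part — one must track the sign of the residue at infinity, the sign $-1$ in $\res_{z=\lambda}dz/(\lambda-z)$, and the sign $(-1)^{k(k-1)/2}$ that appears when two copies of the Vandermonde are recombined. Provided these are accounted for consistently, the claimed identity reduces to a purely algebraic regrouping of rational expressions and no further analytic input is needed.
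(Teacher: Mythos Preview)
Your proposal is correct and follows essentially the same argument as the paper: both evaluate the iterated residue one variable at a time via the residue theorem on $\PP^1$, use the Vandermonde numerator to cancel the would-be repeated poles so that the sum is supported on distinct-index tuples, and then track the sign $(-1)^{k(k-1)/2}$ against the mismatched orientation of the Vandermonde factors to match the left-hand side. The only cosmetic difference is that the paper displays the intermediate expressions after the first two residues explicitly, whereas you package the final identification via the inside/outside splitting of $\prod_{i\neq i_l}(\lambda_i-\lambda_{i_l})$; the content is the same.
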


\begin{proof}
  We compute the iterated residue \eqref{flagres} using the Residue
  Theorem on the projective line $\CC\cup\{\infty\}$.  The first
  residue, which is taken with respect to $z_k$, is a contour
  integral, whose value is minus the sum of the $z_k$-residues of the
  form in \eqref{flagres}. These poles are at $z_k=\lambda_j$,
  $j=1\ldots n$, and after canceling the signs that arise, we obtain the
  following expression for the right hand side of  \eqref{flagres}:
\[
\sum_{j=1}^n \frac{\prod_{1\leq m<l\leq
    k-1}(z_m-z_l)\,\prod_{l=1}^{k-1}(z_l-\lambda_j)\,Q(z_1\ldots
  z_{k-1},\lambda_j)\;dz_1\dots dz_{k-1}}
{\prod_{l=1}^{k-1}\prod_{i=1}^n(\lambda_i-z_l)\prod_{i\neq
    j}^n(\lambda_i-\lambda_j)}.
\]
After cancellation and exchanging the sum and the residue operation,
at the next step, we have
\[
(-1)^{k-1}\sum_{j=1}^n\res_{z_{k-1}=\infty} \frac{\prod_{1\leq
m<l\leq
    k-1}(z_m-z_l)\,Q(z_1\ldots z_{k-1},\lambda_j)\;dz_1\dots
  dz_{k-1}} {\prod_{i\neq j}^n
  \left((\lambda_i-\lambda_j)\prod_{l=1}^{k-1}(\lambda_i-z_l)\right)}.
\]
Now we again apply the Residue Theorem, with the only difference that
now the pole $z_{k-1}=\lambda_j$ has been eliminated. As a result,
after converting the second residue to a sum, we obtain
\[
(-1)^{2k-3}\sum_{j=1}^n\sum_{s=1,\,s\neq j}^n \frac{\prod_{1\leq
m<l\leq
    k-2}(z_l-z_m)\,Q(z_1\ldots
  z_{k-2},\lambda_s,\lambda_j)\;dz_1\dots dz_{k-2}}
{(\lambda_s-\lambda_j)\prod_{i\neq j,s}^n
  \left((\lambda_i-\lambda_j)(\lambda_i-\lambda_s)\prod_{l=1}^{k-1}(\lambda_i-z_l)\right)}.
\]
Iterating this process, we arrive at a sum very similar to
(\ref{flagloc}). The difference between the two sums will be the
sign: $(-1)^{k(k-1)/2}$, and that the $k(k-1)/2$ factors of the form
$(\lambda_{\sigma(i)}-\lambda_{\sigma(m)})$ with $1\le m<i\le k$ in
the denominator will have opposite signs. These two differences
cancel each other, and this completes the proof.
\end{proof}
\begin{remark}
  Changing the order of the variables in iterated residues, usually,
  changes the result. In this case, however, because all the poles are
  normal crossing, formula \eqref{flagres} remains true no matter in
  what order we take the iterated residues.
\end{remark}

This together with \eqref{flagloc} and \eqref{alphasigmaf} gives
\begin{proposition}\label{propflag} Let $k\le n$ and $\alpha(u)$ a polynomial in $u=c_1(\calo_{\tilde{X}_k}(1))$. Then 
\begin{equation*}
\int_{\tilde{X}_k}\alpha(u)=\sires
\frac{\prod_{1\leq m<l\leq k}(z_m-z_l)\,\alpha_{\ff}(z_1+\ldots +z_k)\dbz}
{\prod_{l=1}^k\prod_{i=1}^n(\lambda_i-z_l)}
\end{equation*}
\end{proposition}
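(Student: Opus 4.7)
The plan is to chain together three results already assembled in the excerpt: Atiyah--Bott localisation on the flag fibration $\mu\colon\tilde{X}_k\to\flag_k(\CC^n)$, the identification \eqref{alphasigmaf} of the fixed-point contribution as a Weyl-group shift of a single polynomial, and the iterated-residue transformation of permutation sums \eqref{flagres}. Concretely, since $\tilde X_k$ fibres over $\flag_k(\CC^n)$ with fibre $\overline{P_{n,k}\cdot p_k}$, I would first push the class $\alpha(u)$ forward through $\mu$ and apply torus localisation on the flag base. The $T$-fixed points of $\flag_k(\CC^n)$ are the coordinate flags $\sigma(\ff)$ indexed by $\sigma\in\sg n/\sg{n-k}$, and the equivariant Euler classes of the tangent spaces at these points give exactly the denominators $\prod_{m=1}^{k}\prod_{i=m+1}^{n}(\lambda_{\sigma(i)}-\lambda_{\sigma(m)})$ appearing in \eqref{flagloc}.

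Next I would use \eqref{alphasigmaf} to identify the numerators: because $u$ restricts at $\sigma(\ff)$ to $\lambda_{\sigma(1)}+\ldots+\lambda_{\sigma(k)}$, and evaluation at a fixed point is a ring homomorphism, the fibre integral $\alpha_{\sigma(\ff)}$ is obtained from $\alpha_\ff$ by the substitution $\lambda_i\mapsto\lambda_{\sigma(i)}$. Hence each summand of the Atiyah--Bott sum is the $\sigma$-shift of one and the same polynomial $\alpha_\ff(\lambda_1+\ldots+\lambda_k)$. By linearity of both sides of the target identity in $\alpha$, we may reduce to the case $\alpha(u)=u^d$, in which case $Q(\bz):=\alpha_\ff(z_1+\ldots+z_k)$ is a homogeneous polynomial of degree $d$ on $\CC^k$. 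At this point the proposition reduces verbatim to the already-proven identity \eqref{flagres} applied with this choice of $Q$, and the two formulations agree after matching the Vandermonde factor $\prod_{m<l}(z_m-z_l)$ in the numerator and the factors $\prod_{l=1}^{k}\prod_{i=1}^{n}(\lambda_i-z_l)$ in the denominator.

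Honestly, there is no real obstacle: the statement is a direct corollary of the two preceding results, and the entire content sits in bookkeeping the correspondence between fixed-point data and residue variables. The assumption $k\le n$ is used only implicitly, to guarantee that the cosets $\sg n/\sg{n-k}$ parametrise genuine $T$-fixed flags and that the parabolic $P_{n,k}$ underlying the partial desingularisation $\tilde{X}_k\to X_k$ is well-defined. The one spot where one might trip up is a sign check arising from the orientation of the residue contour combined with the ordering of factors in the Vandermonde, but the proof of \eqref{flagres} given just above already absorbs these signs, so no extra care is required.
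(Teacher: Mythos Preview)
Your proposal is correct and follows exactly the paper's approach: the paper simply states that Proposition~\ref{propflag} follows by combining \eqref{flagloc}, \eqref{alphasigmaf}, and \eqref{flagres}, which is precisely the chain you describe. One small quibble: your claim that $Q(\bz)=\alpha_\ff(z_1+\cdots+z_k)$ has degree $d$ when $\alpha(u)=u^d$ is not quite right, since $\alpha_\ff$ is a fibre integral and so has lower degree---but this is irrelevant, as \eqref{flagres} applies to any homogeneous $Q$.
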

Following \cite{bsz}, we proceed a second localisation on the fibre 
\[X_{\ff}=\pi^{-1}(\ff)\simeq \overline{P_{k,n}\cdot p_k}\subset \flag_k(\symdot)\]
 to compute $\alpha_\ff(z_1+\ldots +z_k)$. Since $X_{\ff}$
 is invariant under the $T$-action on $\flag_k(\symdot)$, we can apply Rossmann's integration formula, which is explained in $\S 3.1$ of \cite{bsz}, but we sketch the statement here again.  

Let $Z$ be a complex manifold with a holomorphic $T$-action, and let
$M\subset Z$ be a $T$-invariant analytic subvariety with an isolated
fixed point $p\in M^T$. Then one can find local analytic coordinates
near $p$, in which the action is linear and diagonal. Using these
coordinates, one can identify a neighborhood of the origin in $\TT_pZ$
with a neighborhood of $p$ in $Z$. We denote by $\tc_pM$ the part of
$\TT_pZ$ which corresponds to $M$ under this identification;
informally, we will call $\tc_pM$ the $T$-invariant {\em tangent cone}
of $M$ at $p$. This tangent cone is not quite canonical: it depends on
the choice of coordinates; the multidegree of
$\Sigma=\tc_pM$ in $W=\TT_pZ$, however, does not. Rossmann named this
 the {\em equivariant multiplicity of $M$ in $Z$ at $p$}:
\begin{equation}\label{emult}
   \emu_p[M,Z] \overset{\mathrm{def}}= \mdeg{\tc_pM,\TT_pZ}.
\end{equation}

\begin{remark}
In the algebraic framework one might need to pass to the {\em
tangent scheme} of $M$ at $p$ (cf. \cite{fulton}). This is canonically
defined, but we will not use this notion.
\end{remark}
\begin{proposition}[Rossmann's localisation formula \cite{rossmann}] Let $\mu \in H_T^*(Z)$ be an equivariant class represented by a holomorphic equivariant map $\mathfrak{t} \to\Omega^\bullet(Z)$. Then 
\begin{equation}
  \label{rossform}
  \int_M\mu=\sum_{p\in M^T}\frac{\emu_p[M,Z]}{\mathrm{Euler}^T(\TT_pZ)}\cdot\mu^{[0]}(p),
\end{equation}
where $\mu^{[0]}(p)$ is the differential-form-degree-zero component
of $\mu$ evaluated at $p$.  
\end{proposition}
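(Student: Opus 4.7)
The plan is to derive the formula by reducing integration over the singular subvariety $M$ to ambient integration over the smooth manifold $Z$, to which the Atiyah–Bott–Berline–Vergne theorem (Theorem \ref{abbv}) directly applies. The bridge between $M$ and $Z$ is the equivariant Poincar\'e dual $\epd{M,Z}$, which, as recorded in the remark following Definition \ref{defepd}, satisfies the defining property
\[
\int_M \mu \;=\; \int_Z \epd{M,Z}\cdot \mu
\]
for every equivariantly closed form $\mu$ on $Z$ (after choosing, if needed, a compactly supported representative of $\epd{M,Z}$ via a Thom form of a tubular neighbourhood of the smooth locus of $M$). This identity is the only place where the singularities of $M$ enter, and it turns the problem into a localisation computation on the smooth ambient space.

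Assuming, as in Theorem \ref{abbv}, that the fixed point set $Z^T$ is finite, the next step is to apply ABBV to the right hand side:
\[
\int_Z \epd{M,Z}\cdot\mu \;=\; \sum_{p\in Z^T}\frac{\bigl(\epd{M,Z}\cdot\mu\bigr)^{[0]}(p)}{\mathrm{Euler}^T(T_pZ)}.
\]
Since $\epd{M,Z}$ is a polynomial on $\mathfrak{t}$ (a differential-form-degree-zero class), we have $(\epd{M,Z}\cdot\mu)^{[0]}(p)=(\epd{M,Z})^{[0]}(p)\cdot \mu^{[0]}(p)$, so everything reduces to identifying the restriction $(\epd{M,Z})^{[0]}(p)$ at each fixed point.

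The heart of the proof is to show
\[
(\epd{M,Z})^{[0]}(p) \;=\; \emu_p[M,Z]\qquad\text{for }p\in M^T,\qquad (\epd{M,Z})^{[0]}(p)=0\text{ for }p\in Z^T\setminus M.
\]
For the first identity, I would invoke Bochner's linearisation theorem for compact groups (applied to $K=T^n\subset T$): near a fixed point $p$, one can choose $T$-equivariant analytic coordinates identifying a neighbourhood of $p$ in $Z$ with a neighbourhood of $0$ in $T_pZ$, and under this identification $M$ corresponds to its $T$-invariant tangent cone $\tc_pM$. Equivariant Poincar\'e duality is a \emph{local} invariant (Definition \ref{defepd} is given by integration of a compactly supported Thom form against the variety, and Proposition \ref{mdepd} equates it with the multidegree of the local ideal), so $(\epd{M,Z})^{[0]}(p)$ equals $\mdeg{\tc_pM,T_pZ}$, which by \eqref{emult} is precisely $\emu_p[M,Z]$. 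For the second identity, if $p\notin M$ then $M$ is empty in a neighbourhood of $p$, the local ideal is the unit ideal, and the multidegree of the empty subvariety is zero; hence such points drop out of the sum, leaving only $p\in M^T$.

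The main obstacle is the local identification step: one must ensure that the equivariant Darboux/Bochner linearisation is genuinely compatible with $\epd{\cdot,\cdot}$, i.e.\ that passing to the tangent cone in linearised coordinates does not alter the equivariant Poincar\'e dual. This is the content of the deformation invariance axiom in Proposition \ref{epdprops} combined with the flat degeneration of $M$ to its $T$-invariant tangent cone in $T_pZ$ (for example, via the Rees-algebra deformation of the local ideal of $M$ at $p$), which is $T$-equivariant and hence preserves $\epd{\cdot,\cdot}$. Once this is in place, combining the three displayed equations gives \eqref{rossform} immediately.
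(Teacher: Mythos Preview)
The paper does not prove this proposition: it is stated with a citation to Rossmann \cite{rossmann} and used as a black box. So there is no ``paper's own proof'' to compare against.

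Your sketch is the standard route to Rossmann's formula and is essentially correct. A few remarks on points that would need tightening in a full proof. First, the paper only defines $\epd{\Sigma,W}$ for $\Sigma$ inside a \emph{vector space} $W$ (Definition \ref{defepd}); you are tacitly using the global equivariant fundamental class $[M]_T\in H^*_T(Z)$ of a possibly singular closed subvariety in a manifold, together with the integration identity $\int_M\mu=\int_Z [M]_T\cdot\mu$. That class exists and the identity holds, but it is an extension of what the paper has set up, not something already available in the text. Second, your argument for the vanishing at $p\in Z^T\setminus M$ via ``the multidegree of the empty subvariety is zero'' is not quite in the paper's framework (multidegree is defined for ideals of a fixed codimension); the clean statement is that $[M]_T$ may be represented by a form supported in an arbitrarily small neighbourhood of $M$, hence restricts to $0$ at any fixed point off $M$ --- which you in fact allude to earlier with the Thom-form remark. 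Third, you assume $Z^T$ finite when invoking Theorem \ref{abbv}; in the paper's application $Z=\flag_k^*(\symdot)$ this holds, but Rossmann's original statement does not require it, so strictly speaking your derivation covers the case used here rather than the general proposition. With these caveats, the reduction ``global Poincar\'e dual $\Rightarrow$ ABBV on $Z$ $\Rightarrow$ local identification of the restriction with $\emu_p[M,Z]$ via linearisation and deformation to the tangent cone'' is exactly how this formula is proved.
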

In \cite{bsz} we apply this formula with $M=X_\ff, Z=\flag_k^*(\symdot)$ and $\mu=$\\$\mathrm{Thom}(\flag_k^*)$, the equivariant Thom class of $\flag_k^*(\symdot)$ where 
\[\flag_k^*(\symdot)=\{V_1 \subset \ldots \subset V_k\subset \symdot: \dim(V_i)=i, V_i\subset \mathrm{Span}_\CC(e_\tau:\Sigma \tau \le i)\}\]
is a submanifold of $\flag_k(\symdot)$.

Here we apply the Rossman formula for $M=X_\ff, Z=\flag_k^*(\symdot)$ and $\mu=\alpha_\ff$. The fixed points on $Z=\flag_k^*(\symdot)$ are parametrized by {\it admissible} sequences of partitions $\bipi=(\pi_1,\ldots, \pi_k)$. We call a sequence of partitions $\bipi=(\pi_1 \ldots \pi_k)\in\Pi^{\times d}$ admissible if
\begin{enumerate}
\item $\Sigma \pi_l\le l$ for $1\le l \le k$, and 
\item $\pi_l\neq\pi_m$ for $1\leq l\neq m\leq k$. 
\end{enumerate}
We will denote the set of admissible sequences of length $k$ by $\Bipi$. Then \eqref{rossform} and Proposition \ref{propflag} give us (see \cite{bsz}) 
\begin{proposition}\label{propint} Let $k\le n$ and $\alpha(u)$ a polynomial in $u=c_1(\calo_{\tilde{X}_k}(1))$. Then 
\begin{equation}\label{intnumberone} 
\int_{\tilde{X}_k}\alpha=\sum_{\bipi\in\Bipi} \sires \frac{
Q_\bipi(\bz)\,\prod_{m<l}(z_m-z_l) \alpha(z_{\pi_1}+\ldots +z_{\pi_k})}{
\prod_{l=1}^k\prod_{\tau\leq l}^{\tau\neq\pi_1\ldots \pi_l}
(z_{\tau}-z_{\pi_l})  \prod_{l=1}^k\prod_{i=1}^n(\lambda_i-z_l)} \,\dbz.
  \end{equation}
where $Q_\bipi(\bz)=\emu_\bipi[X_\ff,\flag_k^*]$ and $z_\pi=\sum_{i\in \pi}z_i$.
\end{proposition}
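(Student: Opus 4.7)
The plan is to derive the formula by a second, fibrewise localisation on top of Proposition~\ref{propflag}. That proposition already expresses $\int_{\tilde X_k}\alpha(u)$ as an iterated residue whose integrand involves the scalar $\alpha_\ff(z_1+\ldots+z_k)$, where $\alpha_\ff=(\int_{X_\ff}\alpha)^{[0]}(\ff)$. It therefore suffices to compute $\int_{X_\ff}\alpha$ by applying Rossmann's localisation formula \eqref{rossform} to the embedding $X_\ff\hookrightarrow\flag_k^*(\symdot)$, and then to substitute the result back into Proposition~\ref{propflag}.

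First I would classify the $T$-fixed points of the ambient variety $\flag_k^*(\symdot)$. Since $\symdot=\bigoplus_{i=1}^k\sym^i\CC^n$ has a $T$-eigenbasis $\{e_\tau\}$ indexed by partitions $\tau$, with weight $\sum_{i\in\tau}\lambda_i$, and since the defining condition of $\flag_k^*$ forces $V_l\subset\mathrm{Span}(e_\tau:\Sigma\tau\le l)$, the $T$-fixed flags are exactly those of the form $(\langle e_{\pi_1}\rangle\subset\langle e_{\pi_1},e_{\pi_2}\rangle\subset\ldots\subset\langle e_{\pi_1},\ldots,e_{\pi_k}\rangle)$ for an admissible sequence $\bipi=(\pi_1,\ldots,\pi_k)\in\Bipi$; the two conditions defining $\Bipi$ encode dimension compatibility and the distinctness of the $\pi_l$.

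Next I compute the three ingredients of Rossmann's formula at each fixed point $\bipi$. The tangent space splits canonically as $T_\bipi\flag_k^*(\symdot)=\bigoplus_{l=1}^k\Hom(\langle e_{\pi_l}\rangle,\mathrm{Span}(e_\tau:\Sigma\tau\le l)/V_l)$, so its equivariant Euler class reads $\prod_{l=1}^k\prod_{\tau\le l,\;\tau\neq\pi_1,\ldots,\pi_l}(z_\tau-z_{\pi_l})$ after identifying the basic weights of the torus acting on $X_\ff$ with $z_1,\ldots,z_k$ and setting $z_\tau=\sum_{i\in\tau}z_i$. The tautological line bundle $\calo(1)$ has weight $z_{\pi_1}+\ldots+z_{\pi_k}$ at $\bipi$, so $\alpha^{[0]}(\bipi)=\alpha(z_{\pi_1}+\ldots+z_{\pi_k})$. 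Finally, the equivariant multiplicity $\emu_\bipi[X_\ff,\flag_k^*(\symdot)]$ is by definition $Q_\bipi(\bz)$. Substituting these into \eqref{rossform} yields an explicit expression for $\alpha_\ff(z_1+\ldots+z_k)$ as a sum over $\bipi\in\Bipi$, and feeding this into Proposition~\ref{propflag} gives \eqref{intnumberone} after elementary rearrangement.

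The main technical obstacle I anticipate is the compatibility of the two localisations: the variables $z_1,\ldots,z_k$ simultaneously play the role of dummy residue variables in the outer iterated residue \emph{and} of basic torus weights in the second localisation on $X_\ff$. One must justify that the tangent-cone weights $z_\tau-z_{\pi_l}$ produced by Rossmann's formula may legitimately be substituted under the outer residue contour, and that the individual fixed-point contributions assemble into a single Laurent-expandable rational function in the region $z_1\ll\ldots\ll z_k$. A related subtlety is the role of the hypothesis $k\le n$: this is precisely what guarantees that the $T$-fixed locus on $\flag_k(\CC^n)$ is the discrete set enumerated in \eqref{flagloc}, so that the outer localisation in Proposition~\ref{propflag} applies cleanly and the two steps may be composed.
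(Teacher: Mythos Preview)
Your proposal is correct and follows essentially the same route as the paper: start from Proposition~\ref{propflag}, apply Rossmann's formula \eqref{rossform} to $X_\ff\subset\flag_k^*(\symdot)$, identify the $T$-fixed points with admissible sequences $\bipi\in\Bipi$, compute $\mathrm{Euler}^T(T_\bipi\flag_k^*)$, $\alpha^{[0]}(\bipi)$, and $Q_\bipi=\emu_\bipi[X_\ff,\flag_k^*]$, and substitute back. The paper in fact gives fewer details than you do, deferring the tangent-space and fixed-point computations to \cite{bsz}; your explicit description of $T_\bipi\flag_k^*$ and its weights is exactly what is needed. Your remark on the dual role of the $z_l$ is apt: in the paper's logic the second localisation on $X_\ff$ is carried out with respect to the genuine torus weights $\lambda_1,\ldots,\lambda_k$ (only the first $k$ matter on $\flag_k^*$), producing $\alpha_\ff$ as a rational function of $\lambda_1,\ldots,\lambda_k$; the passage to $z_1,\ldots,z_k$ then happens via \eqref{alphasigmaf} and \eqref{flagres}, which is precisely how the outer residue absorbs the fibrewise answer.
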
  
%If $\alpha$ represents the cohomology class $R(u,\tilde{\pi}^*h)\in H^*(\tilde{\calx}_k)$, where $R$ is %a homogeneous polynomial of degree $\mathrm{dim}\tilde{\calx}_k=n+k(n-1)$, then integrating on %$X$ and using the fact that $\tilde{pi}_*\tilde{\pi}^*h=h$ results 
%\begin{equation} \label{intnumberone}
%\int_{\tilde{\calx}_k}\alpha=\int_X \sum_{\bipi\in\Bipi} \sires \frac{
%Q_\bipi(\bz)\,\prod_{m<l}(z_m-z_l) R(z_{\pi_1}+\ldots +z_{\pi_k},h)}{
%\prod_{l=1}^k\prod_{\tau\leq l}^{\tau\neq\pi_1\ldots \pi_l}
%(z_{\tau}-z_{\pi_l})  \prod_{l=1}^k\prod_{i=1}^n(\lambda_i-z_l)} \,\dbz, 
%  \end{equation}
%where integration over $X$ on the right hand side means the substitution $h^n=d$.
The following theorem is a stronger version of the main theorem Proposition 6.1 in \cite{bsz} for tautological integrals. We devote the next section to the proof. 
\begin{theorem}[\textbf{The Residue Vanishing Theorem}]\label{vanishtheorem} Let $k\le n$ and let $\alpha \in \Omega^{k(n-1)}(\tilde{X}_k)$ be a top from. Then 
\begin{enumerate}
\item All terms but the one corresponding to $\bipi_\dist=([1],[2],\ldots, [k])$ vanish in \eqref{intnumberone} leaving us with
\begin{equation} \label{intnumberoneandhalf}
\int_{\tilde{X}_k}\alpha=\sires \frac{
Q_{[1],\ldots, [k]}(\bz)\,\prod_{m<l}(z_m-z_l) \alpha(z_1+\ldots +z_k,h)}{
\prod_{\um \tau \le l \le k}
(z_\tau-z_l)  \prod_{l=1}^k\prod_{i=1}^n(\lambda_i-z_l)} \,\dbz.
  \end{equation} 
 \item If $|\tau|\ge 3$ then $Q_{([1],\ldots, [k])}(\bz)$ is divisible by $z_\tau-z_l$ for all $l \ge \um \tau$, so we arrive at the simplified formula
\begin{equation} \label{intnumberthree}
\int_{\tilde{X}_k}\alpha=\sires \frac{
Q_k(\bz)\,\prod_{m<l}(z_m-z_l) \alpha(z_1+\ldots +z_k,h)}{
\prod_{m+r \le l \le k}
(z_m+z_r-z_l)  \prod_{l=1}^k\prod_{i=1}^n(\lambda_i-z_l)} \,\dbz.
  \end{equation}
\end{enumerate}
\end{theorem}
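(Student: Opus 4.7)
The plan is to follow the strategy of the residue vanishing theorem in \cite{bsz}, strengthening it by means of the sharper multidegree bound in Proposition \ref{vanishlemma} in order to handle arbitrary top-degree forms $\alpha$ rather than only Thom classes. Recall from \eqref{defresinf} that the iterated residue at infinity picks out the coefficient of $z_1^{-1}\cdots z_k^{-1}$ in the Laurent expansion in $|z_1|\ll\ldots\ll|z_k|$, so vanishing of a $\bipi$-term reduces to showing that, with respect to some variable $z_l$, the total rational form decays faster than $z_l^{-1}$.

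\textbf{Step 1 (sharpened degree bound on $Q_\bipi$).} By definition $Q_\bipi(\bz)=\mdeg{\tc_\bipi X_\ff,\TT_\bipi \flag_k^*(\symdot)}$. The ambient tangent space carries a diagonal $T$-action whose weights are explicit linear combinations of $z_1,\ldots,z_k$, readable off from the Schubert-cell structure at $\bipi$. Applying Proposition \ref{vanishlemma} in each variable $z_l$ separately yields $\deg_{z_l}Q_\bipi\le d_{\bipi,l}-1$, where $d_{\bipi,l}$ is the number of tangent weights actually involving $z_l$. This $-1$ improvement over the crude weight count is the decisive new input, absent in \cite{bsz}.

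\textbf{Step 2 (Part (1): vanishing for $\bipi\neq\bipi_{\dist}$).} Fix $\bipi\neq\bipi_{\dist}$ and let $l_0$ be the smallest index with $\pi_{l_0}\neq[l_0]$. Taking $z_{l_0}$ as the outermost residue variable (legitimate because the poles are normal crossing in the indicated domain), I would carry out a direct numerator-minus-denominator degree count in the rational form of \eqref{intnumberone}, combining Step 1 with the hypothesis $\deg\alpha=k(n-1)$. The admissibility constraints ($\Sigma\pi_l\le l$ and the $\pi_l$ distinct) together with the choice of $l_0$ force the degree in $z_{l_0}$ to be strictly less than $-1$, so the outermost one-variable residue at $\infty$ vanishes, annihilating the whole term. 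This leaves only the $\bipi_{\dist}$-contribution, which is precisely \eqref{intnumberoneandhalf}.

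\textbf{Step 3 (Part (2): divisibility of $Q_{[1],\ldots,[k]}$).} The divisibility of $Q_{[1],\ldots,[k]}(\bz)$ by $z_\tau-z_l$ for every partition $\tau$ with $\um\tau\ge 3$ and every $l\ge\um\tau$ reduces to a local statement about the tangent cone $\tc_{\bipi_{\dist}}X_\ff\subset\TT_{\bipi_{\dist}}\flag_k^*(\symdot)$. At $\bipi_{\dist}$ the fibre of $\overline{P_{n,k}\cdot p_k}$ is cut out by the explicit subspaces described in \eqref{sgamma}; the resulting defining equations contain, for each such pair $(\tau,l)$, a coordinate generator of $T$-weight $z_\tau-z_l$ arising from the redundancy relations between iterated $v$-products at depth $\ge 3$. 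By the elimination property in Proposition \ref{epdprops}, the multidegree is divisible by that weight. Cancelling these common factors with the corresponding $(z_\tau-z_l)$ in the denominator of \eqref{intnumberoneandhalf} leaves only the factors with $\um\tau=2$, i.e.\ of the form $(z_m+z_r-z_l)$, which is exactly \eqref{intnumberthree}. The main technical obstacle I expect is Step 1: pinning down the precise weight multiplicities on $\TT_\bipi\flag_k^*(\symdot)$ and verifying carefully that Proposition \ref{vanishlemma} yields the $-1$ improvement in each variable — this extra unit of decay is exactly what is needed to make the vanishing work for an arbitrary polynomial $\alpha$ of top degree, rather than just for the Thom class as in \cite{bsz}.
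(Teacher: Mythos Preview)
Your Step 1 identifies the right new ingredient (Proposition \ref{vanishlemma}), but Step 2 has a genuine gap that would make the argument fail.

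First, the reordering is not legitimate. The remark after \eqref{flagres} about normal-crossing poles refers specifically to that formula, where every denominator factor $\lambda_i-z_l$ contains a single $z$-variable. In the $\bipi$-term of \eqref{intnumberone} the factors $z_\tau-z_{\pi_l}$ (e.g.\ $z_1+z_2-z_{\pi_3}$) mix several $z$'s, so the iterated residue \emph{does} depend on the order; you cannot simply promote $z_{l_0}$ to an extremal position. Second, and more seriously, a bare degree count in one variable is not enough when denominator factors mix variables: the paper's own cautionary example (the computation following \eqref{2res}) shows a two-variable residue that does \emph{not} vanish despite the na\"ive single-variable degree condition being satisfied. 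This is precisely why Proposition \ref{vanishprop} has a second option requiring $\deg(q;l)=\lead(q;l)$.

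The paper's argument proceeds in the opposite direction from yours. It works from $z_k$ downwards, showing inductively that $\pi_k=[k],\pi_{k-1}=[k-1],\ldots$: assuming $\pi_{l+1}=[l+1],\ldots,\pi_k=[k]$ and $\pi_l\neq[l]$, it (i) cancels the Vandermonde factors $z_l-z_s$, $s>l$, against the numerator; (ii) invokes Proposition \ref{divisible} (Prop.~7.4 of \cite{bsz}) to see that every remaining denominator factor $z_\tau-z_s$ with $l\in\tau$, $|\tau|>1$, $s>l$ actually \emph{divides} $Q_\bipi$, hence can be cancelled; (iii) only then applies Proposition \ref{vanishlemma} to the quotient. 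After these cancellations no surviving factor containing $z_l$ also contains any $z_s$ with $s>l$, so the second option of Proposition \ref{vanishprop} applies and the degree comparison $k-1<n+1$ finishes the step. Your sketch never invokes Proposition \ref{divisible} in Step 2, and without that cancellation the $\lead=\deg$ hypothesis fails and the vanishing does not follow. Your Step 3 is closer in spirit to the paper (which simply cites Proposition \ref{divisible} again for the $|\tau|\ge 3$ case), but the argument via ``redundancy relations'' and elimination would need to be made precise.
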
 

\begin{remark}\label{remarkq}
\begin{enumerate}
\item 
The geometric meaning of $Q_k(\bz)$ in \eqref{intnumberthree} is the following, see \cite{bsz} Theorem 6.16. Let $T_k\subset B_k\subset \GL(k)$ be the subgroups of invertible
  diagonal and upper-triangular matrices, respectively; denote the
  diagonal weights of $T_k$ by $z_1, \ldots, z_k$.  Consider the $\GL(k)$-module of 3-tensors $\Hom(\CC^k,\sym^2\CC^k)$; identifying the
  weight-$(z_m+z_r-z_l)$ symbols $q^{mr}_l$ and $q^{rm}_l$, we can
  write a basis for this space as follows:
\[ \Hom(\CC^k,\sym^2\CC^k)=\bigoplus \CC q^{mr}_l,\;  1\leq m,r,l \leq k.
\]
Consider the point $\epsilon=\sum_{m=1}^k\sum_{r=1}^{k-m}q_{mr}^{m+r}$
in  the $B_k$-invariant subspace
\begin{equation*}
  \label{nhmodule}
    N_k = \bigoplus_{1\leq m+r\leq l\leq k} \CC q^{mr}_l\subset
\Hom(\CC^k,\sym^2\CC^k).
\end{equation*}
Set the notation $\OO_k$ for the orbit closure
$\overline{B_k\epsilon}\subset N_k$, then $Q_k(\bz)$ is the $T_k$-equivariant
Poincar\'e dual $Q_k(\bz) = \epd{\OO_k,N_k}_{T_k}$,
which is a homogeneous polynomial of degree
$\dim(N_k)-\dim(\OO_k).$. For small $k$ these polynomials are the following (see \cite{bsz} $\S7$):
\[Q_2=Q_3=1, Q_4=2z_1+z_2-z_4, Q_5=2z_1^2 +3z_1z_2-2z_1z_5+2z_2z_3-z_2z_4-z_2z_5-z_3z_4+z_4z_5.\] 
\item To understand the significance of this vanishing theorem we note that while the fixed point set $\Bipi$ on $\flag_k^*(\symdot)$ is well understood, it is not clear which of these fixed points sit in $X_{\mathbf{f}}$. But we have enough information to prove that none of those fixed points in $X_{\mathbf{f}}$ contribute to the iterated residue except for the distinguised fixed point $\bipi_\dist=([1],[2],\ldots, [k])$.
\item Theorem \ref{maintechnical} follows by substituting $\frac{1}{\prod_{i=1}^n(\l_i-z_j)}=\frac{1}{z_j^nc(1/z_j)}=\frac{s(1/z_j)}{z_j^n}$ for $ j=1,\ldots, k$ followed by integration over $X$ on both sides of \eqref{intnumberthree}.
\end{enumerate}
\end{remark}

\subsection{The vanishing of residues}
\label{subsec:vanres}

In this subsection following \cite{bsz} $\S6.2$ we describe the conditions under which iterated
residues  of the type appearing in the sum in \eqref{intnumberone}
vanish and we prove Theorem \ref{vanishtheorem}.

We start with the 1-dimensional case, where the residue at infinity
is defined by \eqref{defresinf} with $d=1$. By bounding the integral
representation along a contour $|z|=R$ with $R$ large, one can
easily prove
\begin{lemma}\label{1lemma}
  Let $p(z),q(z)$ be polynomials of one variable. Then
\[\res_{z=\infty}\frac{p(z)\,dz}{q(z)}=0\quad\text{if }\deg(p(z))+1<\deg(q).
\]
\end{lemma}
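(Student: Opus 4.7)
The plan is to exploit the integral representation of the residue at infinity directly, as suggested by the hint preceding the lemma. By definition,
\[
\res_{z=\infty}\frac{p(z)\,dz}{q(z)} = \frac{1}{2\pi i}\oint_{|z|=R}\frac{p(z)\,dz}{q(z)},
\]
with the orientation fixed so that $\res_{z=\infty}dz/z = -1$, and with $R$ chosen large enough that all zeros of $q$ lie strictly inside $|z|<R$. The first observation is that this integral is independent of $R$ for all sufficiently large $R$, since no pole of $p/q$ is crossed as $R$ grows.

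Next, I would estimate $|p(z)/q(z)|$ on the circle $|z|=R$. Writing $\deg(p)=a$ and $\deg(q)=b$, the hypothesis gives $b-a\ge 2$. For $R$ large, standard bounds give $|p(z)| \le C_1 R^{a}$ and $|q(z)| \ge C_2 R^{b}$ for constants $C_1,C_2>0$ depending only on the coefficients of $p$ and $q$. Therefore
\[
\left|\frac{1}{2\pi i}\oint_{|z|=R}\frac{p(z)\,dz}{q(z)}\right|
\le \frac{1}{2\pi}\cdot 2\pi R\cdot \frac{C_1 R^{a}}{C_2 R^{b}}
= \frac{C_1}{C_2}\,R^{\,1+a-b}.
\]

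Since $1+a-b \le -1 < 0$, the right-hand side tends to $0$ as $R\to\infty$. Combined with the fact that the left-hand side is independent of $R$, this forces the residue to vanish, proving the lemma. There is no real obstacle here; the only subtlety is to justify the $R$-independence, which follows from the residue theorem applied to any annulus whose inner radius already encloses all poles of $p/q$. An equivalent, purely algebraic argument would be to note that the Laurent expansion of $p(z)/q(z)$ at $z=\infty$ begins with a term of order $z^{a-b}$ where $a-b\le -2$, so the coefficient of $z^{-1}$ is automatically zero; either viewpoint yields the same conclusion.
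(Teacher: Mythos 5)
Your proof is correct and follows exactly the route the paper indicates (bounding the contour integral on $|z|=R$ and letting $R\to\infty$); the paper itself only states this as a one-line hint without writing out the estimate. The additional remark about reading off the vanishing directly from the Laurent expansion at infinity is a valid equivalent formulation.
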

Consider now the multidimensional situation. Let $p(\bz),q(\bz)$ be
polynomials in the $k$ variables $z_1\ldots z_k$, and assume that
$q(\bz)$ is the product of linear factors $q=\prod_{i=1}^N L_i$, as
in \eqref{intnumberone}. We continue to use the notation $\dbz=dz_1\dots dz_k$.
We would like to formulate conditions under which the iterated
residue
\begin{equation}
  \label{ires}
\ires\frac{p(\bz)\,\dbz}{q(\bz)}
\end{equation}
vanishes. Introduce the following notation:
\begin{itemize}\label{notations}
\item For a set of indices $S\subset\{1\ldots k\}$, denote by $\deg(p(\bz);S)$
  the degree of the one-variable polynomial $p_S(t)$ obtained from $p$
  via the substitution $z_m\to
  \begin{cases}
t\text{ if }m\in S,\\ 1\text{ if }m\notin S.
  \end{cases}$
\item For a nonzero linear function $L=a_0+a_1z_1+\ldots +a_kz_k$,
  denote by $\coeff(L,z_l)$ the coefficient $a_l$;
\item finally, for $1\leq m\leq k$, set
  \[\lead(q(\bz);m)=\#\{i;\;\max\{l;\;\coeff(L_i,z_l)\neq0\}=m\},\]
which is the number of those factors $L_i$ in which the coefficient
of $z_m$ does not vanish, but the coefficients of $z_{m+1},\ldots,
z_k$ are $0$.
\end{itemize}
Thus we group the $N$ linear factors of $q(\bz)$ according to the
nonvanishing coefficient with the largest index; in particular, for
$1\leq m\leq k$ we have
\[   \deg(q(\bz);m)\geq\lead(q(\bz);m),\, \text{ and } \sum_{m=1}^k\lead(q(\bz);m)=N.
\]
Now applying Lemma \ref{1lemma} to the first residue in
\eqref{ires}, we see that
\[ \res_{z_k=\infty}\frac{p(z_1,\ldots,z_{k-1},z_k)\dbz}{q(z_1,\ldots ,z_{k-1},z_k)}=0
\]
whenever $\deg(p(\bz);k)+1<\deg(q(\bz),k)$; in this case, of course,
the entire iterated residue \eqref{ires} vanishes.

Now we suppose the residue with respect to $z_k$ does not vanish,
and we look for conditions of vanishing of the next residue:
\begin{equation}
  \label{2res}
\res_{z_{k-1}=\infty}\res_{z_k=\infty}\frac{p(z_1,\ldots,z_{k-2},z_{k-1},z_k)\dbz}
{q(z_1,\ldots,z_{k-2},z_{k-1},z_k)}.
\end{equation}
 Now the condition $\deg(p(\bz);k-1)+1<\deg(q(\bz),k-1)$ will
{\em insufficient};  for example,
\begin{equation}
\res_{z_{k-1}=\infty}\res_{z_k=\infty}\frac{kz_{k-1}kz_k}{z_{k-1}(z_{k-1}+z_k)}=
\res_{z_{k-1}=\infty}\res_{z_k=\infty}\frac{kz_{k-1}kz_k}{z_{k-1}z_k}
\left(1-\frac{z_{k-1}}{z_k}+\ldots\right)=1.
\end{equation}
After performing the expansions \eqref{omegaexp} to $1/q(\bz)$, we
obtain a Laurent series with terms $z_1^{-i_1}\ldots z_k^{-i_k}$ such that
$i_{k-1}+i_k\geq\mathrm{deg}(q(z);k-1,k)$, hence the condition
\begin{equation}\label{toprove}
\deg(p(\bz);k-1,k)+2<\deg(q(\bz);k-1,k)
\end{equation}
will suffice for the vanishing of \eqref{2res}.

There is another way to ensure the vanishing of \eqref{2res}: suppose
that for $i=1\ldots N$, every time we have $\coeff(L_i,z_{k-1})\neq0$,
we also have $\coeff(L_i,z_{k})=0$, which is equivalent to the
condition $\deg(q(\bz),k-1)=\lead(q(\bz);k-1)$.  Now the Laurent
series expansion of $1/q(\bz)$ will have terms $z_1^{-i_1}\ldots
z_k^{-i_k}$ satisfying $i_{k-1}\geq
\deg(q(\bz),k-1)=\lead(q(\bz);k-1)$, hence, in this case the vanishing
of \eqref{2res} is guaranteed by $
\deg(p(\bz),k-1)+1<\deg(q(\bz),k-1)$.  This argument easily
generalises to the following statement.
\begin{proposition}
  \label{vanishprop}
Let $p(\bz)$ and $q(\bz)$ be polynomials in the variables $z_1\ldots 
z_k$, and assume that $q(\bz)$ is a product of linear factors:
$q(\bz)=\prod_{i=1}^NL_i$; set $\dbz=dz_1\dots dz_k$. Then
\[ \ires\frac{p(\bz)\dbz}{q(\bz)} = 0
\]
if for some $l\leq k$, either of the following two options hold:
\begin{itemize}
\item $\deg(p(\bz);k,k-1,\dots,l)+k-l+1<\deg(q(\bz);k,k-1,\dots,l),$
\\ or
\item  $\deg(p(\bz);l)+1<\deg(q(\bz);l)=\lead(q(\bz);l)$.
\end{itemize}
\end{proposition}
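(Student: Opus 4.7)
The plan is to prove both vanishing criteria directly by analyzing the Laurent series expansion of $p(\bz)/q(\bz)$ in the Reinhardt domain $1\ll|z_1|\ll\cdots\ll|z_k|$ where the iterated residue is defined. Using the algorithm described after \eqref{defresinf}, the iterated residue equals $(-1)^k$ times the coefficient of $z_1^{-1}\cdots z_k^{-1}$ in the Laurent series obtained by expanding each $1/L_i$ via \eqref{omegaexp} (so that each factor is developed around its highest-index variable $z_{q(i)}$) and then multiplying by $p(\bz)$. The strategy for both bullets is the same: in each case I would exhibit a set of variables in which every monomial of $p(\bz)/q(\bz)$ has too negative a degree to be compatible with $z_1^{-1}\cdots z_k^{-1}$.

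For the first criterion, I would examine the total degree in the variables $z_l, z_{l+1},\ldots, z_k$. A term in the expansion of a single factor $1/L_i$ has the form $z_{q(i)}^{-(j+1)}$ times a monomial in $z_1,\ldots, z_{q(i)-1}$ of total degree at most $j$. If $q(i)\ge l$ the contribution of this factor to the $(z_l,\ldots,z_k)$-total degree is at most $-(j+1)+j=-1$; if $q(i)<l$ the contribution is $0$. Multiplying across all factors, and noting that by definition the number of $i$ with $q(i)\ge l$ is exactly $\deg(q(\bz);k,k-1,\ldots,l)$, every monomial of $1/q(\bz)$ has total $(z_l,\ldots,z_k)$-degree at most $-\deg(q(\bz);k,k-1,\ldots,l)$. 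After multiplying by $p(\bz)$, whose total degree in these variables is at most $\deg(p(\bz);k,k-1,\ldots,l)$, the first hypothesis forces every monomial to have $(z_l,\ldots,z_k)$-degree strictly less than $-(k-l+1)$, whereas the target $z_1^{-1}\cdots z_k^{-1}$ has $(z_l,\ldots,z_k)$-degree exactly $-(k-l+1)$. So its coefficient vanishes.

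The second criterion isolates the $z_l$-degree alone. The hypothesis $\deg(q(\bz);l)=\lead(q(\bz);l)$ means every factor $L_i$ with $\coeff(L_i,z_l)\neq 0$ actually satisfies $q(i)=l$. I would split the factors according as $q(i)<l$, $q(i)=l$, or $q(i)>l$. For $q(i)<l$ the expansion \eqref{omegaexp} involves only $z_1,\ldots, z_{q(i)-1}$ and so contributes $0$ to the $z_l$-degree; for $q(i)=l$ it contributes $-(j+1)\le -1$; for $q(i)>l$ the assumption $\coeff(L_i,z_l)=0$ makes $z_l$ absent from the numerator as well, so the contribution is again $0$. Summing, the $z_l$-degree of any monomial of $1/q(\bz)$ is at most $-\lead(q(\bz);l)=-\deg(q(\bz);l)$, and after multiplication by $p(\bz)$ the second hypothesis forces this degree to be strictly less than $-1$. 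So once more the coefficient of $z_1^{-1}\cdots z_k^{-1}$ vanishes.

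The argument is essentially a bookkeeping exercise, so no single step is genuinely hard. The main point to be cautious about is ensuring that the expansion \eqref{omegaexp} is the right one for the prescribed order of residues, i.e.\ that expanding each $1/L_i$ around its highest-index variable really does compute the iterated residue in the domain $|z_1|\ll\cdots\ll|z_k|$, and that the degree bounds above cannot be accidentally violated by cancellation across factors. Both follow from the explicit geometric-series form of \eqref{omegaexp}: each term of the product is a genuine monomial (before any collection), so an upper bound valid factor-by-factor is valid for the product. The only obstacle, such as it is, is to present the two-index combinatorics cleanly enough that the reader sees immediately which variables are being tracked.
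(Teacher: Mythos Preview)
Your proposal is correct and follows essentially the same approach as the paper's own argument: expand each $1/L_i$ via \eqref{omegaexp}, then for the first criterion track the total degree in $z_l,\ldots,z_k$ and for the second track the $z_l$-degree alone, showing in each case that no monomial can hit $z_1^{-1}\cdots z_k^{-1}$.

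One small inaccuracy worth fixing: the claim that the number of $i$ with $q(i)\ge l$ equals $\deg(q(\bz);k,\ldots,l)$ is not quite right in general, only $\ge$ holds (e.g.\ $L_i=z_l-z_{l+1}$ has $q(i)=l+1\ge l$ but contributes $0$ after the substitution $z_l,z_{l+1}\mapsto t$). This does not affect your argument, since the inequality goes the right way: your bound on the $(z_l,\ldots,z_k)$-degree of $1/q$ is actually $\le -\#\{i:q(i)\ge l\}\le -\deg(q;k,\ldots,l)$, which still gives the needed conclusion. Similarly, in the second criterion you wrote that for $q(i)<l$ the expansion involves only $z_1,\ldots,z_{q(i)-1}$; it also involves $z_{q(i)}$ in the denominator, but your conclusion that $z_l$ is absent is unaffected.
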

Note that for the second option, the equality
$\deg(q(\bz);l)=\lead(q(\bz);l)$ means that
  \begin{equation}
    \label{op2cond}
\text{ for each }i=1\ldots N\text{ and }m>l,\,
\coeff(L_i,z_{l})\neq0\text{ implies }\coeff(L_i,z_{m})=0.
  \end{equation}

Recall that our goal is to show that all the terms of the sum in
\eqref{intnumberone} vanish except for the one corresponding to
$\bipi_\dist=([1]\ldots  [k])$. Let us apply our new-found tool,
Proposition \ref{vanishprop}, to the terms of this sum, and see what
happens.

Fix a sequence $\bipi=(\pi_1,\dots,\pi_k)\in\Bipi$, and consider the
iterated residue corresponding to it on the right hand side of
\eqref{intnumberone}. The expression under the residue is the product of
two fractions:
\[\frac{p(\bz)}{q(\bz)}=\frac{p_1(\bz)}{q_1(\bz)}\cdot\frac{p_2(\bz)}{q_2(\bz)},\]
where
\begin{equation}
  \label{pq}
\frac{p_1(\bz)}{q_1(\bz)}=\frac{
Q_\bipi(\bz)\,\prod_{m<l}(z_m-z_l) }{
\prod_{l=1}^k\prod_{\um\tau\leq
l}^{\tau\neq\pi_1\ldots \pi_l}(z_\tau-z_{\pi_{l}})}\text{ and
  }
\frac{p_2(\bz)}{q_2(\bz)} = \frac{
R(z_{\pi_1}+\ldots +z_{\pi_k},h,c_1)} {
\prod_{l=1}^k\prod_{i=1}^n(\lambda_i-z_l)}.
\end{equation}

Note that $p(\bz)$ is a polynomial, while $q(\bz)$ is a product of
linear forms.  

After these preparations we are ready to prove Theorem \ref{vanishtheorem}.
As a warm-up, we show that if the last element of the sequence is
not the trivial partition, i.e. if $\pi_k \neq [k]$, then already the
first residue in the corresponding term on the right hand side of
\eqref{intnumberone} -- the one with respect to $z_k$ -- vanishes.
Indeed, if $\pi_k\neq[k]$, then $\deg(q_2(\bz);k)=n$, while
$z_k$ does not appear in $p_2(\bz)$. 

On the other hand, $\deg(q_1(\bz);k)=1$, because the only term with $z_k$ is the one corresponding to $l=k, \tau=[k]\neq \pi_k$. If $\deg(Q_\bipi(\bz),k)=0$ held, we would be ready, as
\begin{equation}
\deg(p(\bz);k)=k-1 \text{ and } \deg(q(\bz);k)=n+1
\end{equation}
would hold, and $k \le n$. 

\begin{lemma}
For $\bipi \neq ([1],[2],\ldots ,[k])$ we have 
\begin{equation}\label{degreezero}
\deg(Q_\bipi(\bz);k)=0.
\end{equation}
\end{lemma}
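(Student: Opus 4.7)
The strategy is to apply Proposition~\ref{vanishlemma} to the multidegree $Q_\bipi(\bz) = \mdeg{\tc_\bipi X_\ff,\, T_\bipi \flag_k^*(\symdot)}_{T_k}$. That proposition bounds $\deg_{z_k} Q_\bipi$ by $s-1$, where $s$ is the number of $T_k$-weights of $T_\bipi \flag_k^*(\symdot)$ involving the variable $z_k$. So the plan reduces to showing $s \leq 1$ whenever $\bipi \in X_\ff$ and $\bipi \neq \bipi_\dist$.

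First I would enumerate the tangent weights of $\flag_k^*(\symdot)$ at the fixed point $\bipi=(\pi_1,\ldots,\pi_k)$. From the description used in Proposition~\ref{propint} these are the linear forms $z_\tau - z_{\pi_l}$ with $1\leq l\leq k$, $\Sigma\tau \leq l$, and $\tau \notin \{\pi_1,\ldots,\pi_l\}$. Because $\Sigma\tau \leq l \leq k$, the variable $z_k$ appears in $z_\tau$ only for $\tau=[k]$ (which forces $l=k$), and it appears in $z_{\pi_l}$ only for $\pi_l=[k]$ (which likewise forces $l=k$).

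In the principal case $\pi_k \neq [k]$: the constraint $\Sigma \pi_j \leq j < k$ already rules out $\pi_j = [k]$ for $j<k$, so $[k]\notin \{\pi_1,\ldots,\pi_k\}$. Hence the pair $(\tau,l)=([k],k)$ is admissible and produces the unique weight $z_k - z_{\pi_k}$ containing $z_k$, giving $s=1$. Proposition~\ref{vanishlemma} then delivers $\deg(Q_\bipi;k)\leq 0$, which is the desired conclusion.

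The case $\pi_k = [k]$ with $\bipi \neq \bipi_\dist$ cannot be settled by this naive weight count, since many weights $z_\tau - z_{[k]}$ now contain $z_k$. Here the plan is to prove that no such $\bipi$ can lie in $X_\ff$, so that $\tc_\bipi X_\ff$ is empty and $Q_\bipi=0$ trivially. The statement to establish is: among the $T_k$-fixed points of $X_\ff=\overline{P_{n,k}\cdot p_k}\subset \flag_k^*(\symdot)$, the distinguished fixed point $\bipi_\dist$ is the unique one whose top component equals $[k]$. This is the main obstacle, and I expect it to require tracing how the flag \eqref{sgamma} degenerates under the combined $T_k$- and $P_{n,k}$-actions and checking that any $T_k$-limit of a $P_{n,k}$-translate of $p_k$ whose top space $V_k$ contains $e_{[k]}=e_k$ must also satisfy $e_{[l]}\in V_l$ for every $l<k$. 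A safe back-up strategy is to argue by induction on $k$: truncate the top step $\pi_k$ and apply the inductive hypothesis to $(\pi_1,\ldots,\pi_{k-1})$ inside an appropriate quotient, reducing the obstruction in the $\pi_k=[k]$ case to the already-settled principal case for one of the lower indices.
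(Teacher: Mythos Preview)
Your treatment of the case $\pi_k\neq[k]$ is correct and is exactly the paper's argument: the tangent weights at $\bipi$ are the forms $z_\tau-z_{\pi_l}$ with $\Sigma\tau\le l$ and $\tau\notin\{\pi_1,\ldots,\pi_l\}$, so $z_k$ occurs only in the single weight $z_k-z_{\pi_k}$, and Proposition~\ref{vanishlemma} gives $\deg(Q_\bipi;k)\le 0$.

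Where you diverge from the paper is in your second case $\pi_k=[k]$, $\bipi\neq\bipi_\dist$. The paper's proof of this lemma does \emph{not} treat that case at all: read the sentence immediately preceding the lemma, where the standing assumption $\pi_k\neq[k]$ is in force and the conclusion ``the only term with $z_k$ is the one corresponding to $l=k$, $\tau=[k]\neq\pi_k$'' is exactly your principal-case count. The lemma's hypothesis is written loosely as $\bipi\neq\bipi_\dist$, but both the proof given and the single place it is invoked concern only $\pi_k\neq[k]$. So your ``main obstacle'' is not an obstacle for this lemma.

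Your proposed attack on the $\pi_k=[k]$ subcase --- showing that no such fixed point lies in $X_\ff$ --- is in fact something the paper explicitly declines to do: see Remark~\ref{remarkq}(2), which says it is not clear which fixed points of $\flag_k^*$ sit in $X_\ff$. The paper instead handles sequences with $\pi_k=[k]$ (and more generally $\pi_j=[j]$ for $j>l$, $\pi_l\neq[l]$) by a separate divisibility statement, Proposition~\ref{divisible}, combined with a degree count in $z_l$ rather than in $z_k$. That is the mechanism replacing your proposed fixed-point exclusion, and it belongs to the proof of the Residue Vanishing Theorem, not to this lemma.
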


\proof
Recall from Proposition \ref{propint} that $Q_\bipi(\bz)$ is the multidegree of a $(\CC^*)^k$-invariant cone $X_\ff$ in the tangent space of the flag manifold $\flag_k^*$ at the fixed point $\bipi$. The weights of the $(\CC^*)^k$-action on this tangent space are exactly the factors of $q_1$, namely
\[z_\tau-z_{\pi_l}\ :\ \tau \neq \pi_1,\pi_2,\ldots \pi_k; \Sigma \tau \le l, |\tau|\le 2\]
and therefore the only weight containing $z_k$ is 
\[z_{\pi_k}-z_k\]
Applying Proposition \ref{vanishlemma} with $m=k$ we arrive at \eqref{degreezero}. 
\qed 

We can thus assume that $\pi_k=[k]$, and proceed to the study of the
next residue, the one taken with respect to $z_{k-1}$. Again, assume
that $\pi_{k-1}\neq[k-1]$. As in the case of $z_k$ above,
\[\deg(q_2(\bz),k-1)=n, \deg(p_2(\bz);k-1)=0.\]
In $q_1$ the linear terms containing $z_{k-1}$ are
\begin{equation}\label{termskminusone}
z_{k-1}-z_k , z_1+z_{k-1}-z_k, z_{k-1}-z_{\pi_{k-1}}
\end{equation}

The first term here cancels with the identical term in the Vandermonde in $p_1$. The second term divides $Q_{\bipi}$, according to the following proposition from \cite{bsz} applied with $l=k-1$:
\begin{proposition}[\cite{bsz}, Proposition 7.4]\label{divisible}
Let $l\geq1$, and let $\bipi$ be an admissible sequence of
  partitions of the form $\bipi=(\pi_1,\ldots ,\pi_l,[l+1],\ldots, [k])$, where $\pi_l\neq[l]$. Then
  for $m>l$, and every partition $\tau$ such that
  $l\in\tau$, $\um\tau\leq m$, and $|\tau|>1$, we have
  \begin{equation}
    \label{zdividesq}
 (z_\tau-z_m)|Q_\bipi.
  \end{equation}
\end{proposition}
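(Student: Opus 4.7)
The plan is to translate the divisibility statement into a geometric containment for the tangent cone $\tc_\bipi X_\ff \subset T_\bipi\flag_k^*(\symdot)$. Specifically, I will show that for each $(\tau,m)$ satisfying the hypotheses, this tangent cone lies inside the coordinate hyperplane $\{y_{\tau,m}=0\}$, where $y_{\tau,m}$ is the local coordinate on $\flag_k^*(\symdot)$ at $\bipi$ of weight $z_\tau-z_{\pi_m}=z_\tau-z_m$ (recall $\pi_m=[m]$ for $m>l$). Once this containment is in place, the elimination / complete-intersection property in Proposition \ref{epdprops}, applied to the linear factor $y_{\tau,m}$, immediately yields $(z_\tau-z_m)\mid Q_\bipi=\mdeg{\tc_\bipi X_\ff,T_\bipi\flag_k^*}$.

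To establish the containment, I would use the explicit parametric description \eqref{sgamma} of $X_\ff$ via test curves. Near $\bipi$, a point of $X_\ff$ is produced by a curve $\gamma$ whose normed derivatives $v_1,\ldots,v_k$ are close to $e_{\pi_1},\ldots,e_{\pi_k}$, and the level-$i$ flag space is spanned by the symbols $\sum_{j_1+\ldots+j_s=i} v_1^{j_1}\cdots v_s^{j_s}$ whose monomials correspond to partitions of size $\le i$. The key combinatorial observation is that every such monomial whose associated partition contains $l$ must contain $v_l$ as a factor. Under the hypotheses $\pi_l\neq[l]$ while $\pi_{l+1}=[l+1],\ldots,\pi_k=[k]$, the vector $v_l$ at the basepoint is proportional to $e_{\pi_l}$ and has no $e_{[l]}$-component, so for any $\tau$ with $l\in\tau$, $|\tau|>1$, and $\Sigma\tau\le m$, the $e_\tau$-component of the level-$m$ space vanishes to first order in any deformation of $\bipi$ inside $X_\ff$. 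This is precisely the statement $y_{\tau,m}=0$ on $\tc_\bipi X_\ff$.

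The main obstacle is making this first-order vanishing rigorous, since the symbols $v_1^{j_1}\cdots v_s^{j_s}$ are nonlinear in the $v_i$ and the flag coordinates involve iterated quotients by the earlier levels. The cleanest route is an induction on $k-l$: the base case $l=k-1$ follows directly from \eqref{sgamma} by inspection, while the inductive step combines the $P_{k,n}$-equivariance of the partial desingularisation $\tilde X_k\to X_k$ with Proposition \ref{vanishlemma} to reduce the divisibility of $Q_\bipi$ in the variable $z_m$ to an analogous vanishing statement at a lower-level fixed point. An alternative, more concrete route is to exhibit, for each $(\tau,m)$, an element of the defining ideal $I(X_\ff)$ whose leading term under a monomial order refining first by the flag level $m$ and then by $|\tau|$ is exactly $y_{\tau,m}$, and then invoke \eqref{mdegformula} directly; this avoids induction but demands careful bookkeeping across all flag levels simultaneously.
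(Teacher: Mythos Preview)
The paper does not prove this proposition: it is quoted verbatim from \cite{bsz}, Proposition~7.4, and used as a black box in the vanishing argument of \S\ref{subsec:vanres}. So there is no ``paper's own proof'' to compare against here.

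Your overall strategy---reduce the divisibility $(z_\tau-z_m)\mid Q_\bipi$ to a containment $\tc_\bipi X_\ff\subset\{y_{\tau,m}=0\}$ and then invoke the elimination/complete-intersection property of multidegrees---is the right one, and this is indeed the shape of the argument in \cite{bsz}.

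However, your heuristic justification has a genuine error. You write that for a test curve $\gamma$ with $\phi(\gamma)$ near $\bipi$, ``the vector $v_l$ at the basepoint is proportional to $e_{\pi_l}$''. This does not even type-check: $v_l=\gamma^{(l)}/l!$ lies in $\CC^n$, whereas $e_{\pi_l}\in\Sym^{|\pi_l|}\CC^n$, and when $|\pi_l|>1$ these live in different summands of $\symdot$. More substantively, the relation between a test curve $\gamma$ and the fixed flag $\bipi$ is that the \emph{flag} $(\mathcal S^1_\gamma\subset\cdots\subset\mathcal S^k_\gamma)$ is close to the coordinate flag $(e_{\pi_1}\subset\cdots)$; this says nothing direct about the individual $v_j$. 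For instance, if $\pi_2=[1,1]$ then $w_2=v_2+v_1^2$ is close to $\mathrm{span}(e_1,e_1^2)$, which forces $v_2$ to be close to a multiple of $e_1$, not of $e_1^2$. So the ``key combinatorial observation'' that $l\in\tau$ forces a factor of $v_l$, while tautologically true for the monomial $v_\tau$, does not by itself force the $e_\tau$-component of the level-$m$ space to vanish along $X_\ff$ near $\bipi$.

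You correctly flag that the rigorous argument is the hard part, but neither of your two proposed routes is workable as stated. The induction on $k-l$ does not reduce to a lower-level fixed point in any obvious way: the tangent cone at $\bipi$ in $\flag_k^*$ does not factor through a tangent cone in $\flag_{k-1}^*$, and Proposition~\ref{vanishlemma} bounds degrees rather than producing divisibilities. The monomial-order route would require you to exhibit an explicit element of $I(X_\ff)$ with leading term $y_{\tau,m}$, and the parametrisation \eqref{sgamma} gives generators for the \emph{image} of $\phi$, not for the ideal of its closure; passing to the closure near a boundary fixed point is precisely the difficulty. The proof in \cite{bsz} proceeds instead by a careful analysis of the $B_k$-orbit structure (cf.\ Remark~\ref{remarkq}) and is considerably more involved than either sketch suggests.
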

Therefore, after cancellation, all linear factors from $q_1(\bz)$ which
have nonzero coefficients in front of both $z_{k-1}$ and $z_k$ vanish, and we can apply the second option in Proposition \ref{vanishprop}, leaving us with checking the degrees of $z_k$ in the new numerator and denominator of the fraction $\frac{p'(\bz)}{q'(\bz)}$.  

Note that $\frac{Q_\bipi(\bz)}{z_1+z_{k-1}-z_k}$ is the multidegree of the same cone in a smaller vector space, namely, the cone sits in the subspace \[S=\{y_{z_1+z_{k-1}-z_k}=0\}\subset T_{p_{\bipi}}\flag_k^*,\] 
where $y_{z_1+z_{k-1}-z_k}$ is eigencoordinate corresponding to the weight $z_1+z_{k-1}-z_k$. The weights with nonzero coefficient of $z_{k-1}$ in $S$ are 
\[z_{k-1}-z_{\pi_{k-1}},z_{k-1}-z_k,\]    
and by Lemma \ref{vanishlemma} 
\[\deg(p'(\bz);k-1)\le k-2+1=k-1.\]
On the other hand $\deg(q'(\bz);k-1)=n+1$,
so we can apply the second part of Proposition \ref{vanishprop}.
In general, assume that 
\[\bipi=(\pi_1,\pi_2,\ldots ,\pi_l,[l+1],\ldots ,[k]), \pi_l\neq [l],\]
and embark on the study of the residue with respect to $z_l$. The weights containing $z_l$ in $q_1$ are 
\begin{eqnarray}
z_l-z_k,z_{l}-z_{k-1},\ldots ,z_{l}-z_{l+1} \label{vandweights} \\
z_\tau-z_s \text{ with } l \in \tau, \tau \neq l, l+1\le s\le k, \um \tau \le s    \label{restweights} \\
z_l-z_{\pi_l}
\end{eqnarray}

The weights in \eqref{vandweights} cancel out with the identical terms in $p_1(\bz)$. By Propostition \ref{divisible}, the cone, whose multidegree is $Q_\bipi(\bz)$ sits in the subspace $S$, orthogonal to the coordinates corresponding to the weights in \eqref{restweights}, and therefore $Q_{\bipi}$ is divisible by these. Using Lemma \ref{vanishlemma}, after cancellation we are left with
\[\deg(p'(\bz);l)=l-1+\deg(Q'(\bz),l) \le l-1+k-l=k-1 ; \deg(q'(\bz))=n+1,\]
again. Since $k\le n$, by applying the second option of Proposition \ref{vanishprop} we arrive at the vanishing of the residue, forcing $\pi_l$ to be $[l]$.

\section{Proof of Theorem \ref{mainthmtwo}}\label{sec:existence}

Let $X\subset \PP^{n+1}$ be a smooth projective hypersurface of degree $\deg(X)=d$. The starting point is the following proposition which tells that push-forwards of sections of the canonical line bundle on $\calx_k$ represent invariant jet differentials.  
\begin{proposition}\label{directimage}
Let $\tau$ denote the tautological rank $k$ vector bundle over the flag bundle $\flag_k(\symdotx)$ and let $\calo_{\calx_k}(1)=\wedge^k \tau |_{\calx_k}$ be the canonical line bundle on $\calx_k$. Then 
\begin{equation}\label{imageformula}
\pi_*\calo_{\calx_k}(m)\subset \calo(E_{k,m{k+1 \choose 2}}\cotx)
\end{equation}
where $\pi:\calx_k \to X$ is the projection.
\end{proposition}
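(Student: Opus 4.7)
The plan is to identify sections of $\calo_{\calx_k}(m)$ with $U_k$-invariant polynomial functions on the jets via the test-curve embedding $\phi$ of Theorem \ref{embedfinal}, and to check directly that the $\CC^*$-weight of the resulting jet differentials is $m\binom{k+1}{2}$.

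The fibrewise version of $\phi$ embeds $J_k^{\nondeg}(\cotx)$ into $\calx_k\subset\flag_k(\symdotx)$, and composing with the Pl\"ucker map $\flag_k(\symdotx)\hookrightarrow\PP(\wedge^k\symdotx)$ realises $\calo_{\calx_k}(1)$ as the restriction of the hyperplane bundle. Formula \eqref{sgamma} supplies a canonical generator of the fibrewise line $\wedge^k\mathcal{S}^k_\g$, namely
\[\omega(\g):=w_1(\g)\wedge w_2(\g)\wedge\dots\wedge w_k(\g),\qquad w_m(\g):=\sum_{\Sigma\tau=m}v_\tau,\qquad v_i:=\g^{(i)}/i!.\]
Any local section $s\in H^0(\pi^{-1}(U),\calo_{\calx_k}(m))$ evaluates on $\omega(\g)^{\otimes m}$ to give a polynomial function $\tilde s(\g)$ on the fibres of $J_k(\cotx)|_U$, and the assignment $s\mapsto\tilde s$ is $\calo_U$-linear, so the task reduces to identifying $\tilde s$ as a section of $E_{k,m\binom{k+1}{2}}\cotx$.

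For the $\CC^*$-weight, the diagonal $\CC^*\subset\GG_k$ acts with weight $i$ on $v_i$, making each $w_m(\g)$ a $\CC^*$-homogeneous sum of monomials of total weight $m$; hence $\omega$ has weight $\sum_{m=1}^k m=\binom{k+1}{2}$ and $\tilde s$ is $\CC^*$-equivariant of weight $m\binom{k+1}{2}$, placing it fibrewise in $E^{GG}_{k,m\binom{k+1}{2}}\cotx$. For $U_k$-invariance, every $\varphi\in\GG_k$ preserves each $\mathcal{S}^i_\g$ as a linear subspace since $\Psi\circ(\g\circ\varphi)$ vanishes to order $i$ iff $\Psi\circ\g$ does, so the induced transformation of the basis $(w_1,\dots,w_k)$ of $\mathcal{S}^k_\g$ is upper-triangular with diagonal $(\varphi'(0),\varphi'(0)^2,\dots,\varphi'(0)^k)$, as follows from the matrix description of $\GG_k$ in \S\ref{subsec:jetdiff}. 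For $\varphi\in U_k$ one has $\varphi'(0)=1$, the matrix is unipotent, and $\omega$---hence $\tilde s$---is $U_k$-invariant.

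Combining the two facts gives $\tilde s\in E_{k,m\binom{k+1}{2}}\cotx$ fibrewise over the dense open $J_k^{\nondeg}(\cotx)$, which produces an $\calo_X$-linear injection $\pi_*\calo_{\calx_k}(m)\hookrightarrow\calo(E_{k,m\binom{k+1}{2}}\cotx)$ of coherent sheaves on $X$ and hence the claimed inclusion. The sole delicate ingredient in the outline is the first step---producing the canonical wedge $\omega(\g)$ whose factors have the explicit form dictated by \eqref{sgamma} and which realises the line $\wedge^k\mathcal{S}^k_\g$ as a polynomial function of the jet variables---after which the weight count and the $U_k$-invariance drop out mechanically from the matrix formulae of \S\ref{subsec:jetdiff}.
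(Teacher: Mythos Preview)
Your proof is correct and follows essentially the same route as the paper's: identify sections of $\calo_{\calx_k}(1)$ with Pl\"ucker-type data on $\mathcal{S}^k_\g$ via \eqref{sgamma}, compute the $\CC^*$-weight of the canonical wedge (equivalently, of the $k\times k$ minors of the matrix $M_\g$ whose $i$th row is $w_i$) as $1+2+\cdots+k=\binom{k+1}{2}$, and observe $U_k$-invariance. Your treatment is more detailed than the paper's brief sketch---in particular, your argument for $U_k$-invariance via the upper-triangular action of $\GG_k$ on the flag $(\mathcal{S}^1_\g\subset\cdots\subset\mathcal{S}^k_\g)$ makes explicit what the paper merely asserts.
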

\proof
By \eqref{sgamma}, the sections of the tautological bundle $\calo_{{\calx}_k}(1)$ on $\calx_k \subset \flag_k(\symdotx$ are given by Plucker coordinates on
\[\mathcal{S}^k_\g=\mathrm{Span}_\CC (v_1,v_2+v_1^2,\ldots, \sum_{j_1+\ldots +j_s=k}v_1^{j_1}\cdots v_s^{j_s}) \text{ with } v_i=\g^{(i)}/i!, \]
that is, $k \times k$ minors of the matrix $M_\g \in \Hom(\CC^k, \symdotx)$, whose $i$th row is 
\[\sum_{j_1+\ldots +j_s=i}v_1^{j_1}\cdots v_s^{j_s}\in \Sym^i T_X^*.\]
These $k\times k$ minors have weighted degree $1+2+\ldots +k={k+1 \choose 2}$ in the $\g^{(i)}$, and invariant under $U_k$.  
\qed 

Let $\calo_{\tilde{\calx}_k}(m)=\rho^* \calo_{\calx_k}$ denote the $m$-twisted canonical bundle on $\tilde{\calx}_k$, where $\rho:\tilde{\calx}_k \to \calx_k$ is the partial resolution defined in \eqref{desing}. Proposition \ref{directimage} now implies
\begin{corollary}\label{imageformula2}
For $\tilde{\pi}=\tau \circ \pi:\tilde{\calx}_k \to X$ (see diagram \eqref{diagram})
\begin{equation*} 
\tilde{\pi}^*\calo_{\tilde{\calx}_k}(m)\subseteq \calo(E_{k,m{k+1 \choose 2}}\cotx).
\end{equation*}
\end{corollary}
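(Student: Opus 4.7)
\proof[Proof plan]
The plan is to reduce the statement for $\tilde{\calx}_k$ to the statement already established on $\calx_k$ in Proposition \ref{directimage}, by pushing the line bundle down along the partial resolution $\rho:\tilde{\calx}_k\to \calx_k$.

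The first step is purely formal: factor $\tilde{\pi}_* = \pi_* \circ \rho_*$ and invoke the defining identity $\calo_{\tilde{\calx}_k}(m)=\rho^{*}\calo_{\calx_k}(m)$ from \eqref{desing}. The projection formula then gives
\[
\rho_*\calo_{\tilde{\calx}_k}(m) \;=\; \rho_*\rho^{*}\calo_{\calx_k}(m) \;\cong\; \calo_{\calx_k}(m)\otimes \rho_*\calo_{\tilde{\calx}_k}.
\]
Thus the whole statement collapses to verifying $\rho_*\calo_{\tilde{\calx}_k}=\calo_{\calx_k}$, or at least the weaker inclusion of direct images which yields the claim after combining with Proposition \ref{directimage}.

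The second step is therefore to show $\rho_*\calo_{\tilde{\calx}_k}=\calo_{\calx_k}$. This is a fibrewise question over $X$, so it suffices to treat the model fibre map $\rho:\tilde{X}_k\to X_k$ of Section \ref{quotient}. By construction $\tilde{X}_k=\mathrm{GL}_n\times_{P_{n,k}}\overline{P_{n,k}\cdot p_k}$ and $X_k=\overline{\mathrm{GL}_n\cdot p_k}$, and $\rho(g,x)=g\cdot x$; over the open dense orbit $\mathrm{GL}_n\cdot p_k$ this map is an isomorphism, so $\rho$ is a proper surjective birational morphism with connected fibres. For such $\rho$, Zariski's main theorem (applied to the composition with the normalization of $X_k$) gives $\rho_*\calo_{\tilde{X}_k}=\calo_{X_k}$; globalising over the flag bundle $\flag_k(T_X)$ in diagram \eqref{diagram} then yields the desired identity of sheaves on $\calx_k$.

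Combining the two steps, $\tilde{\pi}_*\calo_{\tilde{\calx}_k}(m)=\pi_*\calo_{\calx_k}(m)$, and the required inclusion $\tilde{\pi}_*\calo_{\tilde{\calx}_k}(m)\subseteq \calo(E_{k,m\binom{k+1}{2}}\cotx)$ is then immediate from Proposition \ref{directimage}.

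The genuinely delicate point is the second step, namely the assertion $\rho_*\calo=\calo$, because $X_k\subset \flag_k(\symdot)$ is explicitly described as a highly singular closure of a $\mathrm{GL}_n$-orbit and it is not immediate that $X_k$ is normal. Two honest ways to get around this: one may either verify normality of $X_k$ directly from its orbit description (using the $P_{n,k}$-equivariant structure of $\overline{P_{n,k}\cdot p_k}$), or—more cheaply—observe that an \emph{inclusion} $\tilde{\pi}_*\calo_{\tilde{\calx}_k}(m)\hookrightarrow \pi_*\calo_{\calx_k}(m)$ is all that is needed, and this injection holds automatically for any proper birational $\rho$ because pullback of sections of a line bundle along a birational morphism to a reduced base is injective. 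This weaker form suffices to chain with Proposition \ref{directimage} and finish the corollary.
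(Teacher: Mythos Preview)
Your overall strategy---factor $\tilde{\pi}_*=\pi_*\circ\rho_*$, use $\calo_{\tilde{\calx}_k}(m)=\rho^*\calo_{\calx_k}(m)$ and the projection formula, then invoke Proposition \ref{directimage}---is exactly what the paper has in mind; the paper itself offers no argument beyond the phrase ``Proposition \ref{directimage} now implies.'' You also correctly isolate the genuine subtlety, namely whether $\rho_*\calo_{\tilde{\calx}_k}=\calo_{\calx_k}$, which amounts to a normality question for the singular orbit closure $X_k$ that the paper passes over in silence.

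However, your proposed ``cheap'' fallback has the injection going the wrong way. For any proper birational $\rho$ with reduced target, pullback of sections yields
\[
\pi_*\calo_{\calx_k}(m)\;\hookrightarrow\;\pi_*\rho_*\rho^*\calo_{\calx_k}(m)\;=\;\tilde{\pi}_*\calo_{\tilde{\calx}_k}(m),
\]
which is the \emph{opposite} inclusion to the one you need. The corollary asks that every section of $\calo_{\tilde{\calx}_k}(m)$ over a fibre be an invariant jet differential, i.e.\ that sections upstairs \emph{descend}, not merely that sections downstairs lift. Without normality of $X_k$ (or some equivalent input), the projection-formula route does not deliver this direction for free: $\rho_*\calo_{\tilde{\calx}_k}$ can be strictly larger than $\calo_{\calx_k}$.

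A correct completion therefore requires either (i) actually establishing $\rho_*\calo_{\tilde{X}_k}=\calo_{X_k}$ on the model fibre, or (ii) bypassing $\calx_k$ altogether and arguing directly on $\tilde{\calx}_k$: a section of $\calo_{\tilde{\calx}_k}(m)$, restricted to the dense open $J_k^{\nondeg}(T_X^*)/\GG_k$ common to both compactifications, is given by the same Pl\"ucker-minor expressions as in the proof of Proposition \ref{directimage}, hence is a $U_k$-invariant polynomial of weighted degree $m\binom{k+1}{2}$ in $(\gamma',\ldots,\gamma^{(k)})$; one then checks that this polynomial expression extends over the non-regular locus in $J_k(T_X^*)$. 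Either route closes the gap, but the injectivity-of-pullback shortcut does not.
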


The following classical theorem connects global invariant jet differentials to the GGL conjecture.

\begin{theorem}[Fundamental vanishing theorem \cite{gg,dem,siu1}]\label{demailly}
Assume that there exist integers $k,m>0$ and ample line bundle
$A\to X$ such that
\[0\neq H^0(\tilde{\calx}_k,\calo_{\tilde{\calx}_{k}}(m) \otimes \pi^*A^{-1})\subset
H^0(X,E_{k,m}\cotx \otimes A^{-1})\] 
has non zero sections
$\s_1,\ldots ,\s_N$, and let $Z\subset X_k$ be the base locus of
these sections. Then every entire holomorphic curce $f:\CC \to X$
necessarily satisfies $f_{[k]}(\CC)\subset Z$. In other words, for
every global $\GG_k$-invariant differential equation $P$ vanishing on
an ample divisor, every entire holomorphic curve $f$ must satisfy
the algebraic differential equation $P(f'(t),\ldots, f^{(k)}(t))\equiv 0$.
\end{theorem}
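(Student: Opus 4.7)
The plan is to produce algebraic-differential-equation constraints on every entire holomorphic curve $f:\CC\to X$ by lifting $f$ canonically to the compactified jet tower $\tilde{\calx}_k$ and then invoking an Ahlfors--Schwarz-type vanishing for jet differentials with values in a negative line bundle. This is the classical Green--Griffiths--Demailly--Siu mechanism, transported to the fibred projective model $\tilde{\calx}_k$ of this paper.

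First, one constructs from $f$ the holomorphic $k$-jet lift $f_{[k]}:\CC\to\tilde{\calx}_k$. Away from the zeros of $f'$ the jet $(f',\ldots,f^{(k)})$ lies in $J_k^{\mathrm{reg}}(\CC,X)$, so its $\GG_k$-orbit determines a point in the image of $\phi$ inside $\calx_k$; the map $t\mapsto[j_k f(t)]$ then extends holomorphically across the exceptional locus by properness of $\tilde{\calx}_k\to X$, giving $\tilde{\pi}\circ f_{[k]}=f$. By Corollary \ref{imageformula2}, each section $\sigma\in H^0(\tilde{\calx}_k,\calo_{\tilde{\calx}_k}(m)\otimes\tilde{\pi}^*A^{-1})$ corresponds, via push-forward to $X$, to a global $U_k$-invariant jet differential $P_\sigma\in H^0(X,E_{k,m{k+1\choose 2}}\cotx\otimes A^{-1})$ whose pointwise value at $f_{[k]}(t)$ equals $P_\sigma(f'(t),\ldots,f^{(k)}(t))$ under the canonical trivialisation of $f^*A^{-1}$ on $\CC$.

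The crucial analytic input is the Ahlfors--Schwarz lemma. Equip $A$ with a smooth Hermitian metric of positive curvature and pull back Demailly's Finsler metric on the Semple tower (constructed inductively from the tautological classes $\tau_i$) to obtain a singular Hermitian pseudo-metric on $\calo_{\tilde{\calx}_k}(m)\otimes\tilde{\pi}^*A^{-1}$ whose curvature current is bounded above by a negative multiple of a K\"ahler form. Pulling back along $f_{[k]}$ produces a non-negative pseudo-metric on $\CC$ of the form $|\sigma(f_{[k]}(t))|^{2/m}\,|dt|^2$, whose curvature is then bounded above by a negative constant times itself. Since $\CC$ carries no such negatively curved pseudo-metric, Ahlfors--Schwarz forces $\sigma(f_{[k]}(t))\equiv 0$, i.e.\ $f_{[k]}(\CC)\subset(\sigma=0)$. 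Applying this to a basis $\sigma_1,\ldots,\sigma_N$ yields $f_{[k]}(\CC)\subset Z$, which reads as the algebraic differential equation $P_{\sigma_i}(f',\ldots,f^{(k)})\equiv 0$ in the original variables.

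The principal obstacle is transferring the Ahlfors--Schwarz estimate from the smooth Demailly--Semple tower, where Demailly's Finsler metric is natively defined, to the singular projective model $\tilde{\calx}_k$. One must verify that the pulled-back Hermitian pseudo-metric on $\calo_{\tilde{\calx}_k}(m)\otimes\tilde{\pi}^*A^{-1}$ has curvature singularities supported on a proper analytic subvariety of $\tilde{\calx}_k$, so that a generic entire curve $f_{[k]}(\CC)$ is either contained in this subvariety (in which case one inducts on dimension) or the Ahlfors--Schwarz integration proceeds unobstructed. Because $f_{[k]}$ is a one-dimensional object and the resolution $\rho:\tilde{\calx}_k\to\calx_k$ is birational, this can be reduced to the classical Semple-tower argument by comparing the two compactifications along the natural birational map between them, exploiting that $f_{[k]}$ lifts to the Semple tower wherever it is defined.
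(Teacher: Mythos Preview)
The paper does not supply its own proof of this theorem: it is stated with citations to Green--Griffiths, Demailly, and Siu and then used as a black box. Your sketch therefore cannot be compared against a proof in the paper, but it does follow the classical Demailly mechanism (jet lift plus Ahlfors--Schwarz negative-curvature argument), which is indeed the content of the cited references.

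One remark on the ``principal obstacle'' you identify. You are working harder than necessary by trying to run Ahlfors--Schwarz directly on the singular model $\tilde{\calx}_k$. The hypothesis already hands you, via the displayed inclusion (which is Corollary~\ref{imageformula2} in the paper, with the weight $m\binom{k+1}{2}$ rather than $m$), a nonzero global section $P\in H^0(X,E_{k,\bullet}T_X^*\otimes A^{-1})$ living on $X$ itself. At that point the classical fundamental vanishing theorem applies verbatim: one evaluates $P$ along the jet $j_kf$ of the entire curve, obtaining a section of $f^*A^{-1}$ on $\CC$, and the Ahlfors--Schwarz lemma (or equivalently the logarithmic-derivative / Nevanlinna argument in Green--Griffiths and Siu) forces this to vanish identically. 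No metric needs to be transported to $\tilde{\calx}_k$, no birational comparison with the Semple tower is required, and no induction on dimension is needed. The role of $\tilde{\calx}_k$ in this paper is purely to \emph{produce} such sections via intersection theory; once produced, the vanishing along entire curves is the classical statement on $X$.
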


Note, that by Theorem 1. of \cite{div2}, 
\[H^0(X,E_{k,m}\cotx \otimes A^{-1})= 0\]
holds for all $m\ge 1$ if $k<n$, so we can restrict our attention to the range $k\ge n$. On the other hand for $k>n$, the flag manifold $\flag_k(\CC^n)$ is not defined in the Snowman Model, and therefore our residue formula \eqref
{intnumberthree} does not hold. Therefore we consider the $k=n$ case only. 

To control the order of vanishing of these differential forms along
the ample divisor we choose $A$ to be --as in  \cite{dmr} -- a
proper twist of the canonical bundle of $X$. Recall that the
canonical bundle of the smooth, degree $d$ hypersurface $X$ is
\[K_X=\calo_X(d-n-2),\]
which is ample as soon as $d\ge n+3$.
The following theorem summarises the results of \S 3 in \cite{dmr}.

\begin{theorem}[Algebraic degeneracy of entire curves \cite{dmr}]\label{germtoentire}
Assume that $n=k$, and there exist a $\delta=\delta(n) >0$ and $D=D(n,\delta)$ such that
\[H^0(\tilde{\calx}_{n},\calo_{\tilde{\calx}_{n}}(m) \otimes \pi^*K_X^{-\delta m})\subseteq
H^0(X,E_{d,m}\cotx \otimes K_X^{-\delta m})\neq 0 \]
whenever $\deg(X)>D(n,\delta)$ for some $m \gg 0$. 
Then the Green-Griffiths-Lang conjecture holds whenever 
\[\deg(X) \ge \max(D(n,\delta), \frac{n^2+2n}{\delta}+n+2).\]
\end{theorem}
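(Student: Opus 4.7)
The plan is to follow the three-step Siu--Voisin--Merker strategy in the effective form of Diverio--Merker--Rousseau \cite{dmr}. Step one uses the hypothesized global section to produce a first differential equation that every entire curve must satisfy; step two propagates this equation by meromorphic vector fields of bounded pole order on the jet bundle of the universal family; step three checks that the arithmetic condition on $\deg X$ is exactly what is needed so that the ample twist $K_X^{-\delta m}$ can absorb the pole orders introduced in step two.

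First I would apply Theorem \ref{demailly} to the assumed nonzero section $P\in H^0(\tilde{\calx}_n,\calo_{\tilde{\calx}_n}(m)\otimes\tilde\pi^*K_X^{-\delta m})$, pushed forward to $H^0(X,E_{n,m'}\cotx\otimes K_X^{-\delta m})$ via Corollary \ref{imageformula2} (with $m'=m\binom{n+1}{2}$). Because the twist is by an ample line bundle, the base locus $Z\subsetneqq X$ is a proper subvariety, and every entire curve $f:\CC\to X$ has $n$-jet contained in $Z$. What remains is to promote $Z$ from a subvariety of the jet bundle to a proper algebraic subvariety of $X$ itself; a single equation is not enough for this.

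Second, I would work with the universal hypersurface $\pi_U:\mathcal{X}\to U\subset \PP H^0(\PP^{n+1},\calo(d))$ and its relative jet bundle $J_n(\mathcal{X}/U)$. The key input, due to Siu \cite{siu1} and Merker \cite{merker3}, is the existence of a collection of global meromorphic vector fields on $J_n(\mathcal{X}/U)$ with pole order along the hyperplane bundle bounded by a linear function of $n$ (essentially by $n+2$, independently of $d$). Differentiating the pulled-back equation $P$ along these vector fields iteratively produces new invariant jet differentials vanishing on an ample divisor, each differentiation costing a bounded multiple of the hyperplane bundle. Because the vector fields span the vertical tangent space of $J_n(\mathcal{X}/U)\to U$ at a generic point, one can produce enough algebraically independent new equations to cut the total base locus down to a proper subvariety of $X$, provided the ample line bundle in which they live is still big.

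Third, the quantitative balance is what dictates the bound $\deg X\ge \frac{n^2+2n}{\delta}+n+2$. The pole order accumulated after the differentiations needed to force algebraic degeneracy grows like $(n^2+2n)m$ in terms of the hyperplane class, while the ample twist available is $K_X^{-\delta m}=\calo_X(-(d-n-2)\delta m)^{-1}=\calo_X((d-n-2)\delta m)$. Requiring $(d-n-2)\delta\ge n^2+2n$ is precisely the stated degree bound. The first bound, $\deg X>D(n,\delta)$, is inherited directly from the hypothesis on nonvanishing of the global sections. Combining these, one concludes that for $\deg X\ge\max(D(n,\delta),\frac{n^2+2n}{\delta}+n+2)$ the image $f(\CC)$ is contained in a proper algebraic subvariety $Y\subsetneqq X$.

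The main obstacle, and indeed the heart of \cite{dmr}, is the construction and the pole-order estimate for the slanted vector fields on $J_n(\mathcal{X}/U)$: showing that sufficiently many independent ones exist with pole orders linear in $n$, and that successive differentiation indeed yields algebraically independent differentials whose joint base locus is a proper subset of $X$. In my write-up I would quote this as a black box from \cite{siu1,merker3,dmr} and concentrate on checking the numerical balance that produces the explicit constant $\frac{n^2+2n}{\delta}+n+2$.
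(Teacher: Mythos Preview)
Your sketch is correct and in fact goes beyond what the paper does: the paper does not give its own proof of this statement but simply quotes it as a summary of \S3 of \cite{dmr}. Your outline of the Siu--Voisin--Merker strategy (fundamental vanishing theorem, propagation by slanted vector fields of bounded pole order on the universal family, numerical balance $(d-n-2)\delta\ge n^2+2n$) is exactly the content of that reference, and your plan to quote the vector-field construction and pole-order estimate as a black box from \cite{siu1,merker2,dmr} is precisely how the paper treats it. One minor slip: you say the pole order is ``essentially $n+2$'' and then later ``$n^2+2n$''; the latter is the relevant bound from \cite{merker2,dmr} that produces the constant in the statement.
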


Following \cite{dmr} we choose $A$ to be a
proper twist of the canonical bundle of $X$, which is ample as soon as $d\ge n+3$ and we prove

\begin{theorem}\label{maintwo}
Let $X\subset \PP^{n+1}$ be a smooth complex hypersurface with ample canonical bundle, that is $\deg X\ge n+3$. If $\d=\frac{1}{n^3(n+1)}$ and $d>D(n)=2n^{10}$
then
\[\emptyset \neq H^0(\tilde{\calx}_n,\calo_{\tilde{\calx}_n}(m) \otimes \tilde{\pi}^*K_X^{-\d {n+1 \choose 2}m})\subseteq
H^0(X,E_{n,m{n+1 \choose 2}}\cotx \otimes K_X^{-\d {n+1 \choose 2}m}) \] is nonempty, provided that $\d {n+1 \choose 2}m$ is integer and Conjecture \ref{conj} holds. 
\end{theorem}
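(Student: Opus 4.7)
The plan is to reduce the nonvanishing of $H^0$ to the positivity of a concrete tautological intersection number on $\tilde{\calx}_n$, rewrite that number as an iterated residue via Theorem \ref{maintechnical}, and then invoke Conjecture \ref{conj} to estimate the resulting Laurent expansion in the regime $d>2n^{10}$.

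First, following the strategy of \cite{dmr} (as explained around Theorem \ref{germtoentire}), I would apply Demailly's algebraic holomorphic Morse inequalities to the line bundle
\[
L_m=\calo_{\tilde{\calx}_n}(m)\otimes \tilde{\pi}^* K_X^{-\delta\binom{n+1}{2}m}
\]
on the $n^2$-dimensional variety $\tilde{\calx}_n$. Choose an auxiliary ample class (a small multiple of $h=c_1(\pi^*\calo_X(1))$ plus a fibrewise ample contribution) so as to write $c_1(L_1)$ as a difference of two nef classes $F-G$; Morse inequalities then give
\[
h^0(\tilde{\calx}_n,L_m)\ge \frac{m^{n^2}}{(n^2)!}\!\left(F^{n^2}-n^2\,F^{n^2-1}\!\cdot G\right)+o(m^{n^2}).
\]
Thus it suffices to show that the intersection number $I(d,\delta)=\int_{\tilde{\calx}_n}P(u,h)$ is strictly positive, where $P(u,h)$ is the explicit homogeneous degree $n^2$ polynomial in $u$ and $h$ coming from expanding the Morse-inequality expression with $\delta=\frac{1}{n^3(n+1)}$ and $\deg(X)=d$.

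The second step is to feed $P(u,h)$ into Theorem \ref{maintechnical}. Since $X\subset\PP^{n+1}$ is a hypersurface of degree $d$, its total Segre class $s(X)$ has well-known explicit polynomial expressions in $h$ and $d$, so each factor $s(1/z_j)=\sum_{i=0}^n s_i(X)z_j^{-i}$ becomes an explicit Laurent polynomial in $z_j$ with coefficients polynomial in $d$. After pushing forward by $\int_X(\cdot)$, the integral $I(d,\delta)$ becomes a single iterated residue
\[
I(d,\delta)=\sires \frac{Q_n(\bz)\,\prod_{m<l}(z_m-z_l)\,P(z_1+\cdots+z_n,h)\prod_j s(1/z_j)}{\prod_{m+r\le l\le n}(z_m+z_r-z_l)\,(z_1\cdots z_n)^n}\cdot[X],
\]
which I would reorganise by expanding the Thom generating function $\Tp_n(\bz)=\prod_{m<l}(z_m-z_l)Q_n(\bz)/\prod_{m+r\le l}(z_m+z_r-z_l)$ as $\sum_{\bi:\Sigma\bi=0}\Tp_{\bi}\bz^{\bi}$ in the domain $|z_1|\ll\cdots\ll|z_n|$. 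The iterated residue then turns into a finite sum $I(d,\delta)=\sum_{\bi}\Tp_{\bi}\,C_{\bi}(d,\delta)$, where each $C_\bi(d,\delta)$ is an explicit polynomial in $d$ computed by a combinatorial expansion of the Segre factors and the Morse polynomial $P$.

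The third step is asymptotic analysis in $d$. A direct bookkeeping shows that every $C_\bi(d,\delta)$ is a polynomial in $d$ of degree at most $n+1$, and that the single \emph{dominant monomial} $\bi_{\max}$ (the one minimising the total $z$-degree needed to match the factor $(z_1\cdots z_n)^{-n}$ coming from the $1/(z_j)^n$) produces a coefficient $C_{\bi_{\max}}(d,\delta)$ whose leading term in $d$ is a positive multiple of $d^{n+1}$ times a polynomial in $\delta$; the choice $\delta=\frac{1}{n^3(n+1)}$ is calibrated so that this leading term is strictly positive. All other exponents $\bi$ contribute polynomials of strictly lower degree in $d$, but they are multiplied by the coefficients $\Tp_\bi$ which a priori could be enormous. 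Here is exactly where Conjecture \ref{conj} enters: part (1) gives $\Tp_\bi\ge 0$, so the positive dominant term is never cancelled; part (2) (connectedness) lets me bound $\Tp_\bi$ inductively along predecessor chains by $k^{2|\bi|}\le n^{2n^2}$, and hence control the total contribution of all non-dominant terms by $\mathrm{const}\cdot n^{2n^2}d^n$. Comparing this uniform bound against the positive leading term of order $d^{n+1}$ yields $I(d,\delta)>0$ as soon as $d>2n^{10}$, which is the threshold appearing in the statement.

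The main obstacle I foresee is the third step: obtaining a sharp enough bound on the total Laurent-coefficient mass $\sum|\Tp_\bi C_\bi(d,\delta)|$ outside the dominant monomial, which requires both parts of Conjecture \ref{conj} in an essential way (positivity alone does not suffice, because without the connectedness/ratio bound $\Tp_\bi/\Tp_{\bj}<k^2$ one cannot ensure that the sum telescopes into a geometric-type estimate against $d^{n+1}$). The calibration of the constant $2n^{10}$ will come out of this combinatorial estimate, using the standard inequality $\binom{n+1}{2}\le n^2$ and $\delta^{-1}=n^3(n+1)\le n^4$ to convert the ratio of leading coefficients into the stated polynomial threshold.
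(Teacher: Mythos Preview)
Your overall architecture---Morse inequalities reducing to a tautological integral, then Theorem~\ref{maintechnical}, then asymptotics in $d$---matches the paper's. But the third step is misconceived in a way that prevents you from reaching the threshold $2n^{10}$.

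There is no single ``dominant monomial'' $\bi_{\max}$. The $d^{n+1}$-coefficient of $I(d,\delta)$ equals $\bigl(1-n^2\tbinom{n+1}{2}\delta\bigr)\rho_{\mathbf 0}$ where
\[
\rho_{\mathbf 0}=\sum_{\substack{\bi\in\ZZ_{\ge 0}^n\\ \Sigma\bi=n^2}}\Tp_{(n,\ldots,n)-\bi}\binom{n^2}{i_1,\ldots,i_n},
\]
a weighted sum over \emph{all} Thom coefficients supported in the relevant window; its positivity comes from Conjecture~\ref{conj}(1) together with the separately established fact $\Tp_{\mathbf 0}>0$. Every lower coefficient $p_{n+1-l}$ is likewise a weighted sum of Thom coefficients, and what must be shown is the ratio bound $|p_{n+1-l}|<n^{10l}\,p_{n+1}$, after which the Fujiwara bound on roots gives $I>0$ for $d>2n^{10}$.

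Your proposed estimate---bounding each $\Tp_{\bi}$ by walking a predecessor chain back to $\mathbf 0$ and picking up a factor $k^2$ per step---gives only $\Tp_{\bi}\le n^{O(n^2)}\Tp_{\mathbf 0}$, since chains can have length of order $n^2$. Fed into the sub-leading terms this yields $|p_{n+1-l}|\lesssim n^{O(n^2)}p_{n+1}$, hence a threshold $d>n^{O(n^2)}$, not $2n^{10}$. The paper avoids this by never bounding individual $\Tp_{\bi}$ against $\Tp_{\mathbf 0}$. Instead it groups the sub-leading coefficients into sums $\sum\rho_{\baa-\bbb}$ over short shifts ($\Sigma\bbb\le n$) and proves directly that $\sum_{\Sigma\baa=r-m,\,\Sigma\bbb=r}\rho_{\baa-\bbb}<n^{8r-7m}\rho_{\mathbf 0}$: the multinomial weights in $\rho_{\mathbf 0}$ absorb most of the combinatorics, and Conjecture~\ref{conj}(2) is invoked only for those $\bi$ with a coordinate $<-n$, requiring a predecessor chain of length at most $r-m\le n$ rather than $n^2$. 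This is precisely what converts the exponent from $O(n^2)$ to the linear $10l$ and produces the polynomial degree bound.
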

Theorem \ref{mainthmtwo} follows from Theorem \ref{germtoentire} and Theorem \ref{maintwo}. 

To prove Theorem \ref{maintwo} we use the
algebraic Morse inequalities of Demailly and Trapani to reduce the existence of global sections to the positivity of certain tautological integrals over $\tilde{\calx}_k$.  Let $L\to X$
be a holomorphic line bundle over a compact K\"ahler manifold of
dimension $n$ and $E \to X$ a holomorphic vector bundle of rank $r$.
Demailly in \cite{dem00} proved the following 
\begin{theorem}[Algebraic Morse inequalities \cite{dem00,trap}]\label{morse}
Suppose that $L=F\otimes G^{-1}$ is the difference of the nef line
bundles $F,G$. Then for any nonnegative integer $q\in \ZZ_{\ge 0}$ 
\[\sum_{j=0}^q(-1)^{q-j}h^j(X,L^{\otimes m}\otimes E)\le
r \frac{m^n}{n!}\sum_{j=0}^q(-1)^{q-j}{n \choose j}F^{n-j}\cdot
G^j+o(m^n).\] 
In particular, $q=1$ asserts that $L^{\otimes
m}\otimes E$ has a global section for $m$ large provided 
\[F^n-nF^{n-1}G>0.\]
\end{theorem}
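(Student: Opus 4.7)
The plan is to apply the algebraic Morse inequalities (Theorem \ref{morse}) with $q = 1$, convert the required numerical positivity into a tautological integral on $\tilde{\calx}_n$, evaluate that integral via Theorem \ref{maintechnical}, and control the sign of the resulting iterated residue using Conjecture \ref{conj}.

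First I would set $L = \calo_{\tilde{\calx}_n}(1)\otimes\tilde{\pi}^*K_X^{-\delta\binom{n+1}{2}}$ and split it as $L = F\otimes G^{-1}$ with $F = \calo_{\tilde{\calx}_n}(1)\otimes\tilde{\pi}^*\calo_X(a)$ and $G = \tilde{\pi}^*(\calo_X(a)\otimes K_X^{\delta\binom{n+1}{2}})$, where the auxiliary integer $a = a(n)$ is chosen large enough that $F$ is nef; existence of such an $a$ follows from Proposition \ref{directimage} together with the standard Demailly positivity argument, and $G$ is nef because $K_X = \calo_X(d-n-2)$ is ample for $d\ge n+3$. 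Theorem \ref{morse} then reduces the existence of a nonzero section of $L^{\otimes m}$ (for some $m\gg 0$ with $\delta\binom{n+1}{2}m\in\ZZ$) to the intersection inequality
\[
F^N - N\cdot F^{N-1}\cdot G \;>\; 0\quad\text{on }\tilde{\calx}_n,\qquad N = \dim\tilde{\calx}_n = n^2.
\]
Next I would expand this left-hand side as a tautological integral $\int_{\tilde{\calx}_n}P_d(u,h)$, where $P_d$ is a bihomogeneous polynomial of total degree $N$ whose coefficients depend polynomially on $d$, and plug into Theorem \ref{maintechnical} with $k = n$. Since $X\subset\PP^{n+1}$ is a degree-$d$ hypersurface, its total Segre class is the explicit rational function in $h$ obtained from $c(T_X) = (1+h)^{n+2}/(1+dh)$, and $\int_X h^n = d$; hence the iterated residue collapses to a polynomial in $d$ whose coefficients are finite positive-rational combinations of the Laurent coefficients $\Tp_{\bi}$ ($\Sigma\bi = 0$) of the Thom generating function \eqref{tpgenerating}. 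The task reduces to showing that the leading $d$-coefficient of this polynomial is strictly positive and large enough to dominate the negative lower-order corrections once $d > 2n^{10}$.

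Here Conjecture \ref{conj} enters decisively. Part (1) gives $\Tp_{\bi}\ge 0$, preventing internal cancellations within each fixed power of $d$. Part (2) is used quantitatively: starting from a numerically explicit positive seed $\Tp_{\bi_0} > 0$ visible at the distinguished fixed point $\bipi_{\mathrm{dist}}$ of Theorem \ref{vanishtheorem} (whose contribution can be computed by hand using $Q_n$), a backward walk along the predecessor relation yields $\Tp_{\bj} > \Tp_{\bi}/n^2$ at each step, and iterating over chains of polynomial length in $n$ furnishes an effective lower bound $\Tp_{\bi}\ge n^{-O(n)}\Tp_{\bi_0}$ for every $\bi$ reachable from $\bi_0$. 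Summing over those $\bi$ that contribute to the leading $d$-coefficient gives an explicit positive lower bound polynomial in $n^{-1}$.

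The main obstacle, and the source of the $2n^{10}$ threshold, is the effective accounting that matches this lower bound against the negative lower-order $d$-terms. The exponent $10$ absorbs (i) the weight $\binom{n+1}{2}$ from Proposition \ref{directimage}, (ii) the factor $N = n^2$ from Morse, (iii) $n!$-type combinatorial factors generated by the Vandermonde and the product $\prod_j s(1/z_j)$ in \eqref{intnumbertwo}, and (iv) the ratio $n^2$ coming from Conjecture \ref{conj}(2); the choice $\delta = 1/(n^3(n+1))$ is calibrated so that $\delta\binom{n+1}{2}\sim 1/n^2$ keeps the $G$-contribution subcritical while making $\delta\binom{n+1}{2}m$ integer for appropriate $m$. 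Without the quantitative ratio bound of part (2), one would only know non-negativity, which could in principle be spoiled by the $d$-dependent corrections; this strengthening of Rim\'anyi's conjecture is precisely what makes the effective threshold work.
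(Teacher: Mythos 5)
Your proposal does not prove Theorem~\ref{morse} at all --- it proves (or rather outlines a proof of) Theorem~\ref{maintwo}, which \emph{applies} Theorem~\ref{morse}. You have written down a plan for decomposing $\calo_{\tilde{\calx}_n}(1)\otimes\tilde{\pi}^*K_X^{-\delta\binom{n+1}{2}}$ as a difference of nef bundles, reducing to the inequality $F^N - N F^{N-1}G > 0$, evaluating the resulting tautological integral via Theorem~\ref{maintechnical}, and estimating the coefficients using Conjecture~\ref{conj}. That is the content of Sections~\ref{sec:existence}--\ref{subsec:coeffs}, not of the statement you were asked about.

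Theorem~\ref{morse} itself is a general result of Demailly and Trapani, quoted from \cite{dem00,trap}, and the paper gives no proof of it --- nor should it, since it is a standard black box. A genuine proof would have to establish the asymptotic bound on alternating sums of cohomology dimensions $h^j(X,L^{\otimes m}\otimes E)$ in terms of the intersection numbers $F^{n-j}\cdot G^j$; the usual route is via the holomorphic Morse inequalities (either the original analytic version with the Monge--Amp\`ere measure of a metric with mixed curvature signs, or Demailly's algebraic approach through a filtration of $L^{\otimes m}$ coming from $L = F\otimes G^{-1}$ and Riemann--Roch on the graded pieces, using nefness of $F$ and $G$ to kill the higher cohomology errors). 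Your proposal contains none of this; it simply takes the inequality as given and moves on. So either you misread which statement was in scope, or you have conflated the lemma with its application. If the intent was truly to prove Theorem~\ref{morse}, the proposal is a complete miss.
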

 In order to apply this theorem we have to express $\calo_{\tilde{\calx}_{n}}(1)$ as a difference of nef bundles as follows. 

\begin{proposition}\label{nef}
Let $d\ge n+3$ and therefore $K_X$ ample. The following line bundles
are nef on $\tilde{\calx}_n$:
\begin{enumerate}
\item $\calo_{\tilde{\calx}_n}(1) \otimes \tilde{\pi}^*\calo_X(2n^2)$
\item $\tilde{\pi}^*\calo_X(2n^2)\otimes \tilde{\pi}^*K_X^{\d{n+1 \choose 2}}$ for any $\d>0$ and $\d {n+1 \choose 2}$ integer.
\end{enumerate}
\end{proposition}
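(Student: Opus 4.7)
The plan is to prove (1) and (2) separately: (2) is a direct positivity computation on $X$ combined with pullback, while (1) is a Pl\"ucker-type argument that reduces nefness to the global generation of $\symdotx \otimes \calo_X(2n)$, which in turn follows from the Euler sequence on $\PP^{n+1}$ and adjunction.

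For (2), adjunction on the smooth hypersurface $X \subset \PP^{n+1}$ of degree $d \geq n+3$ gives $K_X = \calo_X(d-n-2)$. Hence $\calo_X(2n^2) \otimes K_X^{\delta{n+1 \choose 2}} \cong \calo_X\bigl(2n^2 + \delta{n+1 \choose 2}(d-n-2)\bigr)$, which is ample on $X$ whenever $\delta{n+1 \choose 2}$ is a positive integer. Pulling back via the proper morphism $\tilde{\pi}: \tilde{\calx}_n \to X$ preserves nefness, establishing (2) in essentially one line.

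For (1), the first step is to show that $T_X^*(2) = \Omega_X(2)$ is globally generated on $X$. On $\PP^{n+1}$ the bundle $\Omega_{\PP^{n+1}}(2)$ is globally generated (for instance by the sections $x_j\, dx_i - x_i\, dx_j$, which span the fibre at every point). Restricting to $X$ and twisting the conormal sequence $0 \to \calo_X(-d) \to \Omega_{\PP^{n+1}}|_X \to \Omega_X \to 0$ by $\calo_X(2)$, the quotient $\Omega_X(2)$ inherits global generation. Taking symmetric powers, $\Sym^i(T_X^*(2))$ is globally generated for each $1 \le i \le n$, and further twisting by $\calo_X(2n-2i)$ (globally generated since $2n-2i \ge 0$) shows that each summand $\Sym^i T_X^* \otimes \calo_X(2n) = \Sym^i(T_X^*(2)) \otimes \calo_X(2n-2i)$ is globally generated. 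Hence $E := \symdotx \otimes \calo_X(2n)$ is a globally generated vector bundle on $X$.

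The second step transports global generation from $E$ to a line bundle on the flag bundle. A surjection $\calo_X^N \twoheadrightarrow E$ induces a closed immersion of relative flag bundles $\flag_n(\symdotx) = \flag_n(E) \hookrightarrow \flag_n(\calo_X^N)$ over $X$, under which the Pl\"ucker line bundle on the right (generated by the $n \times n$ minors) restricts to the Pl\"ucker line bundle associated with $E$. By the fibrewise identity $\wedge^n(S \otimes L) = (\wedge^n S) \otimes L^{\otimes n}$ for the tautological rank $n$ subbundle $S$, this Pl\"ucker line bundle equals $\calo_{\flag_n(\symdotx)}(1) \otimes \pi^*\calo_X(2n^2)$. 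Hence the latter is globally generated, in particular nef; restriction to $\calx_n \subset \flag_n(\symdotx)$ preserves global generation, and pullback along $\rho: \tilde{\calx}_n \to \calx_n$ preserves nefness, yielding (1).

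The main bookkeeping obstacle is the Pl\"ucker twist computation: the exponent $2n^2 = (2n) \cdot n$ arises as the product of the uniform twist $2n$ needed to make every summand of $\symdotx$ simultaneously globally generated (the ``worst'' summand $\Sym^n T_X^*$ requiring $\calo_X(2n)$) and the rank $n$ of the tautological subbundle whose determinant defines the Pl\"ucker line bundle on $\flag_n(\symdotx)$.
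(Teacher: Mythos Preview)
Your proof is correct and follows essentially the same approach as the paper. Both arguments establish global generation of $T_X^*(2)$ via the Euler/conormal sequence, pass to symmetric powers, and then use a Pl\"ucker-type step to conclude that $\calo_{\tilde{\calx}_n}(1)\otimes\tilde{\pi}^*\calo_X(2n^2)$ is nef; part (2) is identical in both. The only difference is presentational: the paper writes a direct surjection of $\wedge^n$-bundles, whereas you explicitly uniformise the twists by tensoring $\Sym^i(T_X^*(2))$ with $\calo_X(2n-2i)$ and then invoke the embedding of relative flag bundles---this makes the appearance of the exponent $2n^2=(2n)\cdot n$ a bit more transparent, but the content is the same.
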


\proof
Let $\calo(m)$ denote the $m$-twisted tautological bundle on $\PP^{n+1}$. Then 
$T^*_{\PP^{n+1}}\otimes \calo(2)$ is globally generated, and there is a surjective bundle map
\[(T^*_{\PP^{n+1}}\otimes \calo(2))|_X^{\otimes m} \rightarrow T^*_X\otimes \calo_X(2)^{\otimes m},\]
therefore $T^*_X\otimes \calo_X(2)$ is globally generated. 
Consequently, the left hand side of the following surjective bundle map is globally generated, 
\begin{multline}\nonumber
\wedge^n\left(T^*_X\otimes \calo_X(2)\oplus \sym^2T^*_X\otimes \calo_X(4) \oplus \ldots \oplus \sym^nT_X^* \otimes \calo_X(2n) \right)\rightarrow \\
\wedge^n\left((T^*_X \oplus \sym^2T^*_X \oplus \ldots \oplus \sym^nT_X^*)\otimes \calo_X(2n)\right)=\wedge^n\left(T^*_X \oplus \ldots \oplus \sym^nT_X^* \right)\otimes \calo_X(2n^2),
\end{multline}
and therefore the right hand side is also globally generated. So
\[\calo_{\PP(\wedge^n(\symdotxn))}(1)\otimes \pi^*\calo_X(2n^2)\]
is nef on $\calx_n$, and therefore its pull-back with $\rho$ (see diagram \eqref{diagram}) is nef too. Thus the first part of Proposition \ref{nef} is proved.
The second part follows from the standard fact that the pull-back of an ample line bundle is nef.
\qed 

Consequently, we can express $\calo_{\tilde{\calx}_n}(1)\otimes \tilde{\pi}^*K_X^{-\d{n+1 \choose 2}}$ as the following difference of two nef line bundles:
\begin{equation*}
\calo_{\tilde{\calx}_n}(1)\otimes \tilde{\pi}^*K_X^{-\d{n+1 \choose 2}}=(\calo_{\tilde{\calx}_n}(1) \otimes
\tilde{\pi}^*\calo_X(2n^2))\otimes (\tilde{\pi}^*\calo_X(2n^2)\otimes \tilde{\pi}^*K_X^{\d{n+1 \choose 2}})^{-1}.
\end{equation*}
Theorem \ref{maintwo} follows from the Morse inequalities by proving that the following top form on $\tilde{\calx}_n$ is positive if $\d=\frac{1}{n^3(n+1)}$ and $d>D(n)=2n^{10}$:
\begin{multline}\label{intnumbersnowman}
I(n,\d,d)=\\
=c_1(\calo_{\tilde{\calx}_n}(1) \otimes \tilde{\pi}^*\calo_X(2n^2))^{n^2}-
n^2c_1(\calo_{\tilde{\calx}_n}(1) \otimes
\tilde{\pi}^*\calo_X(2n^2)^{(n^2-1)}c_1(\tilde{\pi}^*\calo_X(2n^2)\otimes
\tilde{\pi}^*K_X^{\d{n+1 \choose 2}}).
\end{multline}

Recall the notations $h=c_1(\calo_X(1)), u=c_1(\calo_{\tilde{\calx}_n}(1))$, and $c_1=c_1(T_X)$ for the corresponding first Chern classes. Then $c_1(K_X)=-c_1=(d-n-2)h$, and by dropping $\tilde{\pi}^*$ from our formula we define the following polynomial in $z_1,\ldots, z_n, h$:
\begin{multline}\label{rform}
I_{n,\d,d}(\bz,h)=(z_1+\ldots +z_n+2n^2h)^{n^2}-\\
-n^2(z_1+\ldots +z_n+2n^2h)^{n^2-1}(2n^2h+\d {n+1 \choose 2}(d-n-2)h).
\end{multline}
Integration over $X$ on both sides of \eqref{intnumberthree}  then gives
\begin{theorem}\label{finalformulaone} Let $I(n,\d,d)$ be the intersection number defined in \eqref{intnumbersnowman} on the Snowman Model $\tilde{\calx}_n$. Then 
\begin{equation*}
I(n,\d,d)=\int_X \sires \frac{
Q_n(\bz)\,\prod_{m<l}(z_m-z_l) I_{n,\d,d}(\bz,h)}{
\prod_{m+r\le l \le n}
(z_m+z_r-z_l)\prod_{l=1}^n\prod_{i=1}^n(\lambda_i-z_l)} \,\dbz
\end{equation*} 
where integration on the right hand side means the substitution $h^n=d$. 
\end{theorem}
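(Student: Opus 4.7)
\medskip

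\noindent\textbf{Proof proposal.} The plan is to identify $I(n,\delta,d)$ as a tautological integral of the form $\int_{\tilde{\calx}_n} P(u,h)$ for an explicit homogeneous polynomial $P$ of degree $n^2$, and then apply the fibrewise iterated residue formula \eqref{intnumberthree} followed by integration over the base $X$.

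\medskip

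\noindent\emph{Step 1 (rewriting the intersection number).} Since $X\subset\PP^{n+1}$ is a smooth hypersurface of degree $d$, adjunction gives $K_X=\calo_X(d-n-2)$, so that $c_1(\tilde{\pi}^*K_X)=(d-n-2)h$. Using the additivity of first Chern classes under tensor product, the two nef line bundles from Proposition \ref{nef} have
\begin{equation*}
c_1\!\left(\calo_{\tilde{\calx}_n}(1)\otimes\tilde{\pi}^*\calo_X(2n^2)\right)=u+2n^2h,\qquad
c_1\!\left(\tilde{\pi}^*\calo_X(2n^2)\otimes\tilde{\pi}^*K_X^{\delta\binom{n+1}{2}}\right)=2n^2h+\delta\tbinom{n+1}{2}(d-n-2)h.
\end{equation*}
Substituting into the definition \eqref{intnumbersnowman}, we obtain $I(n,\delta,d)=\int_{\tilde{\calx}_n}P(u,h)$, where $P(u,h)$ is the polynomial
\begin{equation*}
P(u,h)=(u+2n^2h)^{n^2}-n^2(u+2n^2h)^{n^2-1}\bigl(2n^2h+\delta\tbinom{n+1}{2}(d-n-2)h\bigr),
\end{equation*}
which is homogeneous of degree $n^2=n+n(n-1)=\dim\tilde{\calx}_n$. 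By construction, $P(z_1+\cdots+z_n,h)$ agrees with the polynomial $I_{n,\delta,d}(\bz,h)$ from \eqref{rform}.

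\medskip

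\noindent\emph{Step 2 (applying the fibrewise residue formula).} The map $\tilde{\pi}:\tilde{\calx}_n\to X$ realises $\tilde{\calx}_n$ as a locally trivial bundle with fibre $\tilde{X}_n$, so the intersection number can be computed by integrating over the fibre and then over the base,
\begin{equation*}
\int_{\tilde{\calx}_n}P(u,h)=\int_X\tilde{\pi}_*\bigl(P(u,h)\bigr).
\end{equation*}
The fibrewise pushforward $\tilde{\pi}_*$ is computed by applying Theorem \ref{vanishtheorem} to the fibre $\tilde{X}_n$ with $\alpha(u)=P(u,h)$ (treating $h$ as a constant pulled back from $X$). By formula \eqref{intnumberthree} with $k=n$ and $\alpha(z_1+\cdots+z_n,h)=I_{n,\delta,d}(\bz,h)$, the pushforward equals
\begin{equation*}
\sires\frac{Q_n(\bz)\,\prod_{m<l}(z_m-z_l)\,I_{n,\delta,d}(\bz,h)}{\prod_{m+r\le l\le n}(z_m+z_r-z_l)\,\prod_{l=1}^n\prod_{i=1}^n(\lambda_i-z_l)}\,\dbz,
\end{equation*}
where $\lambda_1,\ldots,\lambda_n$ are now understood as the Chern roots of $T_X$ (so that the denominator $\prod_i(\lambda_i-z_l)$ expresses $z_l^n\,c(1/z_l)$ via the splitting principle, as in Remark \ref{remarkq}(3)).

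\medskip

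\noindent\emph{Step 3 (integration over $X$).} Integrating both sides over $X$ yields the claimed identity; the residue and the integral over $X$ commute because the residue is a finite algebraic operation on the coefficients of the Laurent expansion. The final evaluation on $X$ is carried out using the hypersurface relation $\int_X h^n=d$, which is what is meant by the shorthand $h^n=d$ in the statement of the theorem. The symmetric functions of $\lambda_1,\ldots,\lambda_n$ appearing after expansion of the denominator become Chern classes of $T_X$, which for $X\subset\PP^{n+1}$ of degree $d$ are polynomials in $h$ via the normal bundle sequence; thus the whole right-hand side becomes a polynomial in $h$ whose degree-$n$ part is extracted by the rule $h^n=d$.

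\medskip

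\noindent\emph{Main obstacle.} All of the substantive analytic work---the localisation on the two-step Snowman Model, the cancellation of non-distinguished fixed points, and the divisibility of $Q_n$ by the relevant weights---is already contained in Theorem \ref{vanishtheorem}. Hence the only nontrivial bookkeeping in the present proof is (i) checking that the polynomial $P(u,h)$ coming from the Chern class manipulation in Step 1 really does reproduce $I_{n,\delta,d}(\bz,h)$ upon the substitution $u\mapsto z_1+\cdots+z_n$, and (ii) being careful that the fibrewise residue formula yields a class in $H^\bullet(X)$ before one integrates, so that the final substitution $h^n=d$ is legitimate. Both are routine once the identifications of Chern classes in Step 1 are in place.
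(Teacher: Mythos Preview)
Your proposal is correct and follows essentially the same approach as the paper: the paper's entire argument is the single line ``Integration over $X$ on both sides of \eqref{intnumberthree} then gives'' Theorem \ref{finalformulaone}, and you have simply unpacked this into its constituent steps (Chern-class identification, fibrewise residue, integration over the base). One small slip: in Step 2 you write that $\lambda_1,\ldots,\lambda_n$ are the Chern roots of $T_X$, but in the paper's conventions they are the Chern roots of $T_X^*$ (so that $-\lambda_i$ are the Chern roots of $T_X$); this does not affect the structure of your argument.
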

Here $\lambda_1,\ldots, \lambda_n$ represent the Chern roots of $\cotx$, and therefore $-\l_1,\ldots ,-\l_n$ are the Chern roots of $T_X$. Since $X\subset \PP^{n+1}$ is a projective htpersurface we can eliminate these using that the Chern classes of $X$ are expressible with $d=\deg(X),h$ via
\[(1+h)^{n+2}=(1+dh)c(X),\]
where $c(X)=c(T_X)$ is the total Chern class of $X$. This gives 
\begin{multline}
\frac{1}{\prod_{l=1}^n\prod_{i=1}^n(\lambda_i-z_l)}=\frac{(-1)^n}{(z_1\ldots z_n)^n}\frac{1}{\prod_{i,l=1}^n(1-\frac{\l_i}{z_l})}=\frac{(-1)^n}{(z_1\ldots z_n)^n}\frac{1}{\prod_{l=1}^n c(1/z_l)}=\\
=\frac{(-1)^n}{(z_1\ldots z_n)^n}\prod_{l=1}^n\frac{1+\frac{dh}{z_l}}{(1+\frac{h}{z_l})^{n+2}}=\frac{(-1)^n}{(z_1\ldots z_n)^n}\prod_{l=1}^n\left(1+\frac{dh}{z_l}\right)\prod_{l=1}^n\left(1-\frac{h}{z_l}+\frac{h^2}{z_l^2}-\ldots \right)^{n+2}
\end{multline}
and therefore Theorem \ref{finalformulaone} can be rewritten as 
\begin{multline}\label{finalformulatwo}
I(n,\d,d)=\\
\int_X \sires \underbrace{\frac{(-1)^n
Q_n(\bz)\prod_{m<l}(z_m-z_l)I_{n,\d,d}(\bz,h)\dbz}{
\prod_{m+r\le l \le n}
(z_m+z_r-z_l) (z_1\ldots z_n)^n }}_{A^1} \underbrace{\prod_{l=1}^n\left(1+\frac{dh}{z_l}\right)\prod_{l=1}^n\left(1-\frac{h}{z_l}+\ldots \right)^{n+2}}_{A^2}
\end{multline}
where from  \eqref{rform} 
\begin{multline}\nonumber
I_{n,\d,d}(\bz,h)=(z_1+\ldots +z_n+2n^2h)^{n^2-1}\cdot \\ \cdot \left(z_1+\ldots +z_n-\d n^2{n+1 \choose 2}dh-\left(2n^4-n^2\d (n+2){n+1 \choose 2}-2n^2\right)h\right).
\end{multline}

The residue is by definition the coefficient of $\frac{1}{z_1\ldots z_n}$ in the appropriate Laurent expansion of the big rational expression in $z_1,\ldots, z_n,n, d, h$ and $\d$, multiplied by $(-1)^n$. We can therefore omit the $(-1)^n$ factor from the numerator and simply compute the corresponding coefficient. The result is a polynomial in $n,d,h,\d$, and in fact, a relatively easy argument shows that it is a polynomial in $n,d,\d$ multiplied by $h^n$ 

Indeed, giving degree $1$ to $z_1,\ldots ,z_n,h$ and $0$ to $n,d,\d$, the rational expression in the residue has total degree $0$. Therefore the coefficient of $\frac{1}{z_1\ldots z_n}$ has degree $n$, so it has the form $h^n p(n,d,\d)$ with a polynomial $p$. Since $\int_X h^n=d$, integration over $X$ is simply a substitution $h^n=d$, resulting in the equation $I(n,\d,d)=dp(n,\d,d)$ for some polynomial $p(n,\delta,d)$.

\subsection{A first look at the iterated residue formula}\label{subsec:residue}

To overcome the difficulties in handling the rational expression in \eqref{finalformulatwo}, we introduce the following notations.
 
\noindent \textbf{Notation} For $\bi=(i_1,\ldots i_n) \in \ZZ^n$ let
\begin{equation}\label{theta}
\rho_{\bi}=\coeff_{\bz^{\bi}}\frac{
 Q_n(\bz)\,\prod_{m<l}(z_m-z_l) (z_1+\ldots +z_n)^{n^2+i_1+\ldots +i_n}}{
\prod_{m+r\le l \le n}
(z_m+z_r-z_l) (z_1\ldots z_n)^n }
\end{equation}
stand for the coefficient of $\bz^{\bi}$. The total degree of the rational expression in \eqref{theta} is $\Sigma\bi=i_1+\ldots +i_n$ and therefore the coefficient of $z_1^{i_1}\ldots z_n^{i_n}$ might be nonzero. 

\begin{proposition}\label{leadingcoeff}
\begin{enumerate}
\item
$I(n,\d,d)$ is a polynomial in $d$ of degree $n+1$ without constant term:
\[I(n,\d,d)=p_{n+1}(n,\d)d^{n+1}+p_{n}(n,\d)d^{n}+\ldots +p_1(n,\d)d\]
where $p_i(n,\d)$ is linear in $\d$ and polynomial in $n$ for all $i$.
\item 
The leading coefficient is $p_{n+1}(n,\d)=\left(1-n^2{n+1 \choose 2}\delta\right)\rho_{\mathbf{0}}$.
\end{enumerate}
\end{proposition}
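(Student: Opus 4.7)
The plan is to isolate the $d$-dependence of the integrand in \eqref{finalformulatwo} and compute the coefficient of $d^{n+1}$ directly. In the $\sires$-integrand, exactly two factors contain $d$: the polynomial $\prod_{l=1}^n(1+dh/z_l)$, of degree $n$ in $d$, and $I_{n,\d,d}(\bz,h)$, which is linear in $d$ since the only $d$-dependent term is $-\d n^2\binom{n+1}{2}dh$. Together with the extra factor $d=\int_X h^n$ produced by integration over $X$, this gives $\deg_d I(n,\d,d)\le n+1$ and $p_0=0$. Linearity of each $p_i$ in $\d$ is immediate from the regrouping
\[I_{n,\d,d}(\bz,h)=(S+2n^2h)^{n^2-1}\bigl[(S-(2n^4-2n^2)h)+\d\,n^2\tbinom{n+1}{2}(n+2-d)h\bigr],\]
with $S=z_1+\cdots+z_n$; no other factor in \eqref{finalformulatwo} depends on $\d$. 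Polynomiality in $n$ for each fixed $i$ follows from the explicit rational-polynomial form of the integrand.

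For $p_{n+1}$ I would split $I_{n,\d,d}=I_0(h)+dI_1(h)$ with $I_1(h)=-\d n^2\binom{n+1}{2}h(S+2n^2h)^{n^2-1}$, and expand $\prod_l(1+dh/z_l)=\sum_{k=0}^n(dh)^k e_k(1/z_1,\ldots,1/z_n)$. The coefficient of $d^n$ in $I_{n,\d,d}\prod_l(1+dh/z_l)$ is then
\[\frac{I_0(h)\,h^n}{z_1\cdots z_n}+I_1(h)\,h^{n-1}e_{n-1}(1/z_1,\ldots,1/z_n).\]
Writing $G(h):=\prod_l(1+h/z_l)^{-(n+2)}=1+O(h)$ for the remaining $h$-dependent factor, extracting the coefficient of $h^n$ in each summand multiplied by $G(h)$ collapses the computation to evaluating the non-$h^n$ factors at $h=0$. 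Using $I_0(0)=S^{n^2}$, $G(0)=1$, the $h$-linear coefficient of $I_1$ equal to $-\d n^2\binom{n+1}{2}S^{n^2-1}$, and the elementary identity $e_{n-1}(1/z_1,\ldots,1/z_n)=S/(z_1\cdots z_n)$, the two contributions combine into
\[[d^nh^n]\bigl(I_{n,\d,d}\prod_l(1+dh/z_l)\,G(h)\bigr)=\frac{S^{n^2}}{z_1\cdots z_n}\bigl(1-n^2\tbinom{n+1}{2}\d\bigr).\]

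Multiplying this by the $d$- and $\d$-independent prefactor $\frac{(-1)^n Q_n(\bz)\prod_{m<l}(z_m-z_l)}{\prod_{m+r\le l\le n}(z_m+z_r-z_l)(z_1\cdots z_n)^n}$ and applying $\sires$, which by its sign convention extracts $(-1)^n$ times the coefficient of $(z_1\cdots z_n)^{-1}$, the two $(-1)^n$ factors cancel and what remains is exactly the constant term of the rational expression defining $\rho_{\mathbf{0}}$ in \eqref{theta} at $\bi=\mathbf{0}$. Integration over $X$ via $h^n=d$ then yields $p_{n+1}(n,\d)=(1-n^2\binom{n+1}{2}\d)\rho_{\mathbf{0}}$. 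I expect the main obstacle to be bookkeeping: aligning the $(-1)^n$ from the $\sires$ convention with the $(-1)^{n^2}$ arising in the Chern-class rewriting $1/\prod_{i,l}(\lambda_i-z_l)=(-1)^{n^2}(z_1\cdots z_n)^{-n}\prod_l c(1/z_l)$, and verifying that higher $h$-powers do not contribute after the top-dimensional integration. The clean factorization into $1-n^2\binom{n+1}{2}\d$ rests entirely on the $e_{n-1}$-identity, which forces both $d$-sources to channel through the same $\rho_{\mathbf{0}}$.
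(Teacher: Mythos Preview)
Your proof is correct and follows essentially the same approach as the paper. The only difference is in the order of operations for the second contribution to $p_{n+1}$: the paper first expresses it as $-n^2\binom{n+1}{2}\delta\sum_{s=1}^n \rho_{-e_s}$ and then proves the identity $\sum_s \rho_{-e_s}=\rho_{\mathbf 0}$ by pulling the factor $z_s$ inside the coefficient extraction, whereas you use the equivalent statement $e_{n-1}(1/z_1,\ldots,1/z_n)=S/(z_1\cdots z_n)$ \emph{before} taking the residue, so the factorisation $(1-n^2\binom{n+1}{2}\delta)$ appears immediately. These are the same identity written at different stages, so the two arguments coincide. Your bookkeeping worry about $(-1)^{n^2}$ versus $(-1)^n$ is harmless since $n^2\equiv n\pmod 2$, and the ``higher $h$-powers'' issue is resolved exactly by the homogeneity argument the paper states just before the proposition (which also justifies your step of evaluating the non-$h^n$ factors at $h=0$).
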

\proof
The first part follows from the previous remarks. 
To prove the second part we study the formula \eqref{finalformulatwo}. To get $d^{n+1}$ we either have to choose all the $\frac{dh}{z_l}$ terms in the product $\prod_{l=1}^n\left(1+\frac{dh}{z_l} \right)$, or we need to pick the $\frac{dh}{z_s}$ term in $A^1$ and pair up with the terms $\frac{dh}{z_l}$, $l\neq s$ in the product $\prod_{l=1}^n\left(1+\frac{dh}{z_l} \right)$. This gives us
\begin{equation}\nonumber
p_{n+1}(n,\d)=\rho_{\mathbf{0}}-n^2{n+1 \choose 2}\delta\sum_{s=1}^n \rho_{-e_s} 
\end{equation}
where $e_s=(0,\ldots, 1, \ldots, 0)$ stands for the $s$th basis vector.
By definiton 
\begin{multline}\nonumber 
\sum_{s=1}^n \rho_{-e_s}=\sum_{s=1}^n \coeff_{z_s^{-1}}\frac{
 (-1)^nQ_n(\bz)\,\prod_{m<l}(z_m-z_l) (z_1+\ldots +z_n)^{n^2-1}}{
\prod_{m+r\le l \le n}
(z_m+z_r-z_l) (z_1\ldots z_n)^n }=\\
=\sum_{s=1}^n \coeff_{\bz^{\mathbf{0}}}\frac{
(-1)^n Q_n(\bz)\,\prod_{m<l}(z_m-z_l) z_s(z_1+\ldots +z_n)^{n^2-1}}{
\prod_{m+r\le l \le n}
(z_m+z_r-z_l) (z_1\ldots z_n)^n }=\rho_{\mathbf{0}}.
\end{multline}
\qed 

A necessary condition for $I(n,\d,d)>0$ to be positive for any $d\gg 0$ is a positive leading coefficient, that is, $(1-n^2{n+1 \choose 2}\delta)\rho_{\mathbf{0}}>0$. Using the Thom polynomial generating function \eqref{tpgenerating} introduced in the Introduction and the definition of $\rho_{\mathbf{0}}$ we can write
\begin{equation}\label{rho0}
\rho_{\mathbf{0}}=\sum_{\bi\in (\ZZ^{\ge 0})^n,\Sigma \bi=n^2}\Tp_{(n,\ldots ,n)-\bi}{n^2 \choose i_1\  i_2\  \ldots i_n}.
\end{equation}
By Remark \ref{remarkq} and the positivity property of the multidegree (see Proposition \ref{epdprops})  
\[Q_n(\bz)=P(z_m+z_r-z_l:m+r\le l\le n)\]
is a polynomial in the weight variables $z_m+z_r-z_l$ with positive integer coefficient. More precisely, we saw in Sect.~\ref{subsec:epdmult} (see \eqref{mdegformula}) that the monomials of $P$ are equivariant duals of the irreducible components of a flat deformation of the cone $N_k$ of Remark \ref{remarkq}. These irreducible components are subspaces with positive multiplicities (see \eqref{mdegformula}) and $\Tp_{\mathbf{0}}$ is the sum of these multiplicities (see \cite{thesis}) therefore it is a positive integer. That is, at least one term in \eqref{rho0} is positive, and the first part of Conjecture \ref{conj} implies that $\rho_{\mathbf{0}}>0$. 
%\begin{remark} 
%A more geometric reason for the positivity of $\rho_{\mathbf{0}}$ is the following. By  Proposition \ref{nef} $\calo_{\calx_n}(1)\otimes \pi^*\calo_X(2n^2)$ is nef on $\calx_n$. Assuming that $\calo_{\calx_n}(1)\otimes \pi^*\calo_X(N)$ is ample for $N$ big enough, and applying Theorem \ref{vanishtheorem} for $n=k$ and $R(u,h)=(u+Nh)^{n^2}$ we get 
%\begin{multline}\nonumber
%\int_{\tilde{\calx}_n}(u+Nh)^{n^2}=\\
%\int_X \sires \frac{(-1)^n
%Q_n(\bz)\prod_{m<l}(z_m-z_l)(z_1+\ldots +z_n+Nh)^{n^2}A^2(\bz,d,h)\dbz}{
%\prod_{m+r\le l \le n}
%(z_m+z_r-z_l) (z_1\ldots z_n)^n}=\rho_{\mathbf{0}}d^{n+1}+\ldots >0 
%\end{multline}
%and therefore its leading coefficient $\rho_{\mathbf{0}}>0$ is positive  However, we can't prove the %ampleness of $\calo_{\calx_n}(1)\otimes \pi^*\calo_X(N)$ for big enough $N$.
%\end{remark}
A straightforward corollary is  
\begin{proposition}\label{posleading}
For $\delta < \frac{2}{n^3(n+1)}$ the leading coefficient $p_{n+1}(n,\delta)>0$ is positive, and therefore $I_X(d,n,\d)>0$ for $d\gg 0$. 
\end{proposition}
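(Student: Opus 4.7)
The plan is to combine the factorization of the leading coefficient supplied by Proposition \ref{leadingcoeff}(2),
\[p_{n+1}(n,\delta) = \left(1 - n^2\binom{n+1}{2}\delta\right)\rho_{\mathbf{0}},\]
with the positivity of $\rho_{\mathbf{0}}$ derived just above the statement, and then read off the sign of $I(n,\delta,d)$ for large $d$ from the fact that it is a polynomial in $d$ with this leading coefficient.

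First I would evaluate the arithmetic factor explicitly: since $n^2\binom{n+1}{2} = \tfrac{n^3(n+1)}{2}$, the inequality $1 - n^2\binom{n+1}{2}\delta > 0$ is equivalent to $\delta < \frac{2}{n^3(n+1)}$, which is exactly the hypothesis on $\delta$. Hence the scalar factor in the factorization of $p_{n+1}(n,\delta)$ is strictly positive under the assumption.

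Next I would invoke the positivity of $\rho_{\mathbf{0}}$ already argued in the paragraph containing \eqref{rho0}. In the sum
\[\rho_{\mathbf{0}} = \sum_{\bi\in(\ZZ^{\geq 0})^n,\ \Sigma\bi = n^2} \Tp_{(n,\ldots,n)-\bi}\binom{n^2}{i_1\ i_2\ \ldots\ i_n},\]
each multinomial coefficient is strictly positive; by part (1) of Conjecture \ref{conj} every Thom polynomial coefficient $\Tp_{(n,\ldots,n)-\bi}$ is nonnegative; and the term corresponding to $\bi = (n,\ldots,n)$ picks out $\Tp_{\mathbf{0}}$, which by Remark \ref{remarkq} together with Proposition \ref{epdprops} and formula \eqref{mdegformula} is the strictly positive integer obtained as a sum of multiplicities of the top-dimensional components of a flat degeneration of $\OO_n\subset N_n$. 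Consequently $\rho_{\mathbf{0}} > 0$, so $p_{n+1}(n,\delta)$ is the product of two strictly positive quantities.

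Finally, by Proposition \ref{leadingcoeff}(1), $I(n,\delta,d) = p_{n+1}(n,\delta)d^{n+1} + \cdots + p_1(n,\delta)d$ is a polynomial in $d$ of degree exactly $n+1$ whose leading coefficient we have just shown is positive. It follows that $I(n,\delta,d) > 0$ for all sufficiently large $d$. This is essentially a bookkeeping step once the factorization is in hand; the only genuine mathematical input is the nonnegativity assertion of Conjecture \ref{conj}(1) (used to control the signs of the individual Thom coefficients in \eqref{rho0}), so I do not anticipate any technical obstacle beyond transcribing these observations.
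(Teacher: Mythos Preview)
Your proposal is correct and follows exactly the same route as the paper: the paper presents Proposition~\ref{posleading} as a ``straightforward corollary'' of Proposition~\ref{leadingcoeff}(2) together with the positivity of $\rho_{\mathbf{0}}$ established in the preceding paragraph via \eqref{rho0}, Conjecture~\ref{conj}(1), and the positivity of $\Tp_{\mathbf{0}}$. You have simply spelled out the arithmetic identity $n^2\binom{n+1}{2}=\tfrac{n^3(n+1)}{2}$ and the elementary fact about polynomials with positive leading coefficient, which the paper leaves implicit.
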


According to Proposition \ref{leadingcoeff} (2), we cannot expect a better than polynomial bound for the Green-Griffiths-Lang conjecture from this model. We fix $\delta$ to be $\delta=\frac{1}{n^3(n+1)}$ for the rest of the paper.  

\subsection{Estimation of the coefficients}\label{subsec:coeffs} According to Proposition \ref{leadingcoeff} we have to prove the positivity of the polynomial $I(n,\d,d)=p_{n+1}(n,\d)d^{n+1}+p_{n}(n,\d)d^{n}+\ldots +p_1(n,\d)d$ whose leading coefficient is positive due to Proposition \ref{posleading}. We prove this by showing that 
\[|p_{n+1-l}|<n^{10l}p_{n+1}\]
holds for $1\le l \le n+1$ and then by using the following elementary statement:
\begin{lemma}[Fujiwara bound]\label{estimation}
If $p(d)=p_nd^n+p_{n-1}d^{n-1}+\ldots +p_1d+p_0\in \RR[d]$ satisfies the inequalities 
\[p_n>0;\ \ |p_{n-l}|<D^l |p_n| \text{ for } l=1,\ldots n,\]
then $p(d)>0$ for $d>2D$.  
\end{lemma}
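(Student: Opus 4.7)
The plan is to prove this by the standard dominant-term comparison: for $d > 2D$ the leading monomial $p_n d^n$ alone exceeds the sum of the absolute values of all lower terms, by a geometric series estimate.

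First I would set $r = D/d$. Since $d > 2D > 0$ we have $0 < r < 1/2$, hence $r/(1-r) < 1$ and consequently
\[
\sum_{l=1}^{n} r^{l} \;\le\; \sum_{l=1}^{\infty} r^{l} \;=\; \frac{r}{1-r} \;<\; 1.
\]
Next I would use the hypothesis $|p_{n-l}| < D^{l} p_n$ (valid for $l = 1, \ldots, n$, using $p_n > 0$) to bound each lower-order term:
\[
|p_{n-l}\, d^{n-l}| \;<\; D^{l} p_n\, d^{n-l} \;=\; p_n d^{n}\, r^{l}.
\]

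Then I would assemble a triangle-inequality estimate:
\[
p(d) \;\ge\; p_n d^{n} - \sum_{l=1}^{n} |p_{n-l}|\, d^{n-l} \;>\; p_n d^{n}\Bigl(1 - \sum_{l=1}^{n} r^{l}\Bigr) \;>\; 0,
\]
where the last inequality uses $p_n > 0$ together with the geometric bound established in the first step. This completes the argument.

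There is essentially no obstacle: the only subtlety is making sure the strict inequalities line up, which requires the bound $r < 1/2$ coming from $d > 2D$ (equality $d = 2D$ would only give $\sum r^l < 1$ for finite $n$ as well, so in fact the bound $d \ge 2D$ would already suffice, but the statement as written with $d > 2D$ is what we prove). The whole argument is at most a few lines and does not require any additional machinery from the paper.
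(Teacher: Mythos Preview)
Your proof is correct and is the standard geometric-series argument for this classical bound. The paper itself does not supply a proof of Lemma~\ref{estimation}; it simply labels it the ``Fujiwara bound'' and calls it an ``elementary statement,'' so there is nothing to compare against beyond noting that your argument is exactly the short computation one expects.
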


Let $\delta=\frac{1}{n^3(n+1)}$ be fixed. If the degree of $z_1,\ldots , z_n$ and $h$ is $1$, then the denominator and numerator of $A^1$ are homogeneous polynomials of the same degree, and therefore in the Laurent expansion we have terms
\[\frac{(dh)^{\varepsilon}h^{m}\bz^{\mathbf{a}}}{\bz^{\mathbf{b}}} \text{ with } \baa,\bbb\in \ZZ_{\ge 0}^n,\ \varepsilon=0 \text{ or } 1,  \varepsilon+m+\Sigma \mathbf{a}=\Sigma \mathbf{b},\ a_ib_i=0 \text{ for } 1\le i \le n.\] 
Let $A^1_{(dh)^\varepsilon h^m \bz^{\mathbf{a}-\mathbf{b}}}$ denote the coefficient of this term in $A^1$. With $\delta=\frac{1}{n^3(n+1)}$ we have
\begin{equation}\label{etatheta}
A^1_{(dh)^1 h^{\Sigma \mathbf{b}-\Sigma \mathbf{a}-1} \bz^{\mathbf{a}-\mathbf{b}}}=-\rho_{\mathbf{a}-\mathbf{b}}\frac{1}{2}{n^2-1 \choose \Sigma \mathbf{b}-\Sigma \mathbf{a}-1}(2n^2)^{\Sigma \mathbf{b}-\Sigma \mathbf{a}-1}
\end{equation}
and 
\begin{multline}\nonumber
A^1_{(dh)^0 h^{\Sigma \mathbf{b}-\Sigma \mathbf{a}} \bz^{\mathbf{a}-\mathbf{b}}}=\\
\rho_{\mathbf{a}-\mathbf{b}}
\left({n^2-1 \choose \Sigma \mathbf{b}-\Sigma \mathbf{a}}(2n^2)^{\Sigma \mathbf{b}-\Sigma \mathbf{a}}-(2n^4-2n^2-\frac{1}{2}(n+2)){n^2-1 \choose \Sigma \mathbf{b}-\Sigma \mathbf{a}-1}(2n^2)^{\Sigma \mathbf{b}-\Sigma \mathbf{a}-1} \right).
\end{multline}
These give us $\left|A^1_{(dh)^0 h^{\Sigma \mathbf{b}-\Sigma \mathbf{a}-1} \bz^{\mathbf{a}-\mathbf{b}}}\right|<2n^4\left|A^1_{(dh)^1 h^{\Sigma \mathbf{b}-\Sigma \mathbf{a}-1} \bz^{\mathbf{a}-\mathbf{b}}}\right|$ and therefore since 
\[A^1_{h^{\Sigma \mathbf{b}-\Sigma \mathbf{a}} \bz^{\mathbf{a}-\mathbf{b}}}=dA^1_{(dh)^1 h^{\Sigma \mathbf{b}-\Sigma \mathbf{a}-1} \bz^{\mathbf{a}-\mathbf{b}}}+A^1_{(dh)^0 h^{\Sigma \mathbf{b}-\Sigma \mathbf{a}} \bz^{\mathbf{a}-\mathbf{b}}},\] we arrive at 
\begin{lemma}\label{etalemma}
For $\delta=\frac{1}{n^3(n+1)}$ and $d>2n^5$ we have 
$\left|A^1_{(dh)^0 h^{\Sigma \mathbf{b}-\Sigma \mathbf{a}} \bz^{\mathbf{a}-\mathbf{b}}}\right|<\frac{1}{n}\left|A^1_{h^{\Sigma \mathbf{b}-\Sigma \mathbf{a}} \bz^{\mathbf{a}-\mathbf{b}}}\right|$
and therefore for $\Sigma \mathbf{b}-\Sigma \mathbf{a}\ge 1$
\begin{equation}\label{etaineqtwo}
A^1_{h^{\Sigma \mathbf{b}-\Sigma \mathbf{a}} \bz^{\mathbf{a}-\mathbf{b}}}=dC_{\baa,\bbb}A^1_{(dh)^1 h^{\Sigma \mathbf{b}-\Sigma \mathbf{a}-1} \bz^{\mathbf{a}-\mathbf{b}}} \text{ for some } 1-\frac{1}{n}<|C_{\baa,\bbb}| <1+\frac{1}{n}.
\end{equation}
\end{lemma}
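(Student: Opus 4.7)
The plan is a direct calculation from the two closed-form expressions for $A^1_{(dh)^1 h^{j-1}\bz^{\mathbf{a}-\mathbf{b}}}$ and $A^1_{(dh)^0 h^{j}\bz^{\mathbf{a}-\mathbf{b}}}$ displayed immediately above the lemma, where I write $j := \Sigma\mathbf{b} - \Sigma\mathbf{a} \ge 1$. Both share the common prefactor $\rho_{\mathbf{a}-\mathbf{b}} \binom{n^2-1}{j-1}(2n^2)^{j-1}$, so using the binomial identity $\binom{n^2-1}{j} = \binom{n^2-1}{j-1} \cdot \frac{n^2-j}{j}$ to factor it out of the formula for $A^1_{(dh)^0 h^j \bz^{\mathbf{a}-\mathbf{b}}}$ reduces the ratio of the two coefficients to a single rational function of $j$:
\[
\frac{A^1_{(dh)^0 h^j \bz^{\mathbf{a}-\mathbf{b}}}}{A^1_{(dh)^1 h^{j-1}\bz^{\mathbf{a}-\mathbf{b}}}} \;=\; -2E(j), \qquad E(j) \;:=\; \frac{(n^2-j)\cdot 2n^2}{j} \;-\; \Bigl(2n^4 - 2n^2 - \tfrac{n+2}{2}\Bigr).
\]

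Next I would bound $|E(j)|$ uniformly on $j \in \{1,\ldots,n^2\}$. A direct derivative computation gives $E'(j) = -2n^4/j^2 < 0$, so $E$ is strictly decreasing; evaluating at the endpoints yields $E(1) = (n+2)/2 > 0$ and $E(n^2) = -\bigl(2n^4 - 2n^2 - (n+2)/2\bigr) < 0$, hence $|E(j)| < 2n^4$ uniformly. Together with the ratio formula above, this recovers the bound $|A^1_{(dh)^0 h^j \bz^{\mathbf{a}-\mathbf{b}}}| < 2n^4 \,|A^1_{(dh)^1 h^{j-1}\bz^{\mathbf{a}-\mathbf{b}}}|$ already stated (up to a harmless numerical factor) in the paragraph preceding the lemma.

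With the ratio bound in hand, both inequalities of the lemma follow routinely from the identity
\[
A^1_{h^j \bz^{\mathbf{a}-\mathbf{b}}} \;=\; d \cdot A^1_{(dh)^1 h^{j-1}\bz^{\mathbf{a}-\mathbf{b}}} \;+\; A^1_{(dh)^0 h^j \bz^{\mathbf{a}-\mathbf{b}}},
\]
which merely separates the $h^j \bz^{\mathbf{a}-\mathbf{b}}$-coefficient of $A^1$ by its dependence on $d$. Dividing by $d \cdot A^1_{(dh)^1 h^{j-1}\bz^{\mathbf{a}-\mathbf{b}}}$ gives $C_{\mathbf{a},\mathbf{b}} = 1 + \frac{A^1_{(dh)^0 h^j \bz^{\mathbf{a}-\mathbf{b}}}}{d\cdot A^1_{(dh)^1 h^{j-1}\bz^{\mathbf{a}-\mathbf{b}}}}$, so combining with $d > 2n^5$ yields $|C_{\mathbf{a},\mathbf{b}} - 1| \le 2n^4/d < 1/n$, establishing the two-sided bound $1 - 1/n < |C_{\mathbf{a},\mathbf{b}}| < 1 + 1/n$. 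For the first inequality, the reverse triangle inequality applied to the same identity gives $|A^1_{h^j \bz^{\mathbf{a}-\mathbf{b}}}| \ge (d - 2n^4)|A^1_{(dh)^1 h^{j-1}\bz^{\mathbf{a}-\mathbf{b}}}|$, hence $|A^1_{(dh)^0 h^j \bz^{\mathbf{a}-\mathbf{b}}}|/|A^1_{h^j \bz^{\mathbf{a}-\mathbf{b}}}| \le 2n^4/(d - 2n^4) < 1/n$ whenever $d > 2n^5$.

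There is no genuine obstacle here: the entire content of the lemma is the factorization in Step 1 and the monotonicity check on $E$ in Step 2, after which the two estimates are pure triangle-inequality bookkeeping. The only care needed is tracking the numerical constant $2n^4$ sharply enough so that the hypothesis $d > 2n^5$ exactly produces the stated $1/n$ bound, which is purely a matter of constants and causes no difficulty.
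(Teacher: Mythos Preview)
Your approach is essentially the same as the paper's: derive the ratio bound between $A^1_{(dh)^0}$ and $A^1_{(dh)^1}$ from the two explicit formulas displayed before the lemma, then plug into the decomposition $A^1_{h^j\bz^{\mathbf a-\mathbf b}}=d\,A^1_{(dh)^1 h^{j-1}\bz^{\mathbf a-\mathbf b}}+A^1_{(dh)^0 h^{j}\bz^{\mathbf a-\mathbf b}}$. Your explicit rewriting of the ratio as $-2E(j)$ with $E(j)=2n^4(1/j-1)+(n+2)/2$ and the monotonicity argument is a clean way to make the paper's one-line assertion precise.

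One small bookkeeping slip: since the ratio equals $-2E(j)$ and you bound $|E(j)|<2n^4$, the ratio bound is $4n^4$, not $2n^4$; so with $d>2n^5$ you actually obtain $|C_{\mathbf a,\mathbf b}-1|<4n^4/d<2/n$ rather than $1/n$. You flagged this yourself as ``up to a harmless numerical factor'' but then silently dropped the $2$ in the final step. It is indeed harmless downstream (and the paper's own stated constant $2n^4$ has the same factor-of-two looseness), but if you want the literal $1/n$ you should either sharpen the bound on $|E(j)|$ over the actual range $1\le j\le n$ or adjust the threshold on $d$.
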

Let $\mathbf{1}=(1,\ldots,1)$. According to \eqref{finalformulatwo} and Lemma \ref{etalemma} for $d>2n^5$ we have
\begin{multline}\label{infix}
I(n,\delta,d)=d\sum_{\bbb \in \{0,1\}^n,\baa}A^1_{h^{\Sigma \mathbf{b}-\Sigma \mathbf{a}} \bz^{\mathbf{a}-\mathbf{b}}}A^2_{h^{n-\Sigma \mathbf{b}+\Sigma \mathbf{a}}\bz^{\mathbf{b}-\mathbf{a}-\mathbf{1}}}=\\
d^2\sum_{\substack{\bbb \in \{0,1\}^n \\ \Sigma \baa<\Sigma \bbb}}C_{\baa,\bbb}A^1_{(dh)^1h^{\Sigma \mathbf{b}-\Sigma \mathbf{a}-1} \bz^{\mathbf{a}-\mathbf{b}}}A^2_{h^{n-\Sigma \mathbf{b}+\Sigma \mathbf{a}}\bz^{\mathbf{b}-\mathbf{a}-\mathbf{1}}}+d\sum_{\substack{\bbb \in \{0,1\}^n\\ \Sigma \baa=\Sigma \bbb}} A^1_{(dh)^0h^0 \bz^{\mathbf{a}-\mathbf{b}}}A^2_{h^{n}\bz^{\mathbf{b}-\mathbf{a}-\mathbf{1}}}.
\end{multline}
Indeed, the left hand side is by definition the coefficient of $\frac{1}{z_1\ldots z_n}$ following the substitution $h^n=d$. Now $\bbb \in \{0,1\}^n$ means that $z^{\bbb}$ is square-free, which is necessary to get $z_1\ldots z_n$ in the denominator. Note that by definition $A^1_{(dh)^0h^0 \bz^{\mathbf{a}-\mathbf{b}}}=\rho_{\baa-\bbb}$.
Introduce the notation $B(\bz,h)=\prod_{l=1}^n\left(1-\frac{h}{z_l}+\frac{h^2}{z_l^2}-\ldots \right)^{n+2}$
for the second term of $A^2$. For simplicity, we denote by $B_{\bz^\bi}$ the coefficient of $h^{-\Sigma\bi}\bz^\bi$ in $B$. Since $\bbb \subset \{0,1\}^n$ it follows that $\mathbf{1}-\bbb \in \{0,1\}^n$) and the condition $a_ib_i$ ($1\le i \le n$) is equivalent to saying that $a_i >0 \Rightarrow i \in \mathbf{1}-\bbb$. Therefore
\begin{equation}\label{A2}
A^2_{h^{n-\Sigma \mathbf{b}+\Sigma \mathbf{a}}\bz^{\mathbf{b}-\mathbf{a}-\mathbf{1}}}=d^{n-\Sigma \bbb}B_{\bz^{-\baa}}+d^{n-1-\Sigma \bbb}\sum_{s_1 \in \mathbf{1}-\bbb}B_{\bz^{-\baa-e_{s_1}}}+d^{n-2-\Sigma \bbb}\sum_{s_1,s_2 \in \mathbf{1}- \bbb}B_{\bz^{-\baa-e_{s_1}-e_{s_2}}}+ \ldots.
\end{equation}
Substituting \eqref{etatheta} and \eqref{A2} into \eqref{infix} we arrive at the following formula:
\begin{multline}\nonumber
I(n,\delta,d)=-d^2\sum_{\substack{\bbb \in \{0,1\}^n\\ \Sigma \baa<\Sigma \bbb}}C_{\baa,\bbb}\rho_{\mathbf{a}-\mathbf{b}}\frac{1}{2}{n^2-1 \choose \Sigma \bbb-\Sigma \baa-1}(2n^2)^{\Sigma \bbb-\Sigma \baa-1}\cdot \\
\left(d^{n-\Sigma \bbb}B_{\bz^{-\baa}}+d^{n-1-\Sigma \bbb}\sum_{s \in \mathbf{1}-\bbb}B_{\bz^{-\baa-e_s}}+d^{n-2-\Sigma \bbb}\sum_{s_1,s_2 \in \mathbf{1}- \bbb}B_{\bz^{-\baa-e_{s_1}-e_{s_2}}}+ \ldots \right)\\
+d\sum_{\substack{\bbb \in \{0,1\}^n \\ \Sigma \baa=\Sigma \bbb}} \rho_{\baa-\bbb}\left(d^{n-\Sigma \bbb}B_{\bz^{-\baa}}+d^{n-1-\Sigma \bbb}\sum_{s \in \mathbf{1}-\bbb}B_{\bz^{-\baa-e_s}}+d^{n-2-\Sigma \bbb}\sum_{s_1,s_2 \in \mathbf{1}- \bbb}B_{\bz^{-\baa-e_{s_1}-e_{s_2}}}+ \ldots.\right).
\end{multline}
%After rearranging this expression as a polynomial in $d$ we get
%\begin{multline}
%\left(\t(1)\coeff_{1}\Lambda_1-\frac{1}{2}\sum_{s=1}^n C_{1,0,e_s}\t(z_s^{-1})\Lambda_1\right)d^{n+1}+ \\ 
%+ (\t(1)\sum_{s=1}^n \coeff_{z_s^{-1}}\Lambda_1+\sum_{s_1\neq s_2}\t\left(\frac{z_{s_1}}{z_{s_2}}\right)\coeff_{z_{s_1}^{-1}}\Lambda_1
%-\frac{n-1}{2n}\sum_{s_1\neq s_2}\t(z_{s_1}^{-1})\coeff_{z_{s_2}^{-1}}\Lambda_1 -\\ 
%-{n^2-1 \choose 1}2n \sum_{s_1\neq s_2}C_{2,0,e_{s_1}+e_{s_2}}\t((z_{s_1}z_{s_2})^{-1})\coeff_{1}\Lambda_1 -
%\frac{1}{2}\sum_{\substack{s_1,s_2,s_3\\ s_1\neq s_2,s_3}}C_{1,e_{s_1},e_{s_1}+e_{s_2}+e_{s_3}}\t%\left(\frac{z_{s_1}}{z_{s_2}z_{s_3}}\right)\coeff_{z_{s_1}^{-1}}\Lambda_1)d^n+\\
%+(...)d^{n-1}+\ldots 
%\end{multline}
After rearranging this expression as a polynomial of $d$, the coefficient of $d^{n+1-l}$ is
\begin{multline}\label{coefficient}
p_{n+1-l}=\sum_{r=0}^l \sum_{\substack{\bbb \in \{0,1\}^n \\ \Sigma \baa=\Sigma \bbb=r, \baa \bbb=0}} \sum_{\substack{\mathbf{s}\subset \mathbf{1}-\bbb \\ \Sigma \mathbf{s}=l-r}}\rho_{\baa-\bbb}B_{\bz^{-\baa-\mathbf{s}}}- \\
-\sum_{r=1}^{l+1} \sum_{m=1}^r \sum_{\substack{\bbb \in \{0,1\}^n \\ \Sigma \baa=r-m \Sigma \bbb=r, \\ \baa \bbb=0}} \frac{1}{2}{n^2-1 \choose m-1}(2n^2)^{m-1}\sum_{\substack{\mathbf{s}\subset \mathbf{1}-\bbb \\ \Sigma \mathbf{s}=l-r}}C_{\baa,\bbb}\rho_{\baa-\bbb}B_{\bz^{-\baa-\mathbf{s}}}.  
\end{multline}
%\proof By definition
%\begin{equation}\nonumber
%\coeff_{\frac{1}{z_s^{i_s}}} \Lambda_1=\coeff_{\frac{1}{z_s^{i_s}}}\left(1-\frac{h}{z_s}+\frac{h^2}%{z_s^2}-\ldots \right)^{n+2}=(-1)^s \sum_{m_1+\ldots m_{n+2}=s}1=(-1)^s {n+s+1 \choose s}
%\end{equation}
%and the lemma follows. 
%\qed 
\begin{lemma}\label{mainlemma} Conjecture \ref{conj} implies $\sum_{\substack{\bbb \in \{0,1\}^n \\ \Sigma \baa=r-m, \Sigma \bbb=r \\ \baa \bbb=0}}\rho_{\baa-\bbb}<n^{8r-7m}\rho_{\mathbf{0}}$ for $0\le m\le r \le n$.
\end{lemma}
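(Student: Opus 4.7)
My plan is to express each $\rho_\bi$ as a nonnegative linear combination of Thom coefficients $\Tp_\bj$, convert the two parts of Conjecture \ref{conj} into a uniform lower bound on $\rho_{\mathbf{0}}$ and an absolute upper bound on each $\Tp_\bj$, and then assemble the required estimate by an elementary combinatorial count. Substituting the generating function \eqref{tpgenerating} into \eqref{theta} and expanding $(z_1+\ldots+z_n)^{n^2+\Sigma\bi}/(z_1\cdots z_n)^n$ by the multinomial theorem yields
\[
\rho_\bi \;=\; \sum_{\substack{\bj\in\ZZ^n,\ \Sigma\bj=0 \\ \bj\le \bi+\mathbf{n}}} \Tp_\bj \binom{n^2+\Sigma\bi}{(\bi+\mathbf{n}-\bj)_1,\ldots,(\bi+\mathbf{n}-\bj)_n},
\]
which specialises to \eqref{rho0} at $\bi=\mathbf{0}$. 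By Conjecture \ref{conj}(1) every summand is nonnegative; combining with $\Tp_{\mathbf{0}}>0$ (the argument after \eqref{rho0}) gives the anchor $\rho_{\mathbf{0}}\ge \Tp_{\mathbf{0}}\binom{n^2}{n,\ldots,n}$.

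Next I would turn Conjecture \ref{conj}(2) into the global bound $\Tp_\bj \le n^{2|\bj^+|}\,\Tp_{\mathbf{0}}$ for every $\bj$ with $\Tp_\bj>0$: iterating the predecessor relation $|\bj^+|$ times strictly decreases $|\bj^+|$ by one at each step while multiplying $\Tp_\bj$ by less than $k^2=n^2$, and once $|\bj^+|=0$ the constraint $\Sigma\bj=0$ forces $\bj^-=\mathbf{0}$ as well, so $\bj=\mathbf{0}$. Plugging $\bi=\baa-\bbb$ (so $\Sigma\bi=-m$) into the first display and invoking this Thom bound together with the anchor, the lemma reduces to showing
\[
\sum_{(\baa,\bbb)}\ \sum_{\substack{\bj:\Sigma\bj=0\\ \bj\le\baa-\bbb+\mathbf{n}}} n^{2|\bj^+|}\binom{n^2-m}{\baa-\bbb+\mathbf{n}-\bj} \ \le\ n^{8r-7m}\binom{n^2}{n,\ldots,n}.
\]

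The number of admissible $\bbb$ is at most $\binom{n}{r}<n^r$, and for fixed $\bbb$ the number of $\baa\in\ZZ_{\ge 0}^n$ with $\Sigma\baa=r-m$ and $\mathrm{supp}\,\baa\subseteq \mathbf{1}-\bbb$ is at most $n^{r-m}$. For the inner $\bj$-sum I would compare each multinomial to the anchor $\binom{n^2}{n,\ldots,n}$ coordinate by coordinate: shifting the top index from $n^2$ to $n^2-m$ costs a factor of order $n^{-m}$, and each unit of deviation in a coordinate of $\baa-\bbb+\mathbf{n}-\bj$ from $n$ costs a further factor of order $1/n$, so the exponentially growing weight $n^{2|\bj^+|}$ is absorbed into a geometric series whose tail is controlled by the constraints $\Sigma\bj=0$ and $\bj\le\baa-\bbb+\mathbf{n}$. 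The main obstacle is the careful bookkeeping needed to combine these three ingredients into the single exponent $8r-7m$; the constants $8$ and $7$ are not sharp, but they are precisely what is required for the Fujiwara estimate (Lemma \ref{estimation}) in the proof of Theorem \ref{maintwo}.
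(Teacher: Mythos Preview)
Your first two steps are fine: the expansion
\[
\rho_\bi=\sum_{\substack{\bj:\Sigma\bj=0\\ \bj\le\bi+n\mathbf{1}}}\Tp_\bj\binom{n^2+\Sigma\bi}{\bi+n\mathbf{1}-\bj}
\]
is correct, and iterating Conjecture~\ref{conj}(2) does give $\Tp_\bj\le n^{2|\bj^+|}\Tp_{\mathbf{0}}$ whenever $\Tp_\bj>0$. The gap is the anchor $\rho_{\mathbf{0}}\ge\Tp_{\mathbf{0}}\binom{n^2}{n,\ldots,n}$: this throws away every contribution to $\rho_{\mathbf{0}}$ except the central one, and the combinatorial inequality you are then ``reduced to'' is simply false. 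Take $m=0$, $\baa=\bbb=\mathbf{0}$ and the single summand $\bj=(n-n^2,n,\ldots,n)$ (equivalently $\bll=(n^2,0,\ldots,0)$): then $|\bj^+|=n(n-1)$ and $\binom{n^2}{\bll}=1$, so this term alone contributes $n^{2n(n-1)}$, whereas $\binom{n^2}{n,\ldots,n}$ is of order $n^{n^2}$ up to a subexponential factor. For large $n$ the left side already exceeds the right side by a factor $n^{n^2-2n}$, so no polynomial prefactor $n^{8r-7m}$ can save it. Your constraints $\Sigma\bj=0$ and $\bj\le\baa-\bbb+n\mathbf{1}$ only give $|\bj^+|\le n^2+n(r-m)$, which is far too weak to make the $\bj$-sum geometric.

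The paper avoids this by \emph{not} lower-bounding $\rho_{\mathbf{0}}$: it keeps $\rho_{\mathbf{0}}=\sum_{\Sigma\bi=0}\Tp_{-\bi}\coeff_{\bi+n\mathbf{1}}(z_1+\cdots+z_n)^{n^2}$ intact and performs a term-by-term matching. Each summand $\Tp_{-\bi}\cdot\coeff_{\bi+n\mathbf{1}+\baa-\bbb}(z_1+\cdots+z_n)^{n^2-m}$ on the left is compared to a summand $\Tp_{-\bi'}\cdot\coeff_{\bi'+n\mathbf{1}}(z_1+\cdots+z_n)^{n^2}$ on the right, with $\bi'=\bi$ when $\bi+n\mathbf{1}\ge\mathbf{0}$ and $\bi'=\tilde{\bi}$ reached by a short predecessor chain otherwise; the ratio of the two multinomials and the ratio $\Tp_{-\bi}/\Tp_{-\bi'}$ are each bounded by a fixed power of $n$, and Conjecture~\ref{conj}(1) lets you sum the resulting nonnegative terms back into $\rho_{\mathbf{0}}$. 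The crucial point you are missing is that the Thom coefficients appearing on the left are (essentially) the \emph{same} ones appearing in $\rho_{\mathbf{0}}$, so one should compare like with like rather than bound every $\Tp_\bj$ against the single value $\Tp_{\mathbf{0}}$.
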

\proof
By definition we have
\begin{equation}\label{tocontinue}
\sum_{\substack{\bbb \in \{0,1\}^n \\ \Sigma \baa=r-m \Sigma \bbb=r, \\ \baa \bbb=0}}\rho_{\baa-\bbb}=\coeff_{\bz^{\mathbf{0}}}\frac{
 Q_n(\bz)\,\prod_{m<l}(z_m-z_l) (z_1+\ldots +z_n)^{n^2-m}}{
\prod_{m+r\le l \le n}
(z_m+z_r-z_l) (z_1\ldots z_n)^n }\cdot \sum_{\substack{\bbb \in \{0,1\}^n \\ \sum \baa=r-m \Sigma \bbb=r, \\ \baa \bbb=0}}\bz^{\bbb-\baa}.
\end{equation}
For $i_1+\ldots +i_n=n^2$ we have
\begin{equation}\nonumber
\coeff_{\bz^\bi}((z_1+\ldots +z_n)^{n^2-m}\cdot \sum_{\substack{\bbb \in \{0,1\}^n \\ \Sigma \baa=r-m, \Sigma \bbb=r \\ \baa \bbb=0}}\bz^{\bbb-\baa})=\sum_{\substack{\bbb \in \{0,1\}^n \\ \Sigma \baa=r-m, \Sigma \bbb=r \\ \baa \bbb=0, \mathbf{i}+\baa-\bbb \ge 0}}\frac{(n^2-m)!}{\prod_{t=1}^n(i_t+a_t-b_t)!}<
\end{equation}
\begin{equation}\\ \nonumber
<\sum_{\substack{\bbb \in \{0,1\}^n \\ \Sigma \baa=r-m, \Sigma \bbb=r \\ \baa \bbb=0, \mathbf{i}+\baa-\bbb \ge 0}}\sum_{\substack{\bbb' \subset \bbb \\ \Sigma \bbb'=r-m}}\frac{(n^2)!}{\prod_{a_s>0}(i_s+a_s)!\prod_{s\in \bbb'}(i_s-1)!\prod_{s\in [n]\backslash (\baa \cup \bbb')}i_s!}<
\end{equation}
\begin{equation}
<n^m\sum_{\substack{\bbb \in \{0,1\}^n \\ \Sigma \baa=r-m, \Sigma \bbb=r-m \\ \baa \bbb=0, \mathbf{i}+\baa-\bbb \ge 0}}\coeff_{\bz^{\mathbf{i}+\baa-\bbb}}(z_1+\ldots +z_n)^{n^2}.
%=\frac{(n^2)!}{i_1! \cdots i_n!n^2(n^2-1)\cdots (n^2-m+1)}\sum_{\substack{\bbb %\in \{0,1\}^n \\ \Sigma \baa=r-m \Sigma \bbb=r, \\ \baa \bbb=0, \mathbf{i}-\bbb %\ge 0}}\frac{\prod_{s\in \bbb}i_s}{\prod_{a_s>0}(i_s+1)(i_s+2)\cdots %(i_s+a_s)}<\\
%<\frac{(n^2)!}{i_1! \cdots i_n!(n^2)^m}\sum_{\substack{\bbb \in \{0,1\}^n \\ %\Sigma \baa=r-m \Sigma \bbb=r, \\ \baa \bbb=0}}\prod_{s\in %\bbb}i_s<\frac{(n^2)!}{i_1! \cdots i_n!(n^2)^m}\sum_{\substack{\bbb \in %\{0,1\}^n \\ \Sigma \baa=r-m \Sigma \bbb=r, \\ \baa %\bbb=0}}\coeff_{t^r}(1+(i_1+\ldots +i_n)t)^r=\\
%=\frac{(n^2)!}{i_1! \cdots i_n!}n^{2(r-m)}=\coeff_{z_1^{i_1} \cdots %z_n^{i_n}}(z_1+\ldots +z_n)^{n^2}\cdot n^{2(r-m)} 
\end{equation}
Recall from Sect.~\ref{sec:intro} the notation 
\[\mathrm{Tp}_n(\bz)=\frac{
 Q_n(\bz)\,\prod_{m<l\le n}(z_m-z_l) }{
\prod_{m+r\le l \le n}
(z_m+z_r-z_l) }\]
for the Thom series and $\mathrm{Tp}_{\bi}$ for the coefficient of $\bz^{\bi}$ in $\mathrm{Tp}_n$.
For the sake of convenience in he rest of the computation we use the shorthand notation $\coeff_{\bi}$ for $\coeff_{\bz^{\bi}}$ and we do not display the conditions $\bbb \in \{0,1\}^n$ and $\baa \bbb=0$ under the summation signs. By \eqref{tocontinue} and the first part of Conjecture \ref{conj} we get
\begin{multline}\label{long}
|\sum_{\Sigma \baa=r-m, \Sigma \bbb=r}\rho_{\baa-\bbb}|=|\sum_{\Sigma \bi=0}\Tp_{-\bi} \cdot \coeff_{\bi+n\cdot \mathbf{1}}((z_1+\ldots +z_n)^{n^2-m} \cdot \sum_{\Sigma \baa=r-m, \Sigma \bbb=r}\bz^{\bbb-\baa})|<\\
<n^m \sum_{\Sigma \bi=0}
 \Tp_{-\bi} \cdot  \sum_{\substack{\Sigma \baa=\Sigma \bbb=r-m, \\ \mathbf{i}+n\cdot \mathbf{1}+\baa-\bbb \ge 0}}\coeff_{\bi+n\cdot \mathbf{1}+\baa-\bbb}(z_1+\ldots +z_n)^{n^2}=\\
=n^m\sum_{\substack{\Sigma \mathbf{i}=0 \\ \mathbf{i}+n \cdot \mathbf{1} \ge \mathbf{0}}}
\sum_{\substack{\Sigma \baa=\Sigma \bbb=r-m \\ \mathbf{i}+n\cdot \mathbf{1}+ \baa-\bbb \ge \mathbf{0}}} \Tp_{-\bi}\cdot  \coeff_{\bi+n \cdot \mathbf{1}}(z_1+\ldots +z_n)^{n^2}\cdot \frac{\coeff_{\mathbf{i}+n\cdot \mathbf{1}+\baa-\bbb}(z_1+\ldots +z_n)^{n^2}}{\coeff_{\mathbf{i}+n \cdot \mathbf{1}}(z_1+\ldots +z_n)^{n^2}}+\\
+n^m \sum_{\substack{\Sigma \mathbf{i}=0 \\ \exists s,i_s<-n}}
\sum_{\substack{\Sigma \baa=\Sigma \bbb=r-m \\ \mathbf{i}+n \cdot \mathbf{1}+\baa-\bbb \ge \mathbf{0}}} \Tp_{-\bi}\cdot \coeff_{\bi+n \cdot \mathbf{1}+\baa-\bbb}(z_1+\ldots +z_n)^{n^2}.
\end{multline}
To estimate the first sum notice that 
\begin{equation}\label{multinomal}
\frac{\coeff_{\bi+n \cdot \mathbf{1}+ \baa-\bbb}(z_1+\ldots +z_n)^{n^2}}{\coeff_{\bi+n\cdot \mathbf{1}}(z_1+\ldots +z_n)^{n^2}}<n^{2\Sigma \bbb}
\end{equation}
and for given $\bi$ the number of pairs $(\baa, \bbb)$ with the conditions is not more than this number after dropping the positivity condition $\mathbf{i}+n\cdot \mathbf{1}+\baa-\bbb \ge \mathbf{0}$:
\begin{multline} \label{upestimate}
\sharp\{(\baa,\bbb):\bbb \in \{0,1\}^n  \Sigma \baa=\Sigma \bbb=r-m,  \baa \bbb=0\}\le {n \choose \Sigma \bbb}\cdot {n-\Sigma \bbb+\Sigma \baa \choose \Sigma \baa}<n^{2\Sigma \bbb}.
\end{multline}
For the second sum in \eqref{long} we do something similar: for any $\bi$ in the sum with $\Sigma \bi=0$ and at least one coordinate less than $-n$ we find a sequence of vectors $-\bi=\bj_0,\bj_1,\ldots \bj_{r-m}=-\tilde{\bi}$ such that i) $\bj_{s+1}$ is a predecessor of $\bj_s$ such that $\Tp_{\bj_{s+1}}\neq 0$ for $0\le s \le r-m-1$ and ii) $\tilde{\bi}+n \cdot \mathbf{1} \ge \mathbf{0}$. This sequence exists because $\mathbf{i}+n\cdot \mathbf{1}+\baa-\bbb \ge \mathbf{0}$ for some $\baa,\bbb$ with $\Sigma \baa=\Sigma \bbb=r-m$ and by taking repeatedly the predecessors we remove the highest coordinates of $-\bi$ until all coordinates fall below $n$. We can rewrite the corresponding term in \eqref{long} as 
\[\frac{\Tp_{-\bi}}{\Tp_{-\tilde{\bi}}}\cdot \frac{\coeff_{\mathbf{i}+n\cdot \mathbf{1}+\baa-\bbb}(z_1+\ldots +z_n)^{n^2}}{\coeff_{\tilde{\bi}+n \cdot \mathbf{1}}(z_1+\ldots +z_n)^{n^2}}\cdot \Tp_{-\tilde{\bi}}\cdot  \coeff_{\tilde{\bi}+n \cdot \mathbf{1}}(z_1+\ldots +z_n)^{n^2}.\]
By the second part of Conjecture \ref{conj} $\Tp_{\tilde{\bi}} \neq 0$ and $\frac{\Tp_{-\bi}}{\Tp_{-\tilde{\bi}}}<n^{2\Sigma\bbb}$. Moreover, since $\Sigma(\bi+\baa-\bbb-\tilde{\bi})\le 4(r-m)$, we have similarly to \eqref{multinomal} 
\[\frac{\coeff_{\bi+n \cdot \mathbf{1}+ \baa-\bbb}(z_1+\ldots +z_n)^{n^2}}{\coeff_{\tilde{\bi}+n\cdot \mathbf{1}}(z_1+\ldots +z_n)^{n^2}}<n^{4(r-m)}.\]
Finally, similarly to \eqref{upestimate}, the number pairs $(\baa,\bbb)$ in the second sum of \eqref{long} for any $\bi$ is not more than $n^{2(r-m)}$. Putting these together we get
\[\sum_{\substack{\bbb \in \{0,1\}^n \\ \Sigma \baa=r-m, \Sigma \bbb=r \\ \baa \bbb=0}}\rho_{\baa-\bbb}<n^{8r-7m}\sum_{\Sigma \mathbf{i}=0}\Tp_{\bz^{-\mathbf{i}}}\coeff_{\bi+n\cdot \mathbf{1}}(z_1+\ldots +z_n)^{n^2}=n^{8r-7m}\rho_{\mathbf{0}},\]
and Lemma \ref{mainlemma} is proved. 
\qed 

%Using \eqref{etaineqtwo}, the same proof gives us 
%\begin{lemma}\label{mainlemmatwo} Conjecture \ref{conj} implies
%$\sum_{\substack{\bbb \in \{0,1\}^n \\ \Sigma \baa=\Sigma \bbb=r, \\ \baa \bbb=0}}C_{\baa,\bbb}\rho_{\baa-\bbb}<n^{4r}\rho_{\mathbf{0}}$.
%\end{lemma}
An easy computation shows that for $\bi=(i_1,\ldots, i_n) \in \ZZ_{\ge 0}^n$ 
$B_{\bz^\bi}= (-1)^{\Sigma \bi} \prod_{s=1}^n {n+i_s+1 \choose i_s}$. This implies that for $\bi \in (\mathbb{Z}^{\ge 0})^n$ 
$|B_{\bz^\bi}|\le (n+2)^{\Sigma \bi}$.
Substitute this and Lemma \ref{mainlemma} into the expression \eqref{coefficient} for the coefficient of $d^{n+1-l}$. 
The first term in \eqref{coefficient} can be estimated as
\[\sum_{r=0}^l \sum_{\substack{\bbb \in \{0,1\}^n \\ \Sigma \baa=\Sigma \bbb=r, \baa \bbb=0}}\sum_{\substack{\mathbf{s}\subset \mathbf{1}-\bbb \\ \Sigma \mathbf{s}=l-r}}\rho_{\baa-\bbb}B_{\bz^{-\baa-\mathbf{s}}}<
\sum_{r=0}^l {n-r \choose n-l} (n+2)^l n^{8r}\rho_{\mathbf{0}}<\frac{1}{4}n^{10l}\rho_{\mathbf{0}},\]
and the second term as
\begin{multline}\nonumber
\sum_{r=1}^{l+1} \sum_{m=1}^r \sum_{\substack{\bbb \in \{0,1\}^n \\ \Sigma \baa=r-m \Sigma \bbb=r, \\ \baa \bbb=0}} \frac{1}{2}{n^2-1 \choose m-1}(2n^2)^{m-1}\sum_{\substack{\mathbf{s}\subset \mathbf{1}-\bbb \\ \Sigma \mathbf{s}=l-r}}C_{\baa,\bbb}\rho_{\baa-\bbb}B_{\bz^{-\baa-\mathbf{s}}}<\\ \nonumber
<\sum_{r=1}^{l+1} \sum_{m=1}^r \frac{1}{2}{n^2-1 \choose m-1}(2n^2)^{m-1} {n-r \choose n-l} (n+2)^{l-m}n^{8r-7m}\rho_{\mathbf{0}}<\frac{1}{4}n^{10l}\rho_{\mathbf{0}}.
\end{multline}
Recall that with $\delta=\frac{1}{n^2(n+1)}$ the leading coefficient is $p_{n+1}=\frac{1}{2}\rho_{\mathbf{0}}$. Therefore adding these two in \eqref{coefficient} we arrive at
\[|p_{n+1-l}|<n^{10l}p_{n+1}\]
Using Lemma \ref{estimation} this proves Theorem \ref{maintwo}, which together with Theorem \ref{germtoentire} gives Theorem \ref{mainthmtwo}.

\section{On Conjecture \ref{conj}}\label{sec:conjecture}

In this last section we motivate Conjecture \ref{conj} with some observations.
The first part of Conjecture \ref{conj} is the special case of the more general conjecture of Rim\'anyi \cite{rimanyi} saying that the Thom series of any contact singularity class is positive. Morin singularities are probably the most studied contact classes, and further examples are computed in \cite{rf}. Note that Pragacz and Weber in \cite{pragaczweber} prove Shur positvity of Thom polynomials for contact classes, but we need positivity in Chern classes for our argument to work.  

The second part of Conjecture \ref{conj} is based on the observation that in the known examples for Thom polynomials there are no isolated nonzero coefficients in the sense that for any $\bi$ with $\Sigma \bi=0$ and $\Tp_{\bi}>0$ there is a chain $\mathbf{0}=\bi_0,\bi_1,\ldots ,\bi_r=\bi$ of vectors such that $\Tp_{\bi_j}>0$ for $0\le j\le r$ and $\bi_{j}$ is a predecessor of $\bi_{j+1}$ for $0\le j \le r-1$. In short, any nonzero coefficient can be approached by elementary steps from the leading coefficient $\Tp_{\mathbf{0}}$ moving only on positive coefficients. 
\subsection{The convergence of $\Tp_k$}\label{subsec:conv}
The Laurent expansion of $\Tp_k(z_1,\ldots, z_k)$ is convergent when $z_i+z_j<z_l$ holds for $i+j \le l \le k$, as the power series 
\[\frac{1}{z_i+z_j-z_l}=\frac{-1}{z_l}\left(1+\frac{z_i+z_j}{z_l}+
\frac{(z_i+z_j)^2}{z_l^2}+\ldots \right)\]
are convergent. If the positivity conjecture Conjecture \ref{conj} (1) is true, then any subseries is convergent, in particular for any $\bi=(i_1,\ldots, i_k),\Sigma \bi=0$ and $1\le l,m \le k$ the series
\[\sum_{s=0}^\infty \Tp_{\bi+s(e_l-e_m)}\bz^{\bi+s(e_l-e_m)}\]
is convergent with the substitution $z_j=j^2$. That is,
\[\Tp_{\bi}\cdot 1^{2i_1}\cdot 2^{2i_2}\ldots \cdot k^{2i_k}\sum_{s=0}^\infty \frac{\Tp_{\bi+s(e_l-e_m)}}{\Tp_{\bi}}\left(\frac{l}{m}
\right)^{2s}<\infty\]
But $\frac{l}{m}\ge \frac{1}{n}$, so $\sum_{s=0}^\infty \frac{\Tp_{\bi+s(e_l-e_m)}}{\Tp_{\bi}\cdot n^{2s}}<\infty$, suggesting the inequality in the second part of Conjecture \ref{conj}. 
\subsection{Checking the known cases $n=m$, $k \le 8$}\label{subsec:known}	
Table \ref{knownthom} lists the known Thom polynomials $\Tp_k^{0}(c_1,c_2,\ldots)$ form \cite{rimanyi} (Theorem 5.1).
\begin{table}
\caption{Thom polynomials $\Tp_k^{0}$ for $k \le 8$.} 
\label{knownthom}
\small
\begin{tabular}{|l|}
\hline
$\Tp_1^0=c_1$ \\
\hline
$\Tp_2^0=c_1^2+c_2$ \\
\hline
$\Tp_3^0=c_1^3+3c_1c_2+2c_3$\\
\hline
$\Tp_4^0=c_1^4+6c_1^2c_2+2c_2^2+9c_1c_3+6c_4$\\
\hline
$\Tp_5^0=c_1^5+10c_1^3c_2+25c_1^2c_3+10c_1c_2^2+38c_1c_4+12c_2c_3+24c_5$\\
\hline
$\Tp_6^0=c_1^6+15c_1^4c_2+55c_1^3c_3+30c_1^2c_2^2+141c_1^2c_4+79c_1c_2c_3+
5c_2^3+202c_1c_5+55c_2c_4$\\
$\ \ \ \ \ \ \ \ \ +17c_3^2+120c_6$\\
\hline
$\Tp_7^0=c_1^7+21c_1^5c_2+105c_1^4c_3+70c_1^3c_2^2+399c_1^3c_4+
301c_1^2c_2c_3+35c_1c_2^3+960c_1^2c_5+$\\
$\ \ \ \ \ \ \ \ \ +467c_1c_2c_4+139c_1c_3^2+58c_2^2c_3+1284c_1c_6+326c_2c_5+
154c_3c_4+720c_7$\\
\hline
$\Tp_8^0=c_1^8+28c_1^6c_2+140c_1^4c_2^2+140c_1^2c_2^3+14c_2^4+182c_1^5c_3+868c_1^3c_2c_3+501c_1c_2^2c_3+$\\
$\ \ \ \ \ \ \ \ \ +642c_1^2c_3^2+202c_2c_3^2+952c_1^4c_4+2229c_1^2c_2c_4+
364c_2^2c_4+1559c_1c_3c_4+332c_4^2+$\\
$\ \ \ \ \ \ \ \ \ +3383c_1^3c_5+3455c_1c_2c_5+954c_3c_5+7552c_1^2c_6+
2314c_2c_6+9468c_1c_7+5040c_8$\\
\hline
\end{tabular}\\
\end{table}
All coefficients are positive in the table, suggesting Conjecture \ref{conj} (1). From the residue formula \eqref{thompolynomial} we get that for $m=n$ and $1\le i_1 \le i_2 \le \ldots \le i_k \le k, i_1+\ldots +i_k=k$
\begin{equation*}
\coeff_{c_{i_1}c_{i_2}\ldots c_{i_k}}\Tp_k^0=\sum_{\s \in \mathcal{S}_{s \to k}} \Tp_{i_1e_{\s(1)}+\ldots +i_ke_{\s(k)}-\mathbf{1}}
\end{equation*}
where $\mathcal{S}_{s \to k}$ is the set of injective maps $\{i_1,\ldots, i_k\} \to \{1,\ldots, k\}$. 

In particular,
$\coeff_{c_1^k}\Tp_k^0=\Tp_{\mathbf{0}}$, and if the positivity conjecture Conjecture \ref{conj} (1) holds then for $\bj=\bi-\mathbf{1}$ we have $\Sigma \bj=0$ and 
\[\frac{\Tp_{\bj}}{\Tp_\mathbf{0}}<\frac{\coeff_{c_{j_1+1}c_{j_2+1}\ldots c_{j_k+1}}\Tp_k^0}
{\coeff_{c_1^k}\Tp_k^0}.\]
If $\bj=\bj^+-\bj^-$ is the difference of positive vectors $\bj^+,\bj^- \in \ZZ_{\ge 0}^n$ then the right hand side is less then $k^{2\Sigma \bj^+}$ in the listed cases supporting the inequality in Conjecture \ref{conj} (2).
\subsection{Checking $k=3$ for any codimension $m-n$.}\label{subsec:check3} 
Since $Q_3(z_1,z_2,z_3)=1$, the Thom series for $k=3$ is given as
\begin{equation}\nonumber
\Tp_3(z_1,z_2,z_3)=\frac{(z_1-z_2)
(z_1-z_3)(z_2-z_3)}{(2z_1-z_2)(z_1+z_2-z_3)(2z_1-z_3)}
\end{equation}
We observe that the expansion of a fraction of the
form $(1-f)/(1-(f+g))$ with $f$ and $g$ small has positive
coefficients. Indeed, this follows from the identity
\[   \frac{1-f}{1-f-g}=1+\frac{g}{1-f-g}.
\]
Now, introducing the variables $a=z_1/z_2$ and $b=z_2/z_3$, we can
rewrite $\Tp_3$ as follows:
\[\frac{(z_1-z_2)
(z_1-z_3)(z_2-z_3)}{(2z_1-z_2)(z_1+z_2-z_3)(2z_1-z_3)}
=\frac{1-a}{1-2a}\cdot\frac{1-ab}{1-2ab}\cdot\frac{1-b}{1-b-ab}.
\]
Applying the above identity to the right hand side of this formula
immediately implies the first part of Conjecture \ref{conj} for $k=3$.

We leave as an exercise to the reader to show that in this case $\frac{\Tp_{\bi+e_l-e_m}}{\Tp_{\bi}}<9$ holds for any $\bi$ with $\Sigma \bi=0$ and $1\le l<m\le 3$ and that the connectivity of positive coefficients holds.

\end{document}